\newcommand{\gpr}[2]{ \left\langle #1 \mid #2 \right\rangle}
\newcommand{\gp}[1]{ \left\langle #1 \right\rangle}
\newcommand{\ap}[0]{ \approx}
\newcommand{\gpn}[1]{ \left\langle #1 \right\rangle _F^\#}
\newtheorem{lem}{Lemma}
\newtheorem*{thm}{Theorem}
\newtheorem*{defn}{Definition}
\newcommand{\field}[1]{\mathbb{#1}}
\author{Mustafa G\"okhan Benli}
\title{Profinite completion of Grigorchuk's group is not finitely presented}
\begin{document}
\maketitle

\begin{abstract}
In this paper we prove that the profinite completion $\mathcal{\hat G}$ of the Grigorchuk group $\mathcal{G}$ is not finitely presented
as a profinite group. We obtain this result by  showing that $H^2( \mathcal{\hat G},\field{F}_2)$ is infinite dimensional. Also
several results are proven about the finite quotients  $\mathcal{G}/ St_{\mathcal{G}}(n)$ including minimal presentations
and Schur Multipliers.
\end{abstract}

In 1980's R.Grigorchuk constructed groups of automorphisms of rooted trees having extraordinary properties. 
One prototypical example (usually referred as the
\textit{first Grigorchuk Group} and will be  denoted by $\mathcal{G}$ throughout the paper) was the object of study of several researchers in the last 30 years. It
has many interesting properties some of which can be summarized as follows: It is an  example of
a Burnside group i.e. a finitely generated infinite periodic group. It's growth function has intermediate growth
rate and it is the first example of such a group. It is an amenable group which is not elementary amenable. It
is not finitely presented but has a special kind of recursive presentation. It is a just infinite branch group i.e. an 
infinite group whose nontrivial quotients are all finite and its lattice of normal subgroups resembles 
the binary rooted tree.  We refer to  \cite{MR764305},
\cite{MR1786869}, \cite{MR2195454}, \cite{MR2035113}
for generalities about the Grigorchuk group and related topics.

As $\mathcal{G}$ is an interesting group from the point view of discrete groups, its profinite completion $\mathcal{\hat G}$ has
several interesting properties in the class of profinite groups. First of all it coincides with the closure of 
$\mathcal{G}$ in the full automorphism group of the binary rooted tree. It is a just-infinite profinite branch group.
It has finite width (i.e. the lower central factors have bounded rank). Also it has an interesting universal property that
it contains a copy of every countably based pro-2 group. In \cite{MR1776763} it was shown that $\mathcal{\hat G}$
is a  counterexample to a conjecture about just-infinite pro-p groups of finite width.

\hfill

Our main result is about finite presentability of 
$\mathcal{\hat G}$ as a profinite group:

\hfill

\begin{thm}
       
The profinite completion $\mathcal{\hat G}$ of $\mathcal{G}$  is a 3 generated pro-2 group and is not finitely 
presented as a profinite group. 

\end{thm}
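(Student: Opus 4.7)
The plan splits into two parts: the generation claim and the non-finite-presentability. For generation, $\mathcal{G}$ is generated by the involutions $a,b,c,d$ subject to $bcd=1$, so it is abstractly 3-generated, and since every finite quotient of $\mathcal{G}$ is a 2-group, $\hat{\mathcal{G}}$ is a pro-2 group topologically generated by the images of $a, b, c$. For non-presentability, I would invoke the standard Serre-type criterion that a finitely generated pro-$p$ group $H$ is finitely presented as a profinite group if and only if $\dim_{\field{F}_p} H^2(H, \field{F}_p) < \infty$ in continuous cohomology. Thus the task reduces to showing $\dim_{\field{F}_2} H^2(\hat{\mathcal{G}}, \field{F}_2) = \infty$.

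To compute this, I would use the congruence subgroup property of $\mathcal{G}$ — every finite-index subgroup contains some level stabilizer $St_{\mathcal{G}}(n)$ — so the finite quotients $G_n := \mathcal{G}/St_{\mathcal{G}}(n)$ are cofinal in the profinite topology and $\hat{\mathcal{G}} = \varprojlim_n G_n$. Continuity of cohomology then gives
$$H^2(\hat{\mathcal{G}}, \field{F}_2) \cong \varinjlim_n H^2(G_n, \field{F}_2).$$
By the universal coefficient theorem, for each finite 2-group $G_n$ we have $\dim H^2(G_n, \field{F}_2) = d(G_n^{\mathrm{ab}}) + d(M(G_n))$, where $M(G_n)$ is the Schur multiplier; since $G_n^{\mathrm{ab}}$ is an elementary abelian 2-group of rank at most 3 for all $n$, any growth must come from the $M(G_n)$. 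I would then produce lower bounds on $d(M(G_n))$ using the branching embedding $\psi: St_{\mathcal{G}}(1) \hookrightarrow \mathcal{G}\times\mathcal{G}$, which descends to a map from $G_{n+1}$ into (a subgroup of) $G_n \times G_n$ and lets one analyze minimal pro-2 presentations recursively. The Lysenok-type relators, obtained by iterating the substitution endomorphism on the initial defining relations such as $(ad)^4$ and $(adacac)^4$, should supply a new relator at each level, forcing $\dim H^2(G_n, \field{F}_2)$ to grow without bound.

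The main obstacle, in my view, is not the growth of the individual dimensions but controlling the inflation maps $H^2(G_n, \field{F}_2) \to H^2(G_{n+k}, \field{F}_2)$: these could have large kernels, and dimensions could in principle collapse in the direct limit. To rule this out, I would match each newly appearing generator of $M(G_n)$ with an explicit 2-cocycle whose inflations to all higher $G_{n+k}$ remain nontrivial, using the branching map to verify that a relator introduced at level $n$ is genuinely not a consequence, modulo commutators and squares, of relators appearing at earlier levels. Combining growth of Schur multipliers with non-collapse in the direct limit yields $\dim_{\field{F}_2} H^2(\hat{\mathcal{G}}, \field{F}_2) = \infty$, completing the proof.
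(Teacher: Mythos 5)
Your overall architecture coincides with the paper's: reduce to showing $\dim H^2(\hat{\mathcal{G}},\field{F}_2)=\infty$ via the pro-$p$ presentation criterion, use the congruence property to write $H^2(\hat{\mathcal{G}},\field{F}_2)\cong\varinjlim H^2(\mathcal{G}_n,\field{F}_2)$, and feed the limit with the growth of the Schur multipliers $M(\mathcal{G}_n)$. Two points, however, are left as plans precisely where the real work lies. First, the growth of $d(M(\mathcal{G}_n))$ is not a soft consequence of ``a new Lysenok relator at each level'': the paper has to produce actual finite presentations of $\mathcal{G}_n$ (Theorem 1, proved by induction on the lifted stabilizers $\Upsilon_n$ using the branch map) and then run a lengthy commutator computation in $F'/([K_n,F]\gamma_5(F)F^{(2)})$ to certify that the relators contribute independent classes, yielding $M(\mathcal{G}_n)\cong C_2^{2n-2}$. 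Extra relators in a presentation give only an upper bound on the multiplier; the lower bound is the hard direction, and your sketch does not indicate how you would obtain it.

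Second, and more importantly, your plan for the non-collapse step is not the paper's and is harder than necessary. You propose to exhibit, for each new generator of $M(\mathcal{G}_n)$, an explicit $2$-cocycle whose inflation to every $\mathcal{G}_{n+k}$ stays nontrivial; nothing in your sketch shows how to verify this for all $k$ simultaneously, and it is not needed. The paper instead bounds the kernels uniformly: the five-term exact sequence for $1\to Ker(q_{n,k})\to\mathcal{G}_{n+k}\to\mathcal{G}_n\to 1$ identifies $\ker(q_{n,k}^*)$ with the image of the transgression from $Hom(Ker(q_{n,k}),\field{F}_2)^{\mathcal{G}_n}$, which is injective because $\mathcal{G}_n$ and $\mathcal{G}_{n+k}$ have the same abelianization; the branch structure then gives $Ker(q_{n,k})\cong St_{\mathcal{G}}(3)/St_{\mathcal{G}}(k+3)\times\cdots\times St_{\mathcal{G}}(3)/St_{\mathcal{G}}(k+3)$ with $\mathcal{G}_n$ permuting the factors transitively, so an invariant homomorphism is determined by its values on one factor and the kernel dimension is at most $d(St_{\mathcal{G}}(3))$, independently of $n$ and $k$. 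A short counting argument then shows that a direct limit of elementary abelian $2$-groups of unbounded rank with uniformly bounded kernels is infinite. You correctly identified the collapse issue as the main obstacle, but as written your proposed resolution of it is a genuine gap; I would replace the cocycle-tracking plan with this uniform bound.
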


The main theorem is proved by showing that the cohomology group  $H^2( \mathcal{\hat G},\field{F}_2)$ is 
infinite dimensional. Naturally, to achieve this goal  we prove various intermediate results related to finite quotients of 
$\mathcal{G}$.
A step-by-step scheme can be summarized as follows:

\begin{enumerate}
 \item Finding presentations for the finite quotients $\mathcal{G}_n=\mathcal{G} / St_{\mathcal{G}}(n)$ (Theorem 1).
 \item Using these presentations to compute Schur Multiplier $H_2(\mathcal{G}_n,\field{Z})=(C_2)^{2n-2}$ and 
$H^2(\mathcal{G}_n,\field{F}_2)=(C_2)^{2n+1}$ (Theorem 2).
\item Using theorem 2 and the  fact that $\mathcal{G}$ is a regular branch group  showing that
$H^2(\mathcal{\hat G}, \field{F}_2)$ is infinite dimensional (Theorem 5).
\end{enumerate}

Also as byproduct, in theorems 3 and  4, we show that relators of the presentations from Theorem 1 are independent and 
find minimal presentations for the finite quotients $\mathcal{G}_n$. \\

The paper is organized as follows:\hfill

In section 1 we give  basic definitions and properties of self-similar groups and specifically of $\mathcal{G}$.
Section 2 is  devoted to the discussion of main results. 
Last section contains the proofs together with intermediate lemmas.\hfill

\hfill

\textbf{Notation:}\\

 $[g,h]$ will always denote the element $g^{-1}h^{-1}gh$. $h^g$ is used for the conjugate $g^{-1}hg$.
 If $G$ is a group and $S$ is a subset, $\gp{S}_G$ denotes the subgroup generated by $S$ and
$\gp{S}_G^\#$ denotes the normal subgroup generated by $S$. $C_n$ denotes the cyclic group of order $n$ and $\field{F}_p$ denotes the finite field of order $p$.
 $G^n$ and $G^\infty$ denote the $n$ fold and infinite direct product of $G$ with itself respectively. $\hat G$
and $\hat G_p$ denote profinite and pro-p completions respectively.

\section{Preliminaries}

\subsection*{Automorphisms of rooted trees}

Let $X=\{0,\ldots,d-1\}$ be an alphabet of $d$ elements. Then the set $X^*$ of finite sequences over $X$ has 
the structure of a regular $d$-ary rooted tree. At the root one has the empty string denoted by $\varnothing$, and each
word $w \in X^*$ has $d$ children $\{wx \mid x \in X\}$. We will make no distinction between the set $X^*$ and the 
tree it describes. The ambient object is the group of graph automorphisms 
of $X^*$ denoted by $Aut(X^*)$. These are bijections on $X^*$ which preserve incidence of vertices (i.e. 
prefixes in $X^*$). By the level of a vertex $w$ we mean its length as a word (equivalently its distance to
the root ), we denote the level by $|w|$. It is easy to see that any such automorphism fixes the root and permutes vertices in the same 
level.

\begin{defn}
 Given a subgroup $G$ of $Aut(X^*)$, for each $n\geq 1$ we have a normal subgroup 
$$St_G(n)=\{g\in G \mid g(w)=w \quad \text{for all} \quad w \in X^* \text{ with } |w|=n\} $$

called the n-th level stabilizer of $G$.
\end{defn}

Given a group $G$ of tree automorphisms we will denote $G/St_G(n)$ by $G_n$.\hfill

\hfill

Since each level of the tree has finitely many elements and automorphisms do not change 
levels, it follows that the index $[G:St_G(n) ] $ is always finite. Also since $\cap St_G(n)$
is trivial, any subgroup of $Aut(X^*)$ is residually finite.\hfill

\hfill

The d-ary rooted tree $X^*$ is a self-similar geometric object. The subtree $wX^*$ hanging down
at a vertex $w \in X^*$ (i.e. words starting with $w$) is canonically isomorphic to the whole tree $X^*$
via the isomorphism
$$
\begin{array}{cccl}
 \phi_w :& X^* & \longrightarrow & wX^* \\ 
         & v   & \mapsto         & wv  
\end{array}
$$

This self-similarity also reflects upon the automorphism group:\\
Given an automorphism $f \in Aut(X^*)$ and a vertex $v\in X^*$ we have another automorphism denoted by $f_v$
(called \textit{the section of $f$ at $v$}) which is uniquely determined by the equation 

$$f(vw)=f(v)f_v(w) \quad \text{for all} \quad   v,w \in X^*$$

(Or equivalently $f_v(w)=\phi^{-1}_ {f(v)}f(\phi_v(w))$).

\begin{defn} A subgroup $G$ of $Aut(X^*)$ is called self-similar if it is closed under taking sections of
its elements i.e. for all $f \in G$ and for all $v\in X^*$ we have $f_v  \in G$.
\end{defn}

If $G$ is self-similar  we have an embedding of $G$ into the semi-direct product

$$ 
\begin{array}{ccccc}
G & \longrightarrow & (G \times \ldots \times G) & \rtimes &S_{d} \\
f & \mapsto         & (f_0,\ldots,f_{d-1}) & &\sigma_f 
\end{array}
$$

where $S_d$ is the symmetric group on $d$ letters and $\sigma_f \in S_d$ is the permutation determined by $f$ on 
the first level of the tree. $f_0,\ldots,f_{d-1}$ determine how $f$ acts on the first level subtrees and
$\sigma_f$ determines how these subtrees are permuted. This semi-direct product is called a 
\textit{permutational wreath product} and is usually denoted by $G \wr S_d$.\hfill

\hfill

An easy way to create a self-similar group is to start with a set of symbols $\{f_0,\dots,f_k\}$
and look at the system

$$
\begin{array}{cc}
 f_0=(f_{00},\ldots,f_{0d-1}) & \sigma_0 \\
 f_1=(f_{10},\ldots,f_{1d-1}) & \sigma_1 \\
 \ldots                       & \dots   \\
f_k=(f_{k0},\ldots,f_{kd-1}) & \sigma_k \\
\end{array}
$$

where $f_{ij}\in \{f_0,\dots,f_k\}$ and $\sigma_i \in S_d$.\hfill

\hfill

Such a system (usually referred to as a \textit{wreath recursion}) defines a unique set of automorphisms of $Aut(X^*)$ denoted by $f_0,\dots,f_k$. Then 
we have the subgroup $G=<f_1,\ldots,f_k>_{Aut(X^*)}$ which is obviously a self-similar group since each 
section of each generator is a again a generator. This construction is a source of many interesting 
groups and most of the well studied self-similar groups are defined by a wreath recursion.

In fact a self-similar group defined in this way belongs to a smaller class of self-similar groups 
called the \textit{groups generated by finite automata}. For more on groups generated by automata 
see \cite{MR1841755}.\hfill

\hfill

If $G$ is a self-similar group acting on the d-ary tree,  we have a monomorphism 

$$
\begin{array}{cccc}
 \varphi: & St_G(1) & \longrightarrow & G \times \ldots \times G \\
          &   g       &  \mapsto        & (g_0, \ldots  ,g_{d-1}) 
\end{array}
$$

\begin{defn}
Let $G$ be a level-transitive self-similar group (i.e. it acts transitively on the levels of the tree).
 $G$ is called regular branch over a finite index subgroup $K$ if
$$K \times \ldots \times K \leq \varphi(K) $$

\end{defn}

\subsection*{The Grigorchuk Group}
\begin{defn} Let $X=\{0,1\}$ so that $X^*$ is the binary rooted tree. Consider the following automorphisms
given by the wreath recursion:

$$
\begin{array}{ccc}
a= & (1,1) & \sigma \\
b= & (a,c) &         \\
c= & (a,d) &         \\
d= & (1,b) &         \\
\end{array}
$$

The subgroup $\mathcal{G}$ they generate is called the Grigorchuk group. ($\sigma$ denotes the nontrivial element in $S_2$).
\end{defn}

We will list some well known properties of $\mathcal{G}$ which will be used throughout the paper (For proofs see \cite{MR1786869},\cite{MR2035113}):

\begin{itemize}
 \item $\mathcal{G}$ is level transitive.
 \item $St_\mathcal{G}(1)=\gp{b,c,d,b^a,c^a,d^a}_{\mathcal{G}}$ and we have a monomorphism
$$
\begin{array}{cccc}
 \varphi:& St_\mathcal{G}(1) & \longrightarrow & \mathcal{G}\times \mathcal{G}\\
& b & \mapsto         & (a,c)\\
& c & \mapsto         & (a,d)\\
& d & \mapsto         & (1,b)\\
& b^a & \mapsto         & (c,a)\\
& c^a& \mapsto         & (d,a)\\
& d^a & \mapsto         & (b,1)\\
\end{array}
 $$
\item In $\mathcal{G}$ the relations $a^2=b^2=c^2=d^2=bcd=(ad)^4=1$ hold.
\item $\mathcal{G}$ is an infinite $2$ group.
\item $\mathcal{G}$ is a regular branch group over the subgroup  $K=\gp{(ab)^2}_\mathcal{G}^\#$.
\item $\mathcal{G}$ is just-infinite.
\item $St_\mathcal{G}(3)\leq K$ .
\item $\mathcal{G}_1\cong C_2$, $\mathcal{G}_2\cong C_2 \wr C_2$, $\mathcal{G}_3\cong C_2\wr C_2 \wr C_2$ and $|\mathcal{G}_n|=2^{5. 2^{n-3}+2}$ for $n\geq 3$.
\item For $n\geq4$ the kernels of the quotient maps $q_n : \mathcal{G}_n\longrightarrow \mathcal{G}_{n-1}$ are elementary 
abelian $2$-groups and satisfy

$$|Ker(q_n)|=2^{5.2^{n-4}} $$ 
\end{itemize}

\section{Main Theorems}

\subsection*{Presentations, Schur Multipliers and independence of relators}
Immediately after discovering his group, Grigorchuk proved that it is not finitely presented.
In 1985 a recursive presentation was found by I.Lysenok. He proved that $\mathcal{G}$ has
the following special recursive presentation:

\begin{thm}{(Lysenok \cite{MR819415})} Grigorchuk group has the presentation
$$\gp{a,b,c,d \mid a^2,b^2,c^2,d^2,bcd,\sigma^i((ad)^4),\sigma^i((adacac)^4),i\geq0}, $$

where $\sigma$ is the substitution
$$
\sigma = \left\{
\begin{array}{ccc}
   a & \mapsto & aca \\
   b & \mapsto  & d \\
   c & \mapsto & b \\
    d & \mapsto & c
  \end{array}
\right.$$
\end{thm}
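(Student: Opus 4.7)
Let $L$ denote the group defined by the presentation and $\pi : L \to \mathcal{G}$ the canonical surjection induced by the identity on generators; the plan is to verify $\pi$ is well-defined and then to establish injectivity via self-similarity.

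The first step is to check every listed relator is trivial in $\mathcal{G}$. The involutions are immediate from the wreath recursion, and $bcd = 1$ follows from $\varphi(bcd) = (1, cdb)$ combined with an obvious level induction. The seed relations $(ad)^4$ and $(adacac)^4$ are handled by unwinding $\varphi$; for example, $\varphi((ad)^2) = (b,b)$, whence $(ad)^4 = 1$ since $b^2 = 1$. For the $\sigma^i$-families, the central identity is that $\sigma(w) \in St_\mathcal{G}(1)$ for every word $w$ with $\varphi(\sigma(w)) = (\beta(w), w)$, where $\beta$ is the substitution $a \mapsto d,\ b \mapsto 1,\ c \mapsto a,\ d \mapsto a$. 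Since $\beta$ sends each seed relator to the identity of $\mathcal{G}$ (directly verified), a joint induction on $i$ yields $\sigma^i(r) = 1$ in $\mathcal{G}$ for every seed $r$.

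The bulk of the work is injectivity. The plan is to prove by induction on $n \geq 1$ that $\pi$ induces an isomorphism $L / \pi^{-1}(St_\mathcal{G}(n)) \to \mathcal{G}_n$; combined with $\bigcap_n St_\mathcal{G}(n) = 1$ this forces $\ker \pi = 1$. The case $n = 1$ reduces to the abelianization $L \to C_2$ sending $a \mapsto 1$ and $b, c, d \mapsto 0$. The inductive step requires constructing a wreath-recursive structure on $L$ itself: a homomorphism $\Psi : H \to L \times L$, where $H$ is the index-$2$ subgroup of $L$ generated (via Reidemeister--Schreier with transversal $\{1, a\}$) by $b, c, d, b^a, c^a, d^a$, and $\Psi$ is specified on these generators by the same formulas that define $\varphi$. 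Granted $\Psi$, any element of $\ker \pi$ lying in $H$ has $\Psi$-image inside $\ker \pi \times \ker \pi$ at strictly lower tree level, closing the induction.

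The main obstacle is the well-definedness of $\Psi$: each Reidemeister--Schreier relation of $H$ must, when substituted into the $\varphi$-formulas, lie in the normal closure of the defining relators of $L \times L$. This is precisely where the $\sigma$-shifted relators earn their keep, since the $\Psi$-image of a $\sigma^i$-relator is, modulo relators already known, a pair involving only $\sigma^{i-1}$-relators, so the verification bootstraps by induction on $i$. Once $\Psi$ is in place, the level-by-level match with $\mathcal{G}_n$ follows formally, yielding $L \cong \mathcal{G}$ and Lysenok's presentation.
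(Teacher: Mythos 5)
This statement is Lysenok's theorem, which the paper cites from \cite{MR819415} and does not prove; so your proposal can only be judged on its own merits, not against an argument in the text. The easy direction (all listed relators die in $\mathcal{G}$) is essentially right, and your identity $\varphi(\sigma(w))=(\beta(w),w)$ is the same mechanism the paper uses for $\bar\varphi\circ\sigma$ in Section 3.1; note only that the induction needs $\beta(\sigma^{i-1}(r))=1$ for all $i$, not just $\beta(r)=1$, which is cleanest to get from the observation that the first coordinate always lands in $\gp{a,d}\cong D_8$, a group of exponent $4$, while all iterated relators are fourth powers.

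The injectivity half has a genuine logical gap. Your inductive claim --- that $\pi$ induces an isomorphism $L/\pi^{-1}(St_{\mathcal{G}}(n))\to\mathcal{G}_n$ --- is true for free by the correspondence theorem, since $\pi$ is surjective; it carries no information. And $\bigcap_n \pi^{-1}(St_{\mathcal{G}}(n))=\pi^{-1}\bigl(\bigcap_n St_{\mathcal{G}}(n)\bigr)=\ker\pi$, so the intersection argument concludes exactly nothing: you would need to show $\bigcap_n \pi^{-1}(St_{\mathcal{G}}(n))$ is trivial \emph{in $L$}, which is equivalent to what you are trying to prove. The descent step ``an element of $\ker\pi\cap H$ has $\Psi$-image in $\ker\pi\times\ker\pi$ at strictly lower tree level'' is not well-founded either --- elements of $\ker\pi$ do not carry a tree level, so there is no decreasing parameter. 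To make this strategy work you need two missing ingredients: (i) a genuine induction parameter, namely the contraction property (the sections $g_0,g_1$ of a word $g$ have length at most about half of $|g|$), together with an explicit finite base case verifying that all short elements of $\ker(F\to\mathcal{G})$ are consequences of the relators; and (ii) the fact that the kernel of the lifted map $\bar\varphi$ on $\Gamma=\gpr{a,b,c,d}{a^2,b^2,c^2,d^2,bcd,(ad)^4}$ is exactly the normal closure of $u_1=\sigma((ad)^4)$ and $v_0=(adacac)^4$ --- equivalently, that $\Psi$ is injective on $L$. Item (ii) is the computational heart of Lysenok's and Grigorchuk's proofs (it is Lemma 11 of \cite{MR1676626}, restated as Lemma 16 in this paper) and cannot be waved through as ``once $\Psi$ is in place, the level-by-level match follows formally.'' Without it, $\Psi$ could have a nontrivial kernel and the descent would never force an element of $\ker\pi$ to be trivial in $L$.
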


In \cite{MR1676626} Grigorchuk gave a systematic way of finding similar presentations in the 
general case and suggested the name \textit{L-presentations}. This scheme was later used in
\cite{MR2352271},\cite{MR1902367} to find similar presentations for other self similar groups including
iterated monodromy groups.  Also in \cite{MR2009317} Bartholdi showed that every contracting self-similar branch group has 
such a presentation and used his theorem to find presentations for many other well 
known self-similar groups.\hfill

Roughly, a finite $L$-presentation is a generalization of a finite presentation in which one can 
obtain all relations by applying finitely many free group endomorphisms to finitely many initial 
relators. A precise definition is as follows:

\begin{defn}
  An L-presentation (or endomorphic presentation) is an expression \hfill
$$ \gp{X\mid Q\mid R \mid \Phi}$$

where $X$ is a set, $Q,  R $ are subsets of the free group $F(X)$ on the set $X$ and $\Phi$ is a 
set of free group endomorphisms. \hfill

\hfill

It defines the  group

$$G=F(X)/N $$ where 
$$N=\gp{Q, \bigcup_{\phi \in \Phi^*}\phi(R)}_F^\# $$

It is called a finite L-presentation if $X,Q,R,\Phi$ are all finite.
\end{defn}

For more on $L$-presentations see \cite{MR2009317}.\hfill

\hfill

Lysenok's presentation was later used by Grigorchuk \cite{MR1616436} to embed $\mathcal{G}$ into a finitely 
presented amenable group $\tilde{\mathcal{G}}$ which is an HNN-extension of $\mathcal{G}$. Since  $\tilde{\mathcal{G}}$ 
contains $\mathcal{G}$ it is not elementary amenable and hence amenable and elementary amenable groups
do not coincide even in the class of finitely presented groups.\hfill

\hfill

Our first theorem gives  similar presentations for the finite quotients $\mathcal{G}_n$ and will be proved 
in section $3.1$.\hfill

\hfill

\textbf{Theorem 1}\textit{ For $n \geq 3 $ we have 
$$\mathcal{G}_n=\gpr{a,b,c,d}{a^2,b^2,c^2,d^2,bcd,u_0,\ldots,u_{n-3},v_0,\ldots,v_{n-4},w_n,t_n}$$}
\textit{where 
$$u_i=\sigma^i((ad)^4),\;v_i=\sigma^i((adacac)^4),\;w_n=\sigma^{n-3}((ac)^4),\; t_n=\sigma
^{n-3}((abac)^4)$$}
\textit{and $\sigma$ is the substitution given by }

$$
\sigma = \left\{
\begin{array}{ccc}
   a & \mapsto & aca \\
   b & \mapsto  & d \\
   c & \mapsto & b \\
    d & \mapsto & c
  \end{array}
\right.$$

As it can bee seen readily, as $n$ grows these presentations approach to the Lysenok's presentation.\hfill

\hfill

 Recall that given a group $G$, the \textit{Schur Multiplier} of $G$ (denoted by $M(G)$) is the second integral homology group 
$H_2(G,\field{Z})$. For finite groups we have the isomorphism 
$H_2(G,\field{Z})\cong H^2(G,\field{C}^*)$. If $G$ is given by a presentation $F/R\cong G$ 
where $F$ is a free group, the Hopf's formula (obtained first by Schur for finite groups and generalized to infinite
groups by Hopf) gives 

$$M(G)\cong R\cap F' / [R,F] $$

Hence the abelian group $ R\cap F' / [R,F]$ is independent of the presentation of the group. If the given
presentation is finite (i.e. $F$ has finite rank and $R$ is the normal closure of finitely many elements 
$\{r_1,\ldots,r_m\}$ in $F$), then it is easy to see that the abelian group $R/[R,F]$ is generated by the images 
of $\{r_1,\ldots,r_m\}$ and hence its subgroup $R\cap F' / [R,F]$ is a finitely generated abelian group. Therefore
the Schur multiplier of a finitely presented group is necessarily finitely generated. The converse of this is 
not true. Baumslag in \cite{MR0297845}  gave an example of a non finitely presented group with trivial multiplicator.
For generalities about Schur multipliers of groups see \cite{MR1200015}.\hfill

\hfill

The computation of the Schur multiplier of $\mathcal{G}$ was done by Grigorchuk:

\begin{thm}{(Grigorchuk, \cite{MR1676626})} $ M(\mathcal{G}) \cong (C_2)^\infty$
\end{thm}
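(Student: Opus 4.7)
The plan is to apply Hopf's formula to Lysenok's L-presentation $\mathcal{G} = F/N$ to compute $M(\mathcal{G}) = (N \cap F')/[N,F]$. I would argue that this group is both infinitely generated and of exponent $2$, which, by countability of $\mathcal{G}$, forces $M(\mathcal{G}) \cong (C_2)^\infty$.

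For infinite generation, I would use the finite quotients $\mathcal{G}_n$ together with the computation $M(\mathcal{G}_n) \cong (C_2)^{2n-2}$ from Theorem 2 (taken as given here), fed into the Stallings five-term exact sequence associated to $1 \to St_{\mathcal{G}}(n) \to \mathcal{G} \to \mathcal{G}_n \to 1$:
\[H_2(\mathcal{G}) \to H_2(\mathcal{G}_n) \to St_{\mathcal{G}}(n)/[\mathcal{G}, St_{\mathcal{G}}(n)] \to H_1(\mathcal{G}) \to H_1(\mathcal{G}_n) \to 0.\]
For $n \geq 3$ one has $\mathcal{G}^{ab} = \mathcal{G}_n^{ab} = (C_2)^3$, since $\mathcal{G}_3 \cong C_2 \wr C_2 \wr C_2$ already has abelianization $(C_2)^3$ and the projections $\mathcal{G}_n \to \mathcal{G}_{n-1}$ remain surjective on $H_1$. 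By exactness the image of $H_2(\mathcal{G}) \to H_2(\mathcal{G}_n)$ equals the kernel of the surjection $H_2(\mathcal{G}_n) \twoheadrightarrow St_{\mathcal{G}}(n)/[\mathcal{G}, St_{\mathcal{G}}(n)]$, and so its $\mathbb{F}_2$-rank is at least $(2n-2) - \dim_{\mathbb{F}_2} St_{\mathcal{G}}(n)/[\mathcal{G}, St_{\mathcal{G}}(n)]$. The branching structure of $\mathcal{G}$ over $K$ provides the means to bound this coinvariants dimension sublinearly in $n$, so the image rank grows without bound and $M(\mathcal{G})$ is infinitely generated.

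For the exponent-$2$ statement, I would analyze $N/[N,F]$ directly as an abelian group: it is generated by the Lysenok relators, and the short exact sequence $0 \to M(\mathcal{G}) \to N/[N,F] \to NF'/F' \to 0$ identifies $M(\mathcal{G})$ with the torsion subgroup of $N/[N,F]$, because $NF'/F' \hookrightarrow F^{ab} \cong \mathbb{Z}^4$ is free abelian. The substitution $\sigma$ descends to a $\mathbb{Z}$-linear endomorphism of $N/[N,F]$ (it sends relators to relators and $[N,F]$ into itself). Inductively on $i$, one checks that the torsion classes contributed by $\sigma^i((ad)^4)$ and $\sigma^i((adacac)^4)$ have order dividing $2$: the base case $i=0$ is a direct commutator computation in $F$ using the initial relators $a^2, b^2, c^2, d^2, bcd$, and the inductive step exploits the $\sigma$-equivariance of the relation module.

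The principal obstacle is this inductive verification of exponent $2$: tracking the $\sigma$-action on the torsion subgroup of $N/[N,F]$ and ruling out any $4$-torsion that might a priori arise from repeated application of $\sigma$. Once both infinite generation and exponent $2$ are in hand, the conclusion $M(\mathcal{G}) \cong (C_2)^\infty$ is immediate.
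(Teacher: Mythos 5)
Your strategy is workable, but note first that this paper never proves the statement: it is quoted from Grigorchuk \cite{MR1676626}, and the closest analogue in the paper is the Section 3.2 computation of $M(\mathcal{G}_n)$ (Theorem 2). Measured against that template, your proposal is half identical and half genuinely different. The exponent-$2$ half is the standard route: you must first normalize the Lysenok relators by elements of $\gp{a^2,b^2,c^2,d^2,bcd}$ so that they lie in $F'$ (this is exactly the role of $L,U_i,V_i$ in Lemma 5 --- the relators as written have nonzero image in $F^{ab}$, so one cannot speak of ``the torsion class contributed by $\sigma^i((ad)^4)$'' without this step), and then check that their squares lie in $[N,F]$. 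The obstacle you single out as principal --- possible $4$-torsion accumulating under iterated $\sigma$ --- is in fact a non-issue: the verification is uniform in $i$, since $U_i^2$ reduces modulo $[N,F]$ to a commutator of the form $[\sigma^i(a),\sigma^i((ad)^4)]$, and $\sigma^i((ad)^4)\in N$ by the very definition of the L-presentation, so this commutator lies in $[F,N]=[N,F]$ outright (compare Lemma 6); no induction on $i$ and no analysis of the $\sigma$-action on the torsion subgroup is needed. Where you genuinely diverge is the infinite-generation half: Grigorchuk (and Section 3.2) proves independence of infinitely many relators directly, by pushing the relation module through the branch homomorphism $\psi$ to obtain the recursion $\Theta_n/[\Theta_n,\Gamma]\cong C_2^2\times\Theta_{n-1}/[\Theta_{n-1},\Gamma]$, whereas you import $M(\mathcal{G}_n)\cong (C_2)^{2n-2}$ as a black box and feed it into the five-term sequence for $1\to St_{\mathcal{G}}(n)\to\mathcal{G}\to\mathcal{G}_n\to 1$. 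That argument is correct: since $H_1(\mathcal{G})\to H_1(\mathcal{G}_n)$ is an isomorphism, $H_2(\mathcal{G}_n)$ surjects onto the coinvariants $St_{\mathcal{G}}(n)/[\mathcal{G},St_{\mathcal{G}}(n)]$ with kernel equal to the image of $H_2(\mathcal{G})$; and because $St_{\mathcal{G}}(n)$ decomposes as $2^{n-3}$ copies of $St_{\mathcal{G}}(3)$ permuted transitively by $\mathcal{G}$, the coinvariants have dimension bounded by the constant $d(St_{\mathcal{G}}(3))$ --- a uniform bound, not merely sublinear, and precisely the homological twin of Lemma 19. Hence the image of $H_2(\mathcal{G})$ in $H_2(\mathcal{G}_n)$ has rank at least $2n-2-d(St_{\mathcal{G}}(3))$, which is unbounded, so $M(\mathcal{G})$ is infinitely generated; together with exponent $2$ and countability this gives $(C_2)^\infty$. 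What your route buys is a clean reduction of independence to the finite quotients plus a transitivity argument; what it costs is that Theorem 2 is itself proved by the full relation-module computation you are trying to bypass, so as a self-contained proof it is no shorter --- though there is no circularity, since the paper's proof of Theorem 2 does not use $M(\mathcal{G})$.
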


The proof of this theorem also shows that there are no redundant relators in Lysenok's presentation:

\begin{thm}{(Grigorchuk, \cite{MR1676626})}
 The relators in the Lysenok presentation are independent, i.e. none of the relators is a consequence 
of the others.
\end{thm}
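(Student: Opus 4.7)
The plan is to translate the statement into a linear-algebra claim in the abelianized relation module, using Hopf's formula together with Theorem 2. Set $F = F(a,b,c,d)$, let $R$ be the normal closure in $F$ of the Lysenok relators, and write $R^{\mathrm{ab}} = R/[R,F]$. Hopf's formula yields the short exact sequence
$$0 \longrightarrow M(\mathcal{G}) \longrightarrow R^{\mathrm{ab}} \longrightarrow RF'/F' \longrightarrow 0,$$
and by Theorem 2 the left-hand term is $(C_2)^\infty$. A relator $r$ is a consequence of the others iff its class $\overline{r} \in R^{\mathrm{ab}}$ lies in the $\field{Z}$-span of the classes of the remaining relators, so the theorem is equivalent to the Lysenok relators forming an irredundant generating set of $R^{\mathrm{ab}}$.

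First project to $F^{\mathrm{ab}} = \field{Z}^4$. The five finite relators map to $(2,0,0,0), (0,2,0,0), (0,0,2,0), (0,0,0,2), (0,1,1,1)$, while every $\overline{u_i}, \overline{v_i}$ maps to $0$ since $u_i, v_i \in F'$. The image $(2,0,0,0)$ is the only one with nonzero $a$-coordinate, so $a^2$ is irredundant; $(0,1,1,1)$ has odd coordinates while every other image has even coordinates, so $bcd$ is irredundant. The only $\field{Z}$-relation among the images of the finite relators in $F^{\mathrm{ab}}$ is $\overline{b^2}+\overline{c^2}+\overline{d^2}=2\,\overline{bcd}$, so a putative dependence of $\overline{b^2}$ (or $\overline{c^2}, \overline{d^2}$) on the other Lysenok relators forces a specific $\field{Z}$-combination with defect $\rho := \overline{b^2 c^2 d^2 (bcd)^{-2}} \in M(\mathcal{G})$, and $b^2$ is a consequence of the other relators iff $\rho$ lies in the subgroup of $M(\mathcal{G})$ generated by $\{\overline{u_i}, \overline{v_i}\}$.

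The same projection argument treats the $u_i$ and $v_i$: each of their classes already lies in $M(\mathcal{G})$, and a direct check shows the only $\field{Z}$-combinations of the five finite-relator classes in $R^{\mathrm{ab}}$ that project to $0$ in $F^{\mathrm{ab}}$ are integer multiples of $\rho$. So $u_i$ (resp. $v_i$) is a consequence of the other relators iff $\overline{u_i} \in \gp{\rho, \overline{u_j}, \overline{v_j} : j \neq i}$ inside $M(\mathcal{G})$. Since $M(\mathcal{G}) \cong (C_2)^\infty$ is an $\field{F}_2$-vector space, the whole theorem reduces to one claim: the set $\{\rho\} \cup \{\overline{u_i}, \overline{v_i} : i \geq 0\}$ is $\field{F}_2$-linearly independent in $M(\mathcal{G})$.

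I would prove this final claim by an induction on the level $i$, driven by the substitution $\sigma$ and the regular branch structure of $\mathcal{G}$ over $K$. The substitution lifts to a free group endomorphism of $F$ sending $u_i \mapsto u_{i+1}$, $v_i \mapsto v_{i+1}$, and via the branch embedding $\varphi: St_\mathcal{G}(1) \hookrightarrow \mathcal{G}\times\mathcal{G}$ one can construct a compatible shift-type operator on $M(\mathcal{G})$ that maps $\overline{u_{i-1}}, \overline{v_{i-1}}$ to $\overline{u_i}, \overline{v_i}$ (modulo lower-level corrections). A hypothetical $\field{F}_2$-dependence in the infinite system would, after finitely many applications of the shift operator backwards, reduce to a relation involving only $\{\rho, \overline{u_0}, \overline{v_0}\}$, which one can rule out by direct inspection in a small finite quotient such as $\mathcal{G}_4$. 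The main obstacle is making the shift operator precise and verifying its injectivity on the relevant subspace; this is essentially what Grigorchuk does in \cite{MR1676626} in tandem with the computation of $M(\mathcal{G}) = (C_2)^\infty$ itself.
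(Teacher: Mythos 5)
Your overall architecture --- pass to $R^{\mathrm{ab}}=R/[R,F]$ via Hopf's formula, split off the projection to $F/F'$, reduce independence to $\field{F}_2$-linear independence of a distinguished family inside $M(\mathcal{G})\cong (C_2)^\infty$, and prove that by a $\sigma$-driven shift induction with a finite base case --- is indeed the architecture of Grigorchuk's argument and of this paper's proof of the finite-quotient analogue (Theorem 3, resting on Lemmas 4--20). But the entry point is wrong: your claim that $u_i,v_i\in F'$ is false. Each $u_i=\sigma^i((ad)^4)=(\sigma^i(ad))^4$ is the fourth power of a word with nontrivial abelianization, so for instance $u_0\mapsto(4,0,0,4)$ and $v_0\mapsto(12,0,8,4)$ in $F^{\mathrm{ab}}$. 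Consequently $\overline{u_i},\overline{v_i}$ do not lie in $M(\mathcal{G})$, your inventory of which integer combinations project to zero is incomplete, your argument for the irredundancy of $a^2$ (``the only image with nonzero $a$-coordinate'') fails as stated (the conclusion survives only because the $u_i,v_i$ images have all coordinates divisible by $4$), and the final claim you reduce everything to does not typecheck. The repair is exactly the paper's Lemma 5: replace $u_i,v_i,w_n,t_n$ by the corrected relators $U_i,V_i,W_n,T_n$ obtained by multiplying by suitable words in the fourth powers of the generators so that they do lie in $F'$, and carry the reduction out with the generating set $B_1,\dots,B_4,L,U_i,V_i$ of $R^{\mathrm{ab}}$; the proof of Theorem 3 then transfers independence back to the original relators via the quotient $\acute K_n=\bar K_n/\langle B_1^2,\dots,B_4^2\rangle$.

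The second, more serious problem is that the crux --- independence of $\{L\}\cup\{U_i,V_i\}$ in $M(\mathcal{G})$ --- is only sketched, and the sketch relies on an injectivity that does not hold. The shift induced by the branch map $\psi$ has a large kernel on the relation module: $\mathrm{Ker}(\psi)$ is the normal closure of $B_1,\dots,B_4,L,U_0,U_1,V_0$ (Lemma 11, quoting \cite{MR1676626}). So a ``backward'' induction cannot simply push a hypothetical dependence down to level $0$; at each step one must split off a finite factor (generated by $U_1,V_0$, and at the bottom by $L,U_0,U_1,V_0$) and establish its independence by a separate computation. That computation is the real work: it is done in Lemmas 8--10 and 13 by presenting the nilpotent quotients $Q_n=F'/([K_n,F]\gamma_5(F)F^{(2)})$ and identifying the images of $L,U_0,U_1,V_0$ as $[b,c],[a,d]^2,[a,c]^4,[a,d]^2[a,c,d]^2$. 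Your proposed base case, ``a relation involving only $\{\rho,\overline{u_0},\overline{v_0}\}$ ruled out in $\mathcal{G}_4$,'' omits $U_1$, which lies in the kernel of the shift and therefore cannot be recovered by the induction; and inspection in a single finite quotient is not obviously adequate without the explicit $Q_n$ presentations. As written, the proposal is a correct roadmap whose first lemma is false and whose decisive step is deferred.
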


Recently it was shown in \cite{corn} that there are finitely generated  infinitely presented solvable groups which 
do not have independent set of relators. \\

Our second theorem is the computation of Schur multipliers for the finite groups $\mathcal{G}_n$. It relies on 
theorem 1 and and is similar to Grigorchuk's computations done in \cite{MR1676626} with small modifications. The proof is 
presented in section 3.2. It was indicated to us by L.Bartholdi that a shorter proof for Theorem 2 could be given using
ideas in \cite{MR2665774}.\hfill

\hfill

\textbf{Theorem 2.} \textit{$M(\mathcal{G}_n)\cong C_2^{2n-2}$}\hfill

\hfill

Similarly we also prove that the relators of the presentations of Theorem 1 are independent, again the proof 
 will be presented in section 3.2.\hfill

\hfill

\textbf{Theorem 3.} \textit{The relators in the presentations of theorem 1 are independent.}

\subsection*{Minimality of presentations and deficiency of groups}

We begin this subsection with some definitions:

\begin{defn}If $G$ is a group let $d(G)$ denote the minimal number of generators of $G$.
\end{defn}

\begin{defn}
 The minimal number $m$ such that $G$ has a presentation  $$G=\gp{x_1,\ldots,x_t \mid r_1,\ldots,r_m} $$
(a presentation with $m$ relators)
will be denoted by $r(G)$.
\end{defn}

\begin{defn}
 The deficiency of $G$ (denoted by $def(G)$) is defined to be the  minimal $m-t$ such that $G$ has a presentation with $t$
generators and $m$ relators.
\end{defn}

\begin{defn}
 A presentation $G=\gp{x_1,\ldots,x_t \mid r_1,\ldots,r_m}$ is called minimal if $t=d(G)$ and 
$m=r(G)$.
\end{defn}

The following question is open (See \cite{MR0457538}):\hfill

\hfill

\textit{Do  finite groups have  minimal presentations?} \hfill

\hfill

A stronger question is the following:\hfill

\hfill

\textit{Does a finite group  have a presentation realizing its deficiency with $d(G)$ number of generators?}
\hfill

\hfill

Clearly an affirmative answer to the second question gives an affirmative answer to the first.
 Lubotzky \cite{MR1848964} gave affirmative answer to the analogous question in the category of 
profinite groups.
It was proven by Rapaport \cite{MR0308277} that the second question has affirmative answer for 
nilpotent groups. Therefore the groups $\mathcal{G}_n$ have minimal presentations. Our next results exhibits
such a minimal presentation for $\mathcal{G}_n$ which is obtained from the presentations of Theorem 1
by a simple Tietze transformation.  The proof relies on the following inequality for the deficiency:

\hfill

Given a presentation $G=\gp{x_1,\ldots,x_t \mid r_1,\ldots,r_m}=F/R$ of a finite group
$G$,  we have the quotient map 
$$\phi :  R/[R,F] \rightarrow R / (R\cap F') $$ whose kernel is the Schur multiplier $M(G)$. 
But $R/(R \cap F') \cong RF'/ F'$ which is free abelian of  rank $t$ because $R$ has finite index in $F$.
Hence 
$$d(M(G))=d(R/[F,R])-t \leq m-t $$ and since $M(G)$ does not depend on the presentation, the following inequality holds

\begin{equation}
  0 \leq d(M(G)) \leq def(G) \leq m-t
\label{defi}
\end{equation}

\textbf{Theorem 4.} \textit{For $n \geq 3 $ we have 
$$\mathcal{G}_n=\gpr{a,b,c}{a^2,b^2,c^2,(bc)^2 ,u_0,\ldots,u_{n-3},v_0,\ldots,v_{n-4},w_n,t_n}$$}
\textit{where 
$$u_i=\sigma^i((abc)^4),\; v_i=\sigma^i((abcacac)^4),\; w_n=\sigma^{n-3}((ac)^4),\; t_n=\sigma
^{n-3}((abac)^4)$$}
\textit{and $\sigma$ is the substitution given by}

$$
\sigma = \left\{
\begin{array}{ccc}
   a & \mapsto & aca \\
   b & \mapsto  & bc \\
   c & \mapsto & b \\

  \end{array}
\right.$$

\textit{and this presentation is minimal and realizes the deficiency  $def(\mathcal{G}_n)=2n-2$.}

\begin{proof}
 The presentations found in theorem 1 contain the relators $d=bc$. Hence applying Tietze transformations we
get the asserted presentations. By theorem 2 we have $d(M(\mathcal{G}_n))=2n-2$. Using equation (\ref{defi}) and 
counting generators and relators in the above presentation we get $ 2n-2\leq def(\mathcal{G}_n) \leq 2n-2 $.\hfill

\hfill

We have $\mathcal{G}_3^{ab}\cong (C_2)^3$ and $\mathcal{G}_n$ maps onto $\mathcal{G}_3$. Also $\mathcal{G}^{ab}\cong (C_2)^3 $ and $\mathcal{G}$  maps onto $\mathcal{G}_n$.
These show that $\mathcal{G}_n^{ab}\cong (C_2)^3$ and  $d(\mathcal{G}_n)=3$. Hence the above presentation realizes the deficiency 
with minimal number of generators. Therefore it is necessarily minimal.
\end{proof}

\subsection*{Profinite completion of the Grigorchuk Group}

The full automorphism group $Aut(X^*)$ is a profinite group. It is the inverse limit of the system
$$\{Aut(X^*_n) \mid n\geq 1\} $$ 

where $Aut(X^*_n)$ denotes the automorphism group of the finite tree $X^*_n$ consisting of the first $n$ levels and the 
maps $$\phi_n : Aut(X^*_n) \longrightarrow Aut(X^*_{n-1}) $$ are given by restriction.\hfill

\hfill

Given $G \leq Aut(X^*)$ one can talk about 3 groups $\hat G,\; \hat G_p$ and $\bar G$ where the 
last one denotes the closure of $G$ in $Aut(X^*)$. Since the Grigorchuk group $\mathcal{G}$ is a $2$-group it follows
that $ \mathcal{\hat G} =  \mathcal{ \hat G}_2$. It is also true that $ \mathcal{\bar G}$ coincides with these groups because of the following:

\begin{defn}
 A subgroup $G\leq Aut(X^*)$ is said to have the congruence property if every finite index subgroup 
of $G$ contains the subgroup $St_{G}(n)$ for some $n$.
\end{defn}

\begin{thm}{(See \cite{MR1765119})} $\mathcal{G}$ has the congruence property.
\end{thm}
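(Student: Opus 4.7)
The plan is to show every finite index subgroup $H \leq \mathcal{G}$ contains some $St_\mathcal{G}(n)$. Since each $St_\mathcal{G}(n)$ is normal in $\mathcal{G}$, it lies in $H$ as soon as it lies in the normal core $\bigcap_{g \in \mathcal{G}} g H g^{-1}$; I would therefore first reduce to the case where $H$ itself is a normal subgroup of finite index.

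Next I would bring in the regular branch structure. Iterating the inclusion $K \times K \leq \varphi(K)$ with $K = \gp{(ab)^2}_\mathcal{G}^\#$ produces, for every $n$, a finite index subgroup $K_{(n)} \leq St_\mathcal{G}(n)$ isomorphic to $K^{2^n}$ whose elements act on each level-$n$ subtree by an independent element of $K$. Combined with $St_\mathcal{G}(3) \leq K$ from the preliminaries, decomposing via $\varphi^n$ yields the coordinate-wise inclusion $St_\mathcal{G}(n+3) \leq K_{(n)}$. Thus it suffices to produce some $n$ with $K_{(n)} \leq H$.

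The heart of the argument is manufacturing the full branch subgroup $K_{(n)}$ inside $H$ from a single nontrivial $g \in H$. I would take $n$ minimal with $g \in St_\mathcal{G}(n)$, so that some section $g_v$ at a vertex $v$ of level $n$ is a nontrivial element of $\mathcal{G}$. For any $h \in \mathcal{G}$ supported on the subtree below $v$ (available in abundance via the iterated branch relation), the commutator $[g,h] \in H \cap St_\mathcal{G}(n)$ has all level-$n$ sections trivial except at $v$, where it equals $[g_v,h_v]$. Letting $h_v$ range over $K$ and using that $\mathcal{G}$ is just-infinite, the normal closure of the collection $\{[g_v,k] : k \in K\}$ has finite index in $\mathcal{G}$; spreading this over every level-$n$ vertex via the level-transitive action of $\mathcal{G}$, and refining by one further application of the branch relation, would give $K_{(m)} \leq H$ for some $m \geq n$.

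The main obstacle is this last upgrade: going from a finite index piece of $\mathcal{G}$ captured at a single vertex to the full branch subgroup $K_{(m)}$. This requires a careful induction interleaving normality, level-transitivity, and iterated application of the branch inclusion $K \times K \leq \varphi(K)$, and is where the substantive work of the theorem lies. Once it is established, the coordinate-wise inclusion $St_\mathcal{G}(m+3) \leq K_{(m)} \leq H$ completes the proof.
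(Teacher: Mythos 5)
The paper offers no proof of this statement at all: it is quoted verbatim from \cite{MR1765119}, so there is nothing internal to compare your argument against, and it must be judged on its own. The overall strategy you describe is the standard one for branch groups — reduce to a normal finite-index subgroup $H$, manufacture a branch subgroup $K_{(m)}$ inside $H$, and then conclude via $St_{\mathcal{G}}(3)\leq K$ and $\varphi(St_{\mathcal{G}}(n))=St_{\mathcal{G}}(n-1)\times St_{\mathcal{G}}(n-1)$ that $St_{\mathcal{G}}(m+3)\leq K_{(m)}\leq H$. The section computation $[g,h]_v=[g_v,h_v]$ for $g$ fixing $v$ and $h$ supported below $v$ is also correct, and such $h$ with $h_v$ ranging over $K$ do exist by iterating $K\times K\leq\varphi(K)$.

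The step you yourself flag as the main obstacle is, however, a genuine gap, and the route you choose for it is circular. You pass from the elements $[g_v,k]$, $k\in K$, to their normal closure $N_v$ in $\mathcal{G}$, invoke just-infiniteness to conclude that $N_v$ has finite index, and then hope that ``one further application of the branch relation'' yields $K_{(m)}\leq H$. But knowing only that a normal subgroup has finite index gives you no way to locate a $K_{(j)}$ (equivalently, a level stabilizer) inside it — that is precisely the congruence property being proved, so the reduction feeds on itself. The standard escape is different at exactly this point: choose a vertex $u$ that $g$ \emph{moves}, say $g(u)=w\neq u$; then for $h,h'$ supported below $u$ one has $[[g,h],h']=[h,h']$, because $(h^{-1})^{g}$ is supported below $w$ and commutes with $h'$. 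This places the entire derived subgroup of the rigid vertex stabilizer — in particular $K'$ in the coordinate at $u$ — inside $H$ with no appeal to finite index, and an explicit computation showing that $K'$ itself contains a branch subgroup ($K\times\cdots\times K$ placed a bounded number of levels further down) then converts this into some $K_{(m)}\leq H$. Your sketch is missing both ingredients: the double-commutator trick at a moved vertex, and the concrete inclusion of a branch subgroup in $K'$. (A small slip as well: you want the \emph{maximal} $n$ with $g\in St_{\mathcal{G}}(n)$, not the minimal one, which is always $n=0$.)
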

Now it follows that $ \mathcal{\hat G} \cong  \mathcal{\bar G}$ because the congruence property shows that

$$\{St_\mathcal{G}(n)\mid  n\geq 1\} $$ is a neighborhood basis of the identity in $\mathcal{G}$.\\

The congruence property also shows that $\mathcal{\hat G}$ is the inverse limit of the inverse system 
$\{\mathcal{G}_n\:,\: n \geq 1 \}$.\\

Recall that  a profinite group  $G$ is finitely presented (as a profinite group) if 
there is an exact sequence

$$1\longrightarrow R \longrightarrow \hat F \longrightarrow G \longrightarrow 1 $$

where $F$ is a free group with finite rank and $R$ is the closed normal subgroup of $\hat F$ generated 
by some $\{r_1,\ldots,r_m\}\subset R$. \\

Clearly if $G\cong F/R$ is a presentation of  a discrete group $G$, then  $\hat G \cong \hat F / \bar R$ is a
profinite presentation for $\hat G$ where $\bar R$ denotes the closure of $R$ in $\hat F$. Therefore profinite completions of finitely presented groups
are necessarily finitely presented. It was indicated us by Lubotzky (private communication) that the converse of 
this statement is not true. That is there are finitely generated residually finite groups $G$ and $H$ with $G$ finitely
presented and $H$ not finitely presented and $\hat G$ is isomorphic to $\hat H$.
Therefore one can ask  whether the profinite completion of the Grigorchuk group is finitely 
presented or not. Our last theorem shows that it indeed is not finitely presented. It relies on the following
well known fact:

\begin{thm}{(See \cite{MR1691054} page 242)}
A finitely generated pro-$p$ group $G$ is finitely presented if and only if $H^2(G,\field{F}_p)$ is 
finite.
 
\end{thm}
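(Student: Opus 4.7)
The plan is to translate finite presentability of $G$ as a pro-$p$ group into a finiteness condition on $H^2(G, \field{F}_p)$ via a minimal free pro-$p$ resolution. Fix a minimal pro-$p$ presentation $1 \to R \to F \to G \to 1$, where $F$ is a free pro-$p$ group of rank $d(G)$, which is finite by the standing hypothesis that $G$ is finitely generated, and $R$ is a closed normal subgroup. The key intermediate claim I would isolate is that the minimal cardinality of a topological normal generating set for $R$ inside $F$ equals $\dim_{\field{F}_p} H^2(G, \field{F}_p)$; the theorem follows at once, because $G$ is finitely presented as a pro-$p$ group precisely when $R$ is the closed normal subgroup generated by finitely many elements.

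The cohomological half of the argument is the Lyndon--Hochschild--Serre five-term exact sequence in continuous cohomology applied to $1 \to R \to F \to G \to 1$:
\[
0 \to H^1(G, \field{F}_p) \to H^1(F, \field{F}_p) \to H^1(R, \field{F}_p)^G \to H^2(G, \field{F}_p) \to H^2(F, \field{F}_p).
\]
Because $F$ is a free pro-$p$ group, it has cohomological dimension one, so the last term vanishes. Minimality of the chosen presentation forces the inflation map $H^1(G, \field{F}_p) \to H^1(F, \field{F}_p)$ to be an isomorphism, whence the sequence collapses to
\[
H^2(G, \field{F}_p) \;\cong\; H^1(R, \field{F}_p)^G \;\cong\; \mathrm{Hom}_{\mathrm{cts}}\bigl(R/R^p[R,F],\,\field{F}_p\bigr).
\]
Thus $H^2(G, \field{F}_p)$ is finite if and only if the relation module $R/R^p[R,F]$ is a finite-dimensional $\field{F}_p$-vector space.

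To close the loop, I would invoke the pro-$p$ Nakayama lemma: a subset $S\subset R$ topologically normally generates $R$ in $F$ if and only if the images of its elements span $R/R^p[R,F]$ over $\field{F}_p$. This identifies the minimal number of relators in a pro-$p$ presentation of $G$ with $d(G)$ generators as exactly $\dim_{\field{F}_p} H^2(G, \field{F}_p)$, giving both directions of the theorem (for the ``if'' direction one may also need to note that any finite presentation with more than $d(G)$ generators can be reduced to the minimal one at the cost of finitely many extra relators). The step I expect to be the main obstacle is making Nakayama rigorous in this normal-subgroup setting: one must treat $R^{\mathrm{ab}}\otimes_{\mathbb{Z}_p}\field{F}_p$ as a compact topological $\field{F}_p[\![G]\!]$-module and verify that lifts of a topological generating set of the quotient really do normally generate $R$ in $F$, which uses the inverse-limit structure of $F$ and the continuity of the action rather than an abstract generation statement.
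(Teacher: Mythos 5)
The paper offers no proof of this statement: it is quoted as a known fact with a citation and used as a black box in the proof of Theorem 5, so there is nothing internal to compare against. Your argument is essentially the standard textbook proof of the cited fact, and it is correct in outline. The five-term exact sequence in continuous cohomology, the vanishing of $H^2(F,\field{F}_p)$ because a free pro-$p$ group has cohomological dimension one, and the identification $H^1(R,\field{F}_p)^G \cong \mathrm{Hom}_{\mathrm{cts}}(R/\overline{R^p[R,F]},\field{F}_p)$ (note $[R,R]\subseteq [R,F]$, so this quotient already accounts for $H^1(R,\field{F}_p)=\mathrm{Hom}_{\mathrm{cts}}(R^{\mathrm{ab}}/p,\field{F}_p)$ together with $G$-invariance) are all as in the standard references. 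The two points you flag are indeed where the care is needed, and both go through: for the Nakayama step, if $N$ is the closed normal subgroup generated by $S$ and $N\overline{R^p[R,F]}=R$ but $N\neq R$, then $R/N$ has a nontrivial finite $p$-group quotient $Q$ on which a finite $p$-group quotient of $F/N$ acts, and since a $p$-group acting on a nontrivial $\field{F}_p$-vector space has nontrivial coinvariants, $Q^p[Q,Q][Q,F]\neq Q$, a contradiction; and for the direction ``finitely presented implies $H^2$ finite'' you do not actually need to pass back to the minimal presentation, since applying the same five-term sequence to an arbitrary finite presentation $1\to R'\to F'\to G\to 1$ exhibits $H^2(G,\field{F}_p)$ as a quotient of $\mathrm{Hom}(R'/\overline{R'^p[R',F']},\field{F}_p)$, which is finite because conjugation acts trivially on that quotient and so the finitely many normal generators of $R'$ already span it. With those two points made precise your proof is complete; it supplies an argument where the paper supplies only a reference.
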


\textbf{Theorem 5} \textit{We have $H^2( \mathcal{\hat G},\field{F}_2)\cong (C_2)^\infty$ and hence $ \mathcal{\hat G}$ is not finitely presented as a
profinite group.}\hfill

\hfill

The proof of theorem 5 is presented in section 3.3.

\section{Proofs of Theorems}
\subsection{Finding Presentations for $\mathcal{G}_n$}

This section is devoted to the proof of theorem 1. \hfill

\hfill

Let  $\Gamma=\gpr{a,b,c,d}{a^2,b^2,c^2,d^2,bcd,(ad)^4}$. Let us denote by  $\pi: \Gamma \longrightarrow
\mathcal{G}$ the canonical surjection. Consider the subgroup 
$\Xi=\gp{b,c,d,b^a,c^a,d^a}_{\Gamma}$ which is the lift of the first level stabilizer $St_\mathcal{G}(1)$ to $\Gamma$.\hfill

\hfill

We have a homomorphism 

$$
\begin{array}{cccc}
 \bar \varphi:& \Xi & \longrightarrow & \Gamma \times \Gamma\\
& b & \mapsto         & (a,c)\\
& c & \mapsto         & (a,d)\\
& d & \mapsto         & (1,b)\\
& b^a & \mapsto         & (c,a)\\
& c^a& \mapsto         & (d,a)\\
& d^a & \mapsto         & (b,1)\\
\end{array}
 $$

which is analogous to $\varphi : St_\mathcal{G}(1) \longrightarrow \mathcal{G} \times \mathcal{G}$.\hfill

\hfill

(The fact that $\bar\varphi$ is well defined can be checked by first finding a presentation for 
$\Xi$ using Reidemeister-Schreier process and checking that it maps relators to relators.)

Given $w \in \Xi$ let us write $\bar\varphi (w)=(w_0,w_1)$ which is consistent with  the section notation of
tree automorphisms.\hfill

\hfill

Recall the substitution $\sigma$ from theorem 1  given by 
$$
\sigma = \left\{
\begin{array}{ccc}
   a & \mapsto & aca \\
   b & \mapsto  & d \\
   c & \mapsto & b \\
    d & \mapsto & c
  \end{array}
\right.$$

It is easy to check that given $w \in \Xi$ one has $$\bar \varphi (\sigma(w))=(v,w) $$

where $v \in \gp{a,d}_{\Gamma}\cong D_8$. Therefore since all $u_i,v_i,w_i,t_i$ are $4$-th powers and $D_8$
has exponent $4$, we have the following equalities:
\begin{eqnarray}
\bar \varphi(u_i)=(1,u_{i-1})  \nonumber \\
\bar \varphi(v_i)=(1,v_{i-1}) \nonumber \\
\bar \varphi(w_i)=(1,w_{i-1}) \nonumber \\
\bar \varphi(t_i)=(1,t_{i-1}) 
\label{incik}
\end{eqnarray}

Let $ \Omega=Ker(\pi)$ so that $\mathcal{G}=\Gamma / \Omega$. It is known (for example see \cite{MR1786869}) 
that $\Omega$ is a strictly increasing union  
$\Omega=\bigcup_{n}\Omega_n$ , $\Omega_n \subset \Omega_{n+1}$ (This clearly shows that $\mathcal{G}$ is not finitely presented).  
The subgroups  $\Omega_n$ can be defined recursively as follows:
$$\Omega_1 = Ker(\bar \varphi)\quad \text{and} \quad  \Omega_n=\{w \in \Xi \mid w_0,w_1 \in \Omega_{n-1} \} $$ 

It is known that  $\Omega_n=\gp{u_1,\ldots,u_n, v_0,\ldots.v_{n-1}}_{\Gamma} ^\#$ (see \cite{MR1616436}).\hfill

\hfill

The subgroups $\Omega_n$ are related to the "branch algorithm" which solves 
the word problem in $\mathcal{G}$ (See \cite{MR2195454}). Roughly speaking $\Omega_n$ consists of elements for 
which the algorithm stops after  $n$ steps. \hfill

\hfill

Similarly we have subgroups $\Upsilon_n$ of $\Gamma$ such that $\mathcal{G}_n=\mathcal{G}/St_\mathcal{G}(n)=\Gamma/\Upsilon_n$ where 
$\Upsilon_{n+1}\subset \Upsilon_n$ and $\bigcap_n \Upsilon_n=\Omega$. Hence $St_\mathcal{G}(n)=\Upsilon_n / \Omega$.
A recursive definition for $\Upsilon_n$ is:
$$\Upsilon_1 =\Xi $$ and
$$\Upsilon_n =\{w \in \Xi \mid  w_0,w_1 \in \Upsilon_{n-1} \} $$

We will prove  theorem 1 by showing that for 
$n \geq 3$ we have $$\Upsilon_n=\gp{u_1,\ldots,u_{n-3},v_0,\ldots,v_{n-4},w_n,t_ n}_{\Gamma}^\# $$

This will be done by induction on $n$ and the case $n=3$ follows from the following 3 lemmas:

\begin{lem} We have 
 \begin{equation}
\mathcal{G}_3 \cong (C_2 \wr C_2) \wr C_2= \gpr{x,y,z}{x^2,y^2,z^2,[x,x^y],[y,y^z],[x,x^z],[x,y^z],[y,x^z]}
\label{wrpres}
\end{equation}
where  $x=ada,y=c,z=a $
\end{lem}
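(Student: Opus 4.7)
The plan is a comparison-of-orders argument. I would construct a surjection from the abstractly presented group
$$W := \gpr{x, y, z}{x^2, y^2, z^2, [x, x^y], [y, y^z], [x, x^z], [x, y^z], [y, x^z]}$$
onto $\mathcal{G}_3$ via $x \mapsto ada$, $y \mapsto c$, $z \mapsto a$, and separately bound $|W| \le 128$. Since $|\mathcal{G}_3| = 2^7 = 128$ is recorded in the preliminaries, these two facts force $W \xrightarrow{\sim} \mathcal{G}_3$; the identification of $W$ with $(C_2 \wr C_2) \wr C_2$ then follows by recognizing the presentation as the standard iterated wreath product presentation, after noting the cosmetic substitution $[x, x^y] \equiv (xy)^4$ modulo $x^2 = y^2 = 1$.

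For surjectivity, generation of $\mathcal{G}_3$ by $\{ada, c, a\}$ is immediate from $bcd = 1$ and $a^2 = 1$: these give $d = a(ada)a$ and $b = cd$, so $\mathcal{G} = \gp{a, c, ada}$. The squared relations $x^2 = ad^2a = 1$, $y^2 = c^2 = 1$, $z^2 = a^2 = 1$ follow from the defining relations of $\mathcal{G}$. For the four ``disjoint-support'' commutator relations, I would unwind the wreath recursion: $c = (a, d)$ with $d = (1, b)$ and $b$ of trivial root permutation implies $c$, and hence $x = ada$, act trivially on the right subtree $1 X^*$ through level $3$. Thus the $z$-conjugates $x^z, y^z$ act trivially on the left subtree, and commutation of $y, x$ with $y^z, x^z$ follows from disjointness of supports. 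The remaining relation $[x, x^y] = 1$ reduces, modulo $x^2 = y^2 = 1$, to $(xy)^4 = 1$, which I would check by the direct level-$3$ computation that $xy = ada \cdot c$ acts as a $4$-cycle on the four left-half leaves and trivially on the right.

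For the bound $|W| \le 128$, the identity $[x, x^y] = (xy)^4$ in any group where $x, y$ are involutions shows $\gp{x, y}_W$ is a quotient of $C_2 \wr C_2$, hence of order at most $8$. The remaining four commutators force $\gp{x, y}_W$ to centralize its $z$-conjugate $\gp{x^z, y^z}_W$, so their product is a quotient of $\gp{x, y}_W \times \gp{x^z, y^z}_W$ and has order at most $64$. Since this product is $z$-invariant with $z$ of order at most $2$, we conclude $|W| \le 128$.

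The main obstacle is verifying the single relation $(xy)^4 = 1$ in $\mathcal{G}_3$, since this is the only one not handled either by a support argument or by a direct algebraic consequence of the $\mathcal{G}$-relations; the $4$-cycle structure of $ada \cdot c$ on level $3$ is what encodes the dihedral factor $C_2 \wr C_2$ inside the wreath. Once this single check is carried out, the wreath-product order bound combined with the $\mathcal{G}_3$-side verifications forces the presented group to be $\mathcal{G}_3$.
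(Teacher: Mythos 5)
Your argument is correct and is in substance the same as the paper's, which disposes of this lemma by ``direct inspection of the action of $a,c,ada$ on the first 3 levels'' together with a citation to de la Harpe; you have simply written out that inspection in full (the disjoint-support argument for the four mixed commutators, the single genuine check $(xy)^4=1$ via the $4$-cycle on the left-half leaves) and supplied the standard order bound $|W|\le 128$ on the presented group that the paper leaves implicit. The only cosmetic slip is the phrase ``implies $c$, and hence $x=ada$, act trivially on $1X^*$'': the triviality of $ada$ on the right subtree comes from $d=(1,b)$ giving $ada=(b,1)$, not from the structure of $c$, but the claim itself is true and the proof stands.
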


\begin{proof}
 Direct inspection of the action of $a,c,ada$ on the tree consisting of the first 3 levels
(See \cite{MR1786869} page 226).
\end{proof}

\begin{lem}
 Presentation (\ref{wrpres}) is equivalent to
 \begin{equation}
 \mathcal{G}_3=\gpr{a,b,c,d}{a^2,b^2,c^2,d^2,bcd,(ad)^4,(ac)^4,(adac)^4}
\label{presg3}
\end{equation}
\end{lem}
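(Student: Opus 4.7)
The plan is to prove the equivalence by exhibiting mutually inverse surjective homomorphisms between the two presented groups. Write $G_A$ for the group defined by presentation (\ref{wrpres}) and $G_B$ for the group defined by presentation (\ref{presg3}); both are claimed to equal $\mathcal{G}_3$.

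First I would define $\phi : G_A \to G_B$ by $x \mapsto ada$, $y \mapsto c$, $z \mapsto a$ and check that each of the eight relators of (\ref{wrpres}) is killed. The three squares $x^2, y^2, z^2$ are immediate from $a^2 = c^2 = d^2 = 1$. For the five commutator relators the key tool is the elementary identity $[u,v] = (uv)^2$ valid whenever $u,v$ are involutions. Under $\phi$ one computes $x^z = d$, $y^z = aca$, $x^y = cadac$, so the commutators become $[x,x^z] \mapsto (ad)^4$, $[x,x^y] \mapsto (adac)^4$, $[y,y^z] \mapsto (ca)^4$ (a cyclic conjugate of $(ac)^4$), $[y, x^z] \mapsto (cd)^2$, and $[x, y^z] \mapsto (adca)^2$. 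The last two are trivial in $G_B$ because $bcd = 1$ forces $cd = dc = b$, whence both squares collapse to $b^2 = 1$.

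Next I would define $\psi : G_B \to G_A$ by $a \mapsto z$, $c \mapsto y$, $d \mapsto zxz$, $b \mapsto zxzy$ (the latter two chosen to satisfy $bcd = 1$), and check that the eight relators of (\ref{presg3}) vanish in $G_A$. The squares $a^2, c^2, d^2$ reduce directly to $z^2 = y^2 = x^2 = 1$; the relator $b^2 \mapsto (zxzy)^2$ needs the commutator relator $[y, zxz] = [y, x^z] = 1$ to move $y$ past $zxz$ and obtain $(zxz)^2 y^2 = 1$; the relator $bcd$ reduces identically. The three fourth-power relators $(ad)^4, (ac)^4, (adac)^4$ collapse, after $z^2 = 1$ cancellation, to $(xz)^4, (zy)^4, (xy)^4$, which are (or are cyclic conjugates of) $[x,x^z]$, $[y,y^z]$, $[x,x^y]$ via the same involution identity.

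Since $\{a,b,c,d\}$ and $\{x,y,z\}$ are each expressible in terms of the other, both $\phi$ and $\psi$ are surjective. Lemma 1 gives $|G_A| = |(C_2 \wr C_2) \wr C_2| = 128$, so $G_B$ is a quotient of a finite group via $\phi$ while $\psi$ forces $|G_B| \geq 128$; hence both maps are isomorphisms. The main obstacle I anticipate is the asymmetry between the two relator lists: (\ref{wrpres}) contains two commutators $[y,x^z]$ and $[x,y^z]$ with no direct counterpart among the fourth-power relators of (\ref{presg3}), so the crux of the verification of $\phi$ is showing that these collapse via the relation $bcd = 1$, a relation that is invisible in the $(x,y,z)$-presentation.
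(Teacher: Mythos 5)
Your proof is correct and follows essentially the same route as the paper: the substitution $x=ada$, $y=c$, $z=a$ turns the five commutator relators into exactly the words the paper lists ($(adac)^4$, $(ca)^4$, $(ad)^4$, and the two that die via $bcd=1$), which is what the paper compresses into ``applying Tietze transformations.'' You merely make the two directions of the Tietze equivalence explicit as a pair of mutually inverse homomorphisms, which is a welcome amplification but not a different argument.
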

\begin{proof}
 Follows from the following equations and applying the Tietze transformations to (\ref{wrpres}).

$$\begin{array}{l}

[x,x^y]=[ada,cadac]=(adac)^4 \\
 
[y,y^z]=[c,aca]=(ca)^4 \\

[x,x^z]=[ada,d]=(ad)^4 \\

[x,y^z]=[ada,aca]  \\

[y,x^z]=[c,d]

\end{array}$$

\end{proof}

\begin{lem}
Presentation (\ref{presg3}) is equivalent to 

\begin{equation}
 \mathcal{G}_3=\gpr{a,b,c,d}{a^2,b^2,c^2,d^2,bcd,(ad)^4,(ac)^4,(abac)^4}
\end{equation}

and hence $\Upsilon_3=\gp{w_3,t_3}_{\Gamma}^\#$.
\end{lem}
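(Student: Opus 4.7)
The plan is to prove the stronger statement that $(adac)^4$ and $(abac)^4$ are already equal as elements of the intermediate quotient $\tilde{\Gamma} := \Gamma / \gp{(ac)^4}_\Gamma^\#$. Once this equality in $\tilde{\Gamma}$ is established, the normal closures of the two elements automatically coincide, so by a single Tietze transformation the two presentations define the same group, and $\Upsilon_3 = \gp{w_3,t_3}_\Gamma^\#$ follows.

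I would first record a few easy consequences of the defining relations of $\Gamma$. From $bcd=1$ together with $b^2=c^2=d^2=1$ one gets $d=cb$, and then $1=d^2=cbcb$ forces $bc=cb$; thus $b,c,d$ pairwise commute and $d=bc$. Also $(aca)^2 = ac(a^2)ca = a(c^2)a = 1$, so $aca$ is an involution. The key observation is that in $\tilde{\Gamma}$ the added relation $(ac)^4=1$ can be rewritten as $acaca\cdot cac = (ac)^4 = 1$, giving the $D_8$-identity $acaca=cac$. Setting $h := aca$ and $k := abac$, and using this together with $cb=bc$ and $d=bc$,
\begin{align*}
hk &= (aca)(abac) = ac\cdot bac = a(cb)ac = abcac = adac,\\
kh &= (abac)(aca) = ab\cdot acaca = ab\cdot cac = abcac = adac,
\end{align*}
so $h$ and $k$ commute in $\tilde{\Gamma}$.

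Since $h$ is an involution commuting with $k$, commutativity gives
$$(adac)^4 = (hk)^4 = h^4k^4 = 1\cdot(abac)^4 = (abac)^4$$
in $\tilde{\Gamma}$, which is the required identity. The entire argument rests on the single non-routine step of spotting the $D_8$-identity $acaca=cac$, equivalently the commutation of $aca$ with $abac$ modulo the seven relators common to both presentations; everything else is a mechanical reduction using $a^2=c^2=1$ and $d=bc$.
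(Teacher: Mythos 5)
Your proof is correct and rests on the same key fact as the paper's: the dihedral identity $acaca=cac$ (equivalently $aca=cacac$) forced by $(ac)^4=1$, combined with $d=bc$ and the commutativity of $b,c$. The paper carries this out as a direct rewriting $(adac)^4=(adacadac)^2=(adcacacdac)^2=(abacabac)^2=(abac)^4$, and your factorization $adac=(aca)(abac)$ into a commuting pair with $(aca)^2=1$ is just a tidier packaging of the same computation, so the two arguments are essentially identical.
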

\begin{proof}
 Follows from the following equalities:
$$(adac)^4=(adacadac)^2=(adcacacdac)^2=(abacabac)^2=(abac)^4 $$

where in  step 3 we use the equality $aca=cacac $.
\end{proof}

Recall that $\mathcal{G}$ is regular branch over the subgroup $K=\gp{(ab)^2}_{\mathcal{G}} ^\#$.

\begin{lem}
 $St_\mathcal{G}(3) \leq K$ and hence $$\varphi (St_\mathcal{G}(n))=St_\mathcal{G}(n-1) \times St_\mathcal{G}(n-1) $$

for $n \geq 4$ , therefore we have

$$ \bar \varphi(\Upsilon_n)=\Upsilon_{n-1} \times \Upsilon_{n-1} $$

for $n \geq 4$.
\end{lem}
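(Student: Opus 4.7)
The plan is to assemble the three claims in sequence, starting from known information. The opening assertion $St_\mathcal{G}(3)\leq K$ already appears in the list of standard properties of $\mathcal{G}$ recorded in Section~1, so I would simply cite it.

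For the branch equality $\varphi(St_\mathcal{G}(n))=St_\mathcal{G}(n-1)\times St_\mathcal{G}(n-1)$ when $n\geq 4$, I would prove two inclusions separately. The containment $\varphi(St_\mathcal{G}(n))\subseteq St_\mathcal{G}(n-1)\times St_\mathcal{G}(n-1)$ is immediate from the definition of sections: an automorphism fixing every level-$n$ vertex necessarily fixes level $1$, and each of its two sections fixes every level-$(n-1)$ vertex of the corresponding first-level subtree. The reverse inclusion is where the hypothesis $n\geq 4$ enters: it forces $n-1\geq 3$, so $St_\mathcal{G}(n-1)\leq K$, and therefore any pair $(h_0,h_1)\in St_\mathcal{G}(n-1)\times St_\mathcal{G}(n-1)$ sits inside $K\times K\leq \varphi(K)$ by the regular branch property. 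Any preimage $g\in St_\mathcal{G}(1)$ automatically lies in $St_\mathcal{G}(n)$ because its two sections are $h_0,h_1$, which fix level $n-1$.

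Lifting this equality from $\mathcal{G}\times\mathcal{G}$ to $\Gamma\times\Gamma$ is the next step. The inclusion $\bar\varphi(\Upsilon_n)\subseteq \Upsilon_{n-1}\times \Upsilon_{n-1}$ is automatic from the recursive definition of $\Upsilon_n$. For the reverse, I would take an arbitrary $(u_0,u_1)\in \Upsilon_{n-1}\times \Upsilon_{n-1}$, project it via $\pi\times\pi$ to $(\pi(u_0),\pi(u_1))\in St_\mathcal{G}(n-1)\times St_\mathcal{G}(n-1)$, apply the previous step to produce some $g\in St_\mathcal{G}(n)$ with $\varphi(g)=(\pi(u_0),\pi(u_1))$, and choose any lift $\tilde g\in\Xi$ of $g$, which then automatically lies in $\Upsilon_n$. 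At this stage $\bar\varphi(\tilde g)$ agrees with $(u_0,u_1)$ only modulo $\Omega\times\Omega$, so one last adjustment by an element of $\Omega$ is needed.

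The main obstacle is precisely this adjustment, i.e.\ establishing the auxiliary equality $\bar\varphi(\Omega)=\Omega\times\Omega$ literally inside $\Gamma\times\Gamma$, not merely modulo something. The inclusion $\subseteq$ is automatic from $(\pi\times\pi)\circ\bar\varphi=\varphi\circ\pi$ and injectivity of $\varphi$. For $\supseteq$, equation \eqref{incik} supplies $\bar\varphi(u_i)=(1,u_{i-1})$ and $\bar\varphi(v_i)=(1,v_{i-1})$ for $i\geq 1$, and the coordinate-swap identity $\bar\varphi(x^a)=\tau(\bar\varphi(x))$ (where $\tau$ exchanges the two factors) gives the companions $(u_{i-1},1)$ and $(v_{i-1},1)$. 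Since each of the two coordinate projections of $\bar\varphi(\Xi)$ is already all of $\Gamma$, conjugation of these elements inside $\bar\varphi(\Xi)$ produces all of $\Omega\times\{1\}$ and $\{1\}\times\Omega$ within $\bar\varphi(\Omega)$, and their product is $\Omega\times\Omega$. With this in hand one picks $\omega\in\Omega$ with $\bar\varphi(\omega)=\bar\varphi(\tilde g)^{-1}(u_0,u_1)$, and $w=\tilde g\,\omega$ is the desired element of $\Upsilon_n$, completing the lemma.
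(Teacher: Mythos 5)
Your proof is correct and rests on exactly the two ingredients the paper uses: the cited fact $St_\mathcal{G}(3)\leq K$ together with the regular branch property $K\times K\leq \varphi(K)$. The paper's own proof is essentially a one-line citation of these facts and asserts "the remaining equalities" without further argument; your careful lifting from $\mathcal{G}\times\mathcal{G}$ to $\Gamma\times\Gamma$ via the auxiliary equality $\bar\varphi(\Omega)=\Omega\times\Omega$ (using equations \eqref{incik} and the coordinate swap induced by conjugation by $a$) supplies precisely the details the paper leaves implicit, and it checks out.
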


\begin{proof}
 The fact that $St_\mathcal{G}(3) \leq K$ is proven in \cite{MR1786869} (page 230). Therefore, since $\mathcal{G}$ is a regular 
branch group over $K$ (i.e $K \times K \leq \psi(K) $) we get the remaining equalities.
\end{proof}

\textbf{Proof of theorem 1.} We have to show that for $n \geq 3$ we have $$\Upsilon_n=\gp{u_1,\ldots,u_{n-3},v_0,\ldots,v_{n-4},w_n,t_ n}_{\Gamma}^\# $$

Now induction on $n$, equations (\ref{incik}) and the fact  $\bar \varphi (\Upsilon_n)=\Upsilon_{n-1} \times \Upsilon_{n-1}$ 
show that
$$\Upsilon_n= Ker(\bar \varphi)\gp{u_2,\ldots,u_{n-3},v_1,\ldots,v_{n-4},w_n,t_ n}_{\Gamma}^\# $$

but $Ker(\bar \varphi)=\Omega_1=\gp{u_1,v_0}_{\Gamma} ^\#$ from which we obtain

$$\Upsilon_n=\gp{u_1,\ldots,u_{n-3},v_0,\ldots,v_{n-4},w_n,t_ n}_{\Gamma}^\# $$

\subsection{Computation of Schur Multiplier of $\mathcal{G}_n$}

This section is devoted to the proofs of theorems 2 and 3.  The ideas are analogous to 
\cite{MR1676626} with slight modifications where needed.\hfill

\hfill

Let $F$ be the free group on $\{a,b,c,d\}$
and let $$K_n = \gpn{a^2,b^2,c^2,d^2,bcd,u_0,\ldots,u_{n-3},v_0,\ldots,v_{n-4},w_n,t_n} $$

so that by theorem 1 we have $F/K_n \cong \mathcal{G}_n$. As mentioned before, the Schur multiplier
can be calculated using Hopf's formula by:

$$M(\mathcal{G}_n )\cong K_n \cap F' /[K_n,F] $$

We have the following basic fact which will be used in the remainder:

\begin{lem}
 We have the following inclusions:
$$K_{n+1}\subset K_n$$ $$\sigma(K_n)\subset K_n$$  $$\sigma([K_n,F])\subset [K_n,F]$$
\end{lem}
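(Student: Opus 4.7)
My plan is to handle the three inclusions in the order stated. The third will follow formally from the second, so the real work lies in the first two, and the first is essentially a corollary of Theorem 1. Specifically, for $K_{n+1}\subseteq K_n$ I would use Theorem 1 to identify $K_n = \ker(F \to \mathcal{G}_n)$ and $K_{n+1} = \ker(F \to \mathcal{G}_{n+1})$, and then invoke the natural tree-level surjection $\mathcal{G}_{n+1} \to \mathcal{G}_n$ arising from restriction of automorphisms to the first $n$ levels.

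For $\sigma(K_n) \subseteq K_n$, since $K_n$ is normal in $F$ and $\sigma$ is an endomorphism, it suffices to show that $\sigma$ sends each normal generator of $K_n$ into $K_n$. The generators $b^2, c^2, d^2$ map to generators of $K_n$ directly; $\sigma(a^2) = (aca)^2$ lies in $\gpn{a^2, c^2}$ by a short calculation in the quotient where $a^2 = c^2 = 1$; and $\sigma(bcd) = dbc$ is the conjugate $(bc)^{-1}(bcd)(bc)$ of $bcd$, hence in $\gpn{bcd}$. For the Lysenok-type generators, $\sigma(u_i) = u_{i+1}$ and $\sigma(v_i) = v_{i+1}$: for $i$ small these are still among the listed generators of $K_n$, and at the boundary $u_{n-2}$ and $v_{n-3}$ are honest Lysenok relators of $\mathcal{G}$, so they are trivial in $\mathcal{G}$ and therefore in $\mathcal{G}_n$.

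The main obstacle is handling $\sigma(w_n) = w_{n+1}$ and $\sigma(t_n) = t_{n+1}$, since these are not Lysenok relators and have no direct reason to be in $K_n$. My plan is to exploit equation (\ref{incik}), which gives $\bar\varphi(w_k) = (1, w_{k-1})$ and $\bar\varphi(t_k) = (1, t_{k-1})$. Projecting through $\pi : \Gamma \to \mathcal{G}$, this shows that $\pi(w_k)$ lies in $St_\mathcal{G}(1)$ with sections $(1, \pi(w_{k-1}))$, so $\pi(w_k) \in St_\mathcal{G}(k-1)$ if and only if $\pi(w_{k-1}) \in St_\mathcal{G}(k-2)$. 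An induction on $k$ starting from the base case $w_3 = (ac)^4 \in St_\mathcal{G}(2)$, which I would verify by computing the first-level sections of $(ac)^4$ explicitly and noting that both lie in $St_\mathcal{G}(1)$, then yields $w_k \in St_\mathcal{G}(k-1)$ for all $k \geq 3$. In particular $w_{n+1} \in St_\mathcal{G}(n)$, so $w_{n+1} = 1$ in $\mathcal{G}_n$ and hence $w_{n+1} \in K_n$; the same argument with the analogous base case $t_3 \in St_\mathcal{G}(2)$ handles $t_{n+1}$.

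The third inclusion $\sigma([K_n, F]) \subseteq [K_n, F]$ is then formal: for any $k \in K_n$ and $f \in F$ we have $\sigma([k,f]) = [\sigma(k), \sigma(f)]$, a commutator of an element of $K_n$ (by the second inclusion) with an element of $F$, hence in $[K_n, F]$. Since $[K_n, F]$ is generated as a subgroup by such commutators, $\sigma$ preserves it.
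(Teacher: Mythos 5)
Your proof is correct, but on the one step that requires any thought you take a longer road than the paper. The paper handles the second inclusion by observing that $\sigma$ sends every normal generator of $K_n$ into $K_{n+1}$: in particular $\sigma(w_n)=w_{n+1}$ and $\sigma(t_n)=t_{n+1}$ are themselves listed generators of $K_{n+1}$, so the already-established inclusion $K_{n+1}\subset K_n$ finishes the argument with no computation at all. You instead aim directly at $\sigma(K_n)\subset K_n$, which forces you to justify $w_{n+1},t_{n+1}\in K_n$ from scratch; your induction via equation (\ref{incik}), showing that $\pi(w_k)$ and $\pi(t_k)$ lie in $St_{\mathcal{G}}(k-1)$, is valid, but it essentially re-derives information already packaged in Theorem 1 (from which $w_{n+1},t_{n+1}\in \Upsilon_{n+1}\subset\Upsilon_n$ is immediate), so the detour buys nothing. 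Your treatment of the remaining generators --- $\sigma(a^2)=(aca)^2$ reduced via $a^2,c^2$, $\sigma(bcd)=dbc$ as a conjugate of $bcd$, and the boundary cases $u_{n-2},v_{n-3}$ being Lysenok relators and hence trivial in the quotient $\mathcal{G}_n$ of $\mathcal{G}$ --- correctly fills in what the paper leaves implicit, and your formal derivation of the third inclusion from the second coincides with the paper's.
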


\begin{proof}
 The first inclusion follows from the fact that $\mathcal{G}_{n+1}$ maps onto $\mathcal{G}_n$. The images of generators of $K_n$ 
clearly lie in $K_{n+1}$ and hence the second inclusion follows from the first one. Finally the third inclusion follows
directly from the second.

\end{proof}

For computational reasons we need to change the relators in presentation of theorem 1 slightly to the ones
given in the next lemma. The rationale behind this will be apparent when we will do computations modulo the
subgroup $[K_n,F]$.

\begin{lem}
 $$K_n=\gpn{B_1,B_2,B_3,B_4,L,U_0,\ldots,U_{n-3},V_0,\ldots,V_{n-4},W_n,T_n}$$

where 
$$ B_1=a^2,B_2=b^2,B_3=c^2,B_4=bcd$$

$$L=b^2c^2d^2(bcd)^{-2} $$

$$ U_i=\sigma^i((ad)^4a^{-4}c^{-4}),V_i=\sigma^i((adacac)^4a^{-12}c^{-8}d^{-4})$$

$$ W_n=\sigma^{n-3}((ac)^4a^{-4}c^{-4}),T_n=\sigma^{n-3}((abac)^4a^{-8}b^{-4}c^{-4})$$
\end{lem}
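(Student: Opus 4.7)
The plan is to show that the two normal closures coincide by checking that each generating set is contained in the normal closure of the other. Since both sets contain $B_1, B_2, B_3, B_4 = a^2, b^2, c^2, bcd$, the task splits into comparing $d^2$ with $L$, and comparing each of $u_i, v_i, w_n, t_n$ with its modified counterpart $U_i, V_i, W_n, T_n$. The swap $d^2 \leftrightarrow L$ is immediate from the identity $L = b^2 c^2 d^2 (bcd)^{-2}$: this shows $L \in \gpn{B_2,B_3,B_4,d^2}_F$, and a rearrangement gives $d^2 = B_3^{-1} B_2^{-1} L\, B_4^{2} \in \gpn{B_2,B_3,B_4,L}_F$.

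For the iterated relators, observe that each modified relator differs from the original by the $\sigma^i$-image of a word in even powers of $a, b, c, d$; for instance $U_i = u_i \cdot \sigma^i(a^{-4} c^{-4})$ and, inverting, $u_i = U_i \cdot \sigma^i(c^{4} a^{4})$, with analogous expressions for the other three families. It therefore suffices to prove that $\sigma^i(a^{\pm 4})$, $\sigma^i(b^{\pm 4})$, $\sigma^i(c^{\pm 4})$, $\sigma^i(d^{\pm 4})$ all lie in $N := \gpn{a^2, b^2, c^2, d^2, bcd}_F$, because $N$ is contained in both the ``old'' and the ``new'' normal closure (the latter using the first step). Since $N$ is a normal subgroup and hence closed under taking squares, this reduces to showing $\sigma(N) \subseteq N$ and iterating.

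The main technical step, and the chief obstacle, is therefore verifying $\sigma(N) \subseteq N$ on the five generators of $N$. Three of these are trivial: $\sigma(b^2) = d^2$, $\sigma(c^2) = b^2$, $\sigma(d^2) = c^2$, all visibly in $N$. For $\sigma(bcd) = dbc$ one uses $dbc = (bc)^{-1}(bcd)(bc)$, exhibiting it as a conjugate of $bcd$. The remaining case $\sigma(a^2) = (aca)^2$ requires a short but careful free-group computation: starting from $(aca)^2 = a\, c\, a^2\, c\, a$ and inserting and cancelling $a^{\pm 1}, c^{\pm 1}$ one obtains an identity of the form $(aca)^2 = (a^2)^{c^{-1}a^{-1}} \cdot (c^2)^{a^{-1}} \cdot a^2$, which displays $(aca)^2$ as a product of conjugates of $a^2$ and $c^2$ and hence puts it in $N$. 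Once $\sigma(N) \subseteq N$ is established, both inclusions of normal closures follow by combining it with the identities from the first two paragraphs, completing the proof.
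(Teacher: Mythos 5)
Your proof is correct and follows essentially the same route as the paper: both directions of the inclusion come from observing that each modified relator differs from its original by a $\sigma^i$-image of a product of even powers of generators, and that these corrections lie in a $\sigma$-invariant normal subgroup contained in both normal closures. The only difference is one of detail: the paper cites $\sigma(K_n)\subseteq K_n$ from the preceding lemma and leaves the converse inclusion to the reader, whereas you verify $\sigma$-invariance directly on the generators of the smaller subgroup $\gpn{a^2,b^2,c^2,d^2,bcd}$, including the explicit identity writing $(aca)^2$ as a product of conjugates of $a^2$ and $c^2$.
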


\begin{proof}
 Let $K_n'$ be the subgroup in the lemma. Clearly $B_i,L \in K_n$. Also since $U_0,V_0 \in K_n$ and
$\sigma(K_n)\subset K_n$ we see that $U_i,V_i$ are elements of $K_n$. We have 

$$W_n=w_n \sigma^{n-3}(a^{-4}c^{-4}) \in K_n  $$
similarly $T_n \in K_n$.

The converse inclusion can be shown similarly using $\sigma(K_n')\subset K_n'$.
\end{proof}

Let $\ap$ denote equivalence modulo $[K_n,F]$.

\begin{lem}
In the group $K_n / [K_n,F]$ we have the equalities $$L^2\ap U_i^2\ap V_i^2 \ap  W_n^2\ap T_n^2\ap1 $$

\end{lem}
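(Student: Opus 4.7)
The plan combines a general squaring identity modulo $[K_n,F]$ with explicit commutator computations, together with $\sigma$-propagation for the higher indices.

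First, I would verify the squaring identity: for $u\in F$ and $v\in K_n$, the free-group equation $(uv)^2 = u^2\cdot v^u\cdot v$ together with $v^u = v\cdot [v,u]$ and $[v,u]\in[K_n,F]$ gives $(uv)^2 \equiv u^2 v^2 \pmod{[K_n,F]}$. Iterating, $(r_1 r_2\cdots r_k)^2\equiv r_1^2 r_2^2\cdots r_k^2 \pmod{[K_n,F]}$ for any $r_1,\ldots,r_k\in K_n$. Applied to each of $L, U_0, V_0, W_3, T_3$ by factoring into products of basic $K_n$-elements (e.g.\ $L = b^2\cdot c^2\cdot d^2\cdot (bcd)^{-1}\cdot (bcd)^{-1}$), this gives reductions such as $L^2\equiv b^4c^4d^4(bcd)^{-4}$, $U_0^2\equiv (ad)^8 a^{-8} d^{-8}$, $W_3^2\equiv (ac)^8 a^{-8} c^{-8}$, all modulo $[K_n,F]$, reducing the problem to showing each such ``doubled'' expression lies in $[K_n,F]$.

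The key auxiliary tool is the commutator-squaring trick: if $x,y\in F$ satisfy $[x,y]\in K_n$ and $y^2\in K_n$, then the identity $[x,y^2]=[x,y]\cdot[x,y]^y$ together with $[x,y]^y\equiv[x,y]\pmod{[K_n,F]}$ gives $[x,y^2]\equiv 2[x,y]\pmod{[K_n,F]}$, while $[x,y^2]\in[F,K_n]=[K_n,F]$, yielding $[x,y]^2\equiv 1\pmod{[K_n,F]}$. For the $L$ case, Hall's collection formula gives $(bcd)^4\equiv b^4 c^4 d^4\cdot [c,b]^6[d,b]^6[d,c]^6\pmod{F_3}$. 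Since $b,c,d$ pairwise commute in $\mathcal{G}_n$ (from the relators $b^2,c^2,d^2,bcd$), the commutators $[c,b],[d,b],[d,c]$ lie in $K_n$ and their even powers vanish by the trick; the residual $F_3$-term is a product of iterated commutators whose inner commutator is in $K_n$, hence lies in $[K_n,F]$. Analogous computations handle $U_0, V_0, W_3, T_3$, with the relevant pair-commutator in each case (e.g.\ $[a,d]$ for $U_0$) having its square landing in $K_n$ via the corresponding $4$th-power relator, so the commutator trick applies one level deeper.

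Finally, for $i\ge 1$ and $n\ge 4$ the substitution $\sigma$ maps each generator of $K_n$ to a generator of $K_{n+1}$, so $\sigma(K_n)\subseteq K_{n+1}$ and $\sigma([K_n,F])\subseteq[K_{n+1},F]$. Combined with $U_i=\sigma(U_{i-1})$, $V_i=\sigma(V_{i-1})$, $W_{n+1}=\sigma(W_n)$, $T_{n+1}=\sigma(T_n)$, the higher-index cases follow by induction on depth. The main technical obstacle is the careful tracking of higher commutators produced by the Hall expansions, especially for cases like $U_0, W_3$ where the relevant pair-commutator itself (e.g.\ $[a,d]$) is not in $K_n$ and only its square is; handling these requires iterating the commutator trick and invoking the specific $4$th-power relator to make the inner term land in $K_n$, which is the computational heart of the argument.
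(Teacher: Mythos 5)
Your treatment of $L$ is sound and close in spirit to the paper's (which reduces $L$ to $[d,c][d,b][c,b]$ and kills squares via $1\ap[x^2,y]\ap[x,y]^2$), and the $\sigma$-propagation to higher indices is exactly how the paper handles $U_i,V_i$ for $i\geq 1$, using $\sigma([K_n,F])\subset[K_n,F]$. But for the four hard families $U_0,V_0,W_n,T_n$ your proposal stops precisely where the real work begins, and the route you sketch has a circularity problem. After Hall collection you are left with, e.g., $U_0^2\ap[d,a]^{28}\cdot(\text{weight}\geq 3\text{ terms})$, and here $[d,a]\notin K_n$; only its square is. But $[d,a]^2$ is, up to central factors $a^{\pm2},d^{\pm2}$, the element $(ad)^4 a^{-4}d^{-4}$ itself --- i.e.\ essentially $U_0$. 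So ``the commutator trick applies one level deeper'' amounts to asserting that the square of this $K_n$-element dies modulo $[K_n,F]$, which is exactly the statement being proved. Breaking the circle needs an extra identity you do not supply: from $[d,a^2]\in[K_n,F]$ one gets $[d,a]^2\ap[[d,a],a]^{-1}$, one must then check $[[d,a],a]\in K_n$ (a relation in $\mathcal{G}_n$ that has to be verified), and only then does the squaring trick finish $[d,a]^4\ap1$. The same gap recurs, considerably worse, for $V_0=(adacac)^4a^{-12}c^{-8}d^{-4}$ and $T_n$, where the base word has length six in three letters and there is no single ``relevant pair-commutator'' to point at; collecting $(adacac)^8$ and controlling all the resulting basic commutators is a substantial computation that the proposal does not attempt.

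The paper sidesteps all of this with a different and much shorter device: it rewrites each square directly as a single commutator of a free generator with a (conjugate of a) defining relator, using only that elements of $K_n$ are central modulo $[K_n,F]$. Concretely, $U_0^2\ap[a,(ad)^4]$, $W_n^2\ap[\sigma^{n-3}(a),\sigma^{n-3}((ac)^4)]$, and $V_0^2$, $T_n^2$ are conjugate to $[a,(acadac)^4]$ and $[\sigma^{n-3}(d),\sigma^{n-3}((baca)^4)]$ respectively; each of these lies in $[F,K_n]=[K_n,F]$ at once. If you want to salvage your Hall-collection route you must add the missing chain $[d,a]^2\ap[[d,a],a]^{-1}$, $[[d,a],a]\in K_n$, $[[d,a],a]^2\ap1$ (and its analogues for $[a,c]$, $[a,b]$ and for the longer words), and verify that every weight-$\geq3$ correction term either has even exponent or already sits inside $[K_n,F]$; as written, the argument for $U_0,V_0,W_n,T_n$ is not there.
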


\begin{proof}

Observe that $x^2,[x,y] \in K_n$ where $x,y \in \{b,c,d\}^\pm$. Also
$$1\ap[x^2,y]=[x,y]^x[x,y]\ap[x,y]^2 $$

hence
$$L=b^2c^2d^2d^{-1}c^{-1}b^{-1}d^{-1}c^{-1}b^{-1}\ap dcbd^{-1}c^{-1}b^{-1}\ap dcb[d,c]c^{-1}d^{-1}b^{-1}$$
$$\ap [d,c]dcbc^{-1}[d,b]b^{-1}d^{-1}\ap [d,c][d,b]d[c,b]d^{-1}\ap[d,c][d,b][c,b] $$

therefore $L^2\ap 1$.

$$\begin{array}{lll}
 U_0^2 & =   & (ad)^4a^{-4}d^{-4}(ad)^4a^{-4}d^{-4} \\
       & \ap & a^{-1}d^{-1}a^{-1}d^{-1}a^{-1}d^{-1}a^{-1}d^{-1}a^{-1}aadadadad \\
       & =   & [a,(ad)^4] \ap 1
\end{array}$$
\hfill

$V_0$ is conjugate via $adac$ to the product:
$$\begin{array}{ll}
      acabacacabacacabacaca^{-12}c^{-8}d^{-4}
abacacabacacabacacabacaca^{-12}c^{-8}d^{-4}abac & \ap \\ 
a^{-1}c^{-1}a^{-1}d^{-1}a^{-1}c^{-1}a^{-1}c^{-1}a^{-1}d^{-1}a^{-1}c^{-1}a^{-1}c^{-1}a^{-1}d^{-1}a^{-1}c^{-1}a^{-1}c^{-1}a^{-1}& \\
d^{-1}a^{-1}c^{-1}a^{-1}aacabacaacabacaacabacaacabac  & \\
= [a,(acadac)^4]\ap 1 &       
\end{array}$$

Where the last equality is true since $(acadac)^4$ is a conjugate of $(adacac)^4 \in K_n$.\hfill

\hfill

For $W_n$, a similar calculation (like the one for $U_0$) gives the following:

$$W_n^2 \ap [\sigma^{n-3}(a),\sigma^{n-3}((ac)^4)]\ap 1 $$

Also similar computation (like the one for $V_0$) shows that $T_n^2$ is conjugate to

$$[\sigma^{n-3}(d),\sigma^{n-3}((baca)^4)]\ap 1$$
\end{proof}

\begin{lem}
 In $K_n / [K_n,F]$ we have $\gp{B_1,B_2,B_3,B_4}\cong C^4$ where $C$ denotes the infinite cyclic group.
\end{lem}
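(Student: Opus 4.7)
The plan is to exploit the natural homomorphism $K_n/[K_n,F] \longrightarrow F/F'$ induced by the inclusion $K_n \hookrightarrow F$. This map is well defined because $[K_n,F] \subseteq [F,F] = F'$, and the domain $K_n/[K_n,F]$ is already abelian since $[K_n,K_n] \subseteq [K_n,F]$. Its kernel is precisely $(K_n \cap F')/[K_n,F] \cong M(\mathcal{G}_n)$ by Hopf's formula, so to establish that $\langle B_1,B_2,B_3,B_4 \rangle \cong C^4$ inside $K_n/[K_n,F]$ it suffices to check that the images of $B_1,\dots,B_4$ are $\mathbb{Z}$-linearly independent after projection to $F/F' \cong \mathbb{Z}^4$.

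Reading off abelianizations (with coordinates indexed by $a,b,c,d$), the generators map as
\[ B_1 = a^2 \mapsto (2,0,0,0), \quad B_2 = b^2 \mapsto (0,2,0,0), \quad B_3 = c^2 \mapsto (0,0,2,0), \quad B_4 = bcd \mapsto (0,1,1,1). \]
The resulting $4 \times 4$ matrix is upper triangular up to a permutation, with diagonal entries $2,2,2,1$, hence determinant $8 \neq 0$. Therefore the four images are $\mathbb{Z}$-linearly independent in $F/F'$ and generate a free abelian subgroup of rank $4$ there.

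To conclude, observe that $\langle B_1,B_2,B_3,B_4 \rangle$ is a subgroup of the abelian group $K_n/[K_n,F]$, so it is itself an abelian group with at most four generators. It surjects onto a free abelian group of rank $4$ in $F/F'$, and any such abelian group on four generators must be isomorphic to $\mathbb{Z}^4$ (the surjection splits and a count of generators is forced to be tight). Hence $\langle B_1,B_2,B_3,B_4 \rangle \cong C^4$, as claimed. There is essentially no obstacle here: once one recognizes that independence in $K_n/[K_n,F]$ can be tested modulo the torsion subgroup $M(\mathcal{G}_n)$, the entire lemma reduces to the trivial linear algebra fact that the four abelianization vectors above are independent in $\mathbb{Z}^4$.
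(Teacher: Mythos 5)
Your proposal is correct and follows essentially the same route as the paper: project $K_n/[K_n,F]$ to $F/F'\cong C^4$, read off the abelianization vectors $(2,0,0,0),(0,2,0,0),(0,0,2,0),(0,1,1,1)$, and conclude from their linear independence. The only difference is cosmetic --- you spell out the final step (a $4$-generated abelian group surjecting onto a free abelian group of rank $4$ is itself free of rank $4$), which the paper leaves implicit.
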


\begin{proof}
 We have the quotient map 
$$ K_n / [K_n,F] \longrightarrow K_n/(K_n \cap F')$$

and the right hand side is a free abelian group since
$$ K_n/(K_n \cap F') \cong K_nF'/F' \leq F/F'\cong C^4 $$

Now $B_1,B_2,B_3,B_4$ are mapped onto the vectors 
$$(2,0,0,0),(0,2,0,0),(0,0,2,0),(0,1,1,1)$$ 

respectively. Linear independence of these vectors proves the assertion.
\end{proof}

\begin{lem}
 We have the following isomorphism:
$$ K_n / [K_n,F] \cong  C^4 \times M_n$$

where $C^4$ is freely generated by $B_1,B_2,B_3,B_4$ and is isomorphic to $ K_n / (K_n \cap F')$

and $M_n$ is the torsion part generated by $\{L,U_i,V_i,W_n,T_n\}$, which is an elementary abelian 2-group isomorphic
to $(K_n\cap F') /[K_n,F] $.\label{ilkbol}
\label{basit}
\end{lem}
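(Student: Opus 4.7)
The plan is to exhibit the required decomposition as a split short exact sequence of abelian groups and then identify each factor using the preceding lemmas.

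First I would look at the natural surjection
$$K_n/[K_n,F] \longrightarrow K_n/(K_n\cap F'),$$
whose kernel is $(K_n\cap F')/[K_n,F]$. The target embeds into $F/F' \cong C^4$ and, by the previous lemma, actually equals $C^4$, freely generated by the images of $B_1,B_2,B_3,B_4$. Since the quotient is free abelian, the sequence splits, yielding
$$K_n/[K_n,F] \;\cong\; C^4 \times (K_n\cap F')/[K_n,F],$$
with the $C^4$ factor realized concretely as the subgroup generated by $B_1,B_2,B_3,B_4$ inside $K_n/[K_n,F]$.

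Next I would identify $M_n := (K_n\cap F')/[K_n,F]$. Because $K_n$ is the normal closure in $F$ of the finite set $\{B_1,B_2,B_3,B_4,L,U_i,V_i,W_n,T_n\}$, and because conjugation in $F$ is trivial modulo $[K_n,F]$, the abelian group $K_n/[K_n,F]$ is generated by the images of these eleven elements. By construction each of $L,U_i,V_i,W_n,T_n$ lies in $F'$ (the correction factors $a^{-4}c^{-4}$, etc., were inserted precisely to kill the abelianization contribution), so under the splitting the $M_n$-factor is generated by the images of $L, U_0,\ldots,U_{n-3}, V_0,\ldots,V_{n-4}, W_n, T_n$. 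The previous lemma established that each of these squares to the identity modulo $[K_n,F]$, so $M_n$ is an abelian group generated by finitely many involutions, i.e.\ an elementary abelian $2$-group.

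There is no genuine obstacle at this step: the delicate commutator manipulations showing $L^2,U_i^2,V_i^2,W_n^2,T_n^2 \approx 1$, and the linear independence of the images of $B_1,\ldots,B_4$ in $F/F'$, have already been done in the preceding lemmas. The only point deserving attention is the reduction from a normal-closure generating set for $K_n$ to an abelian-group generating set for $K_n/[K_n,F]$ via conjugate-collapse; once that is granted, the direct product decomposition together with the description of $M_n$ follows formally, completing the proof.
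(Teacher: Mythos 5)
Your proposal is correct and follows essentially the same route as the paper: both arguments use the split exact sequence $1 \rightarrow (K_n\cap F')/[K_n,F] \rightarrow K_n/[K_n,F] \rightarrow K_n/(K_n\cap F') \rightarrow 1$, identify the free abelian quotient of rank $4$ via the preceding lemma on $B_1,\ldots,B_4$, and conclude that the kernel is elementary abelian of exponent $2$ from the lemma giving $L^2\approx U_i^2\approx V_i^2\approx W_n^2\approx T_n^2\approx 1$. You merely make explicit two points the paper leaves implicit, namely that $K_n/[K_n,F]$ is generated by the images of the listed relators and that $L,U_i,V_i,W_n,T_n$ all lie in $F'$, so the torsion factor is generated by exactly those elements.
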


\begin{proof}
 We have the split exact sequence 
$$ 1 \rightarrow(K_n\cap F') /[K_n,F] \rightarrow K_n / [K_n,F] \rightarrow K_n / (K_n \cap F') \rightarrow 1$$

From previous lemma we have $ K_n / (K_n \cap F') \cong C^4$. Hence $$M_n\cong (K_n\cap F') /[K_n,F] $$ and is
elementary abelian 2-group by lemma 6.
\end{proof}

\begin{lem}
 The elements $L,U_0,W_3,T_3 $ are independent in $M_3$.
\end{lem}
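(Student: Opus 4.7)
Since Lemma~7 already produces the upper bound --- $M_3$ is an elementary abelian $2$-group generated by $L,U_0,W_3,T_3$ --- the assertion is equivalent to $\dim_{\field{F}_2}M_3=4$. By Hopf's formula $M_3\cong M(\mathcal{G}_3)$, and (via universal coefficients) each $\field{F}_2$-valued linear functional on $M_3$ arises as the transgression of some central extension
$$1\to C_2\to E\to\mathcal{G}_3\to 1.$$
The plan is therefore to exhibit four such central extensions $E_1,\ldots,E_4$ for which the resulting $4\times 4$ matrix of transgression values on $\{L,U_0,W_3,T_3\}$ is invertible over $\field{F}_2$.

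Each extension is realised concretely by choosing a small finite $2$-group $H_i$ with a central involution $z_i$ together with a homomorphism $\tilde\pi_i\colon F\to H_i$ that lifts $\pi\colon F\to\mathcal{G}_3$, sends the target relator to $z_i$, and kills all the other defining relators of $\mathcal{G}_3$. For $U_0$ the natural choice is an extension in which $ad$ is lifted to an element of order $8$ (a dihedral thickening of the subgroup $\gp{a,d}$) while $(ac)^4$ and $(abac)^4$ remain trivial. Symmetrically, $W_3$ is detected by an extension in which $ac$ acquires order $8$, and $T_3$ by one in which $abac$ acquires order $8$. For $L$, the identity $L\ap [d,c][d,b][c,b]$ established inside the proof of Lemma~6 indicates that $L$ is detected by an extension in which the Klein $4$-group $\gp{b,c}$ lifts to a non-abelian central extension of $C_2\times C_2$ by $C_2$ (e.g.\ $D_8$), so that the product of commutators $[d,c][d,b][c,b]$ becomes the non-trivial central element $z_i$.

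The main obstacle is not the transgression computation --- once the extensions are available, the values are immediate --- but the construction of each $\tilde\pi_i$ itself: $\mathcal{G}_3$ satisfies many relations beyond the eight listed in Theorem~1, so one must verify that ``thickening'' one chosen relator does not force the collapse of any of the others. A systematic way to bypass this is to build the $H_i$ as carefully chosen subgroups of iterated wreath products such as $(D_{16}\wr C_2)\wr C_2$, exploiting the self-similar structure of $\mathcal{G}_3$ captured by $\bar\varphi$. Since $|\mathcal{G}_3|=128$, every such verification takes place in a group of order at most $2^8$ and can be carried out directly by hand; the resulting matrix can be arranged to be upper-triangular with nonzero diagonal, which gives the required linear independence.
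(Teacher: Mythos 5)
Your overall strategy is legitimate and genuinely dual to the paper's: instead of mapping $M_3$ \emph{onto} a computable quotient (the paper uses $Q_3=F'/([K_3,F]\gamma_5(F)F^{(2)})$, where commutator calculus shows the images of $L,U_0,W_3,T_3$ are $[b,c],[a,d]^2,[a,c]^2,[a,b,c]^{-2}$ and hence independent), you propose to separate the four elements by \emph{functionals}, realized as central extensions $1\to C_2\to H_i\to\mathcal{G}_3\to 1$ with lifts $\tilde\pi_i\colon F\to H_i$. Your reduction to $\dim_{\field{F}_2}M_3=4$ is correct, and so is the principle that every homomorphism $M_3\to C_2$ arises from such an extension (independently of the choice of lift, since two lifts differ by a homomorphism $F\to C_2$, which kills $K_3\cap F'$).

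The gap is that you never actually produce the extensions. You correctly identify that the entire content of the argument lies in constructing, for each target relator, a $2$-group $H_i$ of order $256$ in which the images of \emph{all} of $a^2,b^2,c^2,d^2,bcd,u_0,w_3,t_3$ are central involutions (so that $\tilde\pi_i(K_3)$ is central elementary abelian and the functional is well defined on $K_3/[K_3,F]$), with the chosen relator hitting $z_i$ --- and then you defer exactly that step with ``can be carried out directly by hand.'' Nothing is carried out: no $H_i$ is specified beyond a phrase like ``a dihedral thickening of $\gp{a,d}$,'' no verification is given that, say, forcing $(ad)^4\mapsto z$ is compatible with $bcd\mapsto 1$ and $(ac)^4,(abac)^4$ mapping to central involutions, and the claimed upper-triangular matrix of values is asserted rather than exhibited. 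The existence of these four extensions is logically equivalent to the statement being proved ($\dim M_3=4$), so until at least one row of the matrix is computed from an explicitly constructed $H_i$, the argument is circularly deferred rather than complete. Note also a point that would bite you in the execution: the relevant elements are $U_0=(ad)^4a^{-4}d^{-4}$, $W_3=(ac)^4a^{-4}c^{-4}$, $T_3=(abac)^4a^{-8}b^{-4}c^{-4}$ rather than the bare fourth powers, so your transgression values must account for the images of $a^4$, etc., whenever a generator lifts to an element whose square is the central involution.
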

\begin{proof}
 
Clearly $M_3$ maps to the abelian group 

$$Q_3=F'/([K_3,F]\gamma_5(F)F^{(2)}) $$ 

The result follows from the next lemma.
\end{proof}

\begin{lem}
$Q_3$ has the following presentation:\hfill

\hfill

\label{uuu}

Generators:

 \begin{itemize}
 \item $[a,b],[a,c],[a,d],[b,c]$
\item $[a,x,y],\quad  x\neq y, \quad x,y\in\{b,c,d\}$
\item $[a,x,y,z],\quad  x\neq y,\quad y \neq z$ and $(x,y,z)$ is not a permutation of $(b,c,d)$ where
$x,y \in \{b,c,d\}$ and $z \in\{a,b,c,d\}$
\end{itemize}

Relations:

\begin{itemize}
 \item commutativity relations
 \item $[a,b]^8=[a,c]^4=[a,d]^4=[b,c]^2=1$
 \item $[a,b,c]^4=[a,b,d]^4=[a,c,b]^2=[a,c,d]^2=[a,d,b]^2=[a,d,c]^2=1$
 \item $[a,x,y,z]^2=1$
\end{itemize}

Moreover the the images of $L,U_0,W_3,T_3$ in $Q_3$ are $[b,c],[a,d]^2,[a,c]^2,[a,b,c]^{-2}$ respectively.

\end{lem}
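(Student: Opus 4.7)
My plan is to verify the asserted presentation by a direct computation in the free metabelian nilpotent-of-class-$4$ quotient $\bar F = F / \gamma_5(F) F''$, where $F$ is free on $\{a,b,c,d\}$. Since $F''$ is modded out, the commutator subgroup $\bar F'$ is abelian, and Hall's basis theorem provides a finite $\mathbb Z$-module generating set for it consisting of basic commutators of weights $2$, $3$, and $4$. The group $Q_3$ is then the quotient of $\bar F'$ by the image of $[K_3, F]$, generated by the $32$ commutators $[k, x]$ with $k \in \{B_1, B_2, B_3, B_4, L, U_0, W_3, T_3\}$ and $x\in\{a,b,c,d\}$. The task is to check that reducing this presentation yields exactly the one in the lemma.

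The core calculation is the explicit expansion of each $[k, x]$ modulo $\gamma_5(F) F''$, using $[uv,x] = [u,x]^v[v,x]$, $[u,x]^v \equiv [u,x]\,[[u,x],v]$, and the Hall--Petresco formula for $[u^n, x]$ and $(uv)^n$. The square relators $B_1=a^2$, $B_2=b^2$, $B_3=c^2$ produce the order-$2$ relations on weight-$4$ commutators and relate weight-$2$ and weight-$3$ squares to strictly higher-weight commutators. The relator $B_4=bcd$ eliminates, at each weight, the basic commutators containing $d$ in favor of those in the asserted list (and in particular forces $[a,c,d]$, $[a,d,b]$, etc.\ into the listed normal form). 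The relator $L$ forces $[b,c]^2 = 1$. The relators $U_0, W_3, T_3$ are Hall--Petresco remainders of $(ad)^4, (ac)^4, (abac)^4$, so modulo $\gamma_3(F) F''$ their leading terms are known integer multiples of $[a,d], [a,c], [a,b,c]$; commuting each with $a, b, c, d$ produces the stated torsion relations $[a,b]^8 = [a,c]^4 = [a,d]^4 = 1$, $[a,b,c]^4 = [a,b,d]^4 = 1$, and the order-$2$ relations on the remaining weight-$3$ commutators.

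A final round of Tietze transformations discards the now-redundant generators (weight-$2$ or weight-$3$ commutators not in the asserted list, using the $B_4$-relations and Jacobi identities valid in $\bar F'$) and gives the precise presentation in the statement. The images of $L, U_0, W_3, T_3$ in $Q_3$ are read off directly from the leading-order expansions, using the just-established torsion to simplify: $L \mapsto [b,c]$ (from $L \equiv [d,c][d,b][c,b]$ followed by the $B_4$-reductions), $U_0 \mapsto [a,d]^{-6} \equiv [a,d]^2$ (using $[a,d]^4=1$), $W_3 \mapsto [a,c]^{-6} \equiv [a,c]^2$, and $T_3 \mapsto [a,b,c]^{-2}$. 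The hardest part will be the disciplined bookkeeping of the weight-$3$ and weight-$4$ correction terms, since a single sign or binomial-coefficient slip in a Hall--Petresco expansion would yield wrong torsion orders or the wrong image class; I would cross-check by verifying that the rank of the resulting abelian group is compatible with the bound on $M_3$ forced by Lemma \ref{basit} together with the dimension count to be completed in the surrounding argument.
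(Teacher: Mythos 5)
Your plan is essentially the paper's own proof: the paper likewise presents $Q_3 = F'/([K_3,F]\gamma_5(F)F^{(2)})$ by basic commutators of weight at most $4$, imposes the relations $[x,k]\approx 1$ for the normal generators $k$ of $K_3$, and reduces by hand with the identities $[x,yz]=[x,z][x,y][x,y,z]$ and $[xy,z]=[x,z][x,y,z][y,z]$ --- your Hall-basis/Hall--Petresco packaging is the same computation in more systematic dress, and your claimed images of $L,U_0,W_3,T_3$ agree with the paper's. One caution as you execute it: the relation subgroup is the full image of $[K_3,F]$, so it is generated not only by the $32$ elements $[k,x_j]$ but also by the iterated commutators $[[k,x_j],x_l]$ and $[[[k,x_j],x_l],x_m]$ (these are again of the form $[k',x]$ with $k'\in K_3$, and only finitely many survive modulo $\gamma_5(F)$); your reduction in fact produces and uses such elements when deriving the weight-$3$ and weight-$4$ torsion, but the final check that the listed relations generate everything --- which is what the independence of $L,U_0,W_3,T_3$ actually requires --- must account for them explicitly rather than for the $32$ first-level commutators alone.
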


\begin{proof} (Throughout this proof $\ap$ denotes equivalence modulo $([K_3,F]\gamma_5(F)F^{(2)})$)\hfill

\hfill

 Since $a^2,b^2,c^2,d^2 \in K_3$, using standard commutator calculus and the fact that 
$\gamma_5(F)$ appears in the denominator of $Q_3$, it is easy to see that $Q_3$ is generated
by elements of the form

\begin{itemize}
 \item $[x,y],\quad x\neq y.\quad x,y \in \{a,b,c,d\}$
 \item $[x,y,z],\quad x\neq y , x,y,z \in \{a,b,c,d\}$
 \item $[x,y,z,w]. \quad x\neq y, \quad x,y,z,w \in \{a,b,c,d\}  $
\end{itemize}

Before beginning calculations, we wish to write two equalities which will be frequently used 
in the remainder:

\begin{equation}
 [x,yz]=[x,z][x,y][x,y,z]
\label{1}
\end{equation}

\begin{equation}
 [xy,z]=[x,z][x,y,z][y,z]
\label{2}
\end{equation}

Clearly, in $Q_3$ we have the following relations:

$$[x,B_i]=[x,L]=[x,U_0]=[x,W_3]=[x,T_3]=1 \: x\in\{a,b,c,d\} $$

Using these we will further reduce the system of generators.\hfill 

\hfill

Firstly, from equation (\ref{1}) we have:

\begin{equation*}
  [x,a]^2[x,a,a]=[x,a^2] \ap 1, \quad x\in\{b,c,d\}
\end{equation*}
Hence 
\begin{equation}
 [x,a,a]\ap[a,x]^2 
\label{aa}
\end{equation}

and we can omit the generators $[x,a,a]$ where $x\in\{b,c,d\}$.\hfill

Since 
\begin{equation}
 [x,a,y]=[a,x][y,[a,x]][x,a]\ap[y,[a,x]]=[a,x,y]^{-1} 
\label{3lu}
\end{equation}

we also can omit generators $[a,x,a]$ and $[x,a,y]$ where $x,y \in \{b,c,d\}$.\hfill

\hfill

Next, again using equation (\ref{1})  we have

\begin{equation*}
 [x,y]^2[x,y,y]=[x,y^2]\ap1\quad x\in \{a,b,c,d\} , y \in \{b.c.d\}
\end{equation*}

Hence
\begin{equation}
 [x,y,y]\ap [y,x]^2 
\label{yy}
\end{equation}

and therefore the generators $[a,y,y]$ where $y\in\{b,c,d\}$ can be omitted.\hfill

\hfill

Since $[x,y] \in K_3$ for $x,y \in \{b,c,d\}$, we also omit generators of the form $[x,y,a]$.\hfill

\hfill

Using equations (\ref{3lu}),(\ref{aa}) and (\ref{2}) we have 
$$[a,x,a,y]\ap[[x,a,a]^{-1},y]\ap[[a,x]^{-2},y]\ap[[a,x]^{-1},y]^2=[x,a,y]^2$$

which enables us to omit generators of the form $[a,x,a,y]$ where $x\in\{b,c,d\}$ and  $y\in\{a,b,c,d\}$.\hfill

\hfill

Similarly, using equations (\ref{yy}) and (\ref{2}) we have 
$$[a,x,x,z]\ap[[x,a]^2,z]\ap[[x,a],z]^2=[x,a,z]^2$$

which enables us to omit generators of the form $[a,x,x,z]$ where $x\in\{b,c,d\}$ and where $z\in\{a,b,c,d\}$.  \hfill

\hfill

The  following equation holds:

\begin{eqnarray}
 [x,yzt] &=&  [x,zt][x,y][x,y,zt]     \nonumber \\
   &=& [x,t][x,z][x,z,t][x,y][x,y,t][x,y,z][x,y,z,t]
\label{uzun}
\end{eqnarray}

Substituting $x=a$ yields the omission of the generators of the form $[a,y,z,t]$ where $(y,z,t)$ is a permutation
of $(b,c,d)$.\hfill

\hfill

Similarly, substituting different letters into equation (\ref{uzun}) we get the following identities:
$$1\ap[b,bcd]\ap[b,c][b,d]$$
$$1\ap[c,bcd]\ap[c,d][c,b]$$
$$1\ap[d,bcd]\ap[d,b][d,c]$$

Which yield the identities $[c,d]\ap[b,c]\ap[d,b]$. Thus $[c,d]$ and $[b,d]$ can be omitted from the system of generators.\hfill

\hfill

Hence $Q_3$ has the asserted set of generators. We proceed to showing it has the given relators.\hfill

\hfill

Using equation (\ref{yy}) we get $[b,c]^2\ap1$.\hfill

\hfill

Using similar calculations as before we get :

$$1\ap[a,(ad)^4]\ap[a,d]^4 $$
$$1\ap[a,(ac)^4]\ap[a,c]^4 $$
$$1\ap[a,(ab)^8]\ap[a,b]^8 $$

We have 
$$ 1\ap[x,(ad)^4]=[x,[a,d]^2]=[x,[a,d]]^2[[x,ad],[a,d]]\ap[x,[a,d]]^2\ap[a,d,x]^{-2}  $$
hence $[a,d,b]^2=[a,d,c]^2\ap1$\hfill

\hfill

Similarly 
$$1\ap[x,(ac)^4]\ap[a,c,x]^{-2} $$
$$1\ap[x,(ab)^8]\ap[a,b,x]^{-4} $$

yield $[a,c,b]^2\ap[a,c,d]^2\ap 1 $ and $[a,b,c]^4\ap[a,b,d]^4\ap1$\hfill

\hfill

Finally 
$$1\ap[a,x,y,z^2]\ap[a,x,y,z]^2[a,x,y,z,z]\ap[a,x,y,z]^2$$
where $x,y\in\{b,c,d\}$ and $a\in \{a,b,c,d\}$.\hfill

\hfill

Let us show that the images of $L.U_0,W_3,T_3$ in $Q_3$ are $[b,c],[a,d]^2,[a,c]^2,[a,b,c]^{-2}$ respectively:\hfill

\hfill

By Lemma 6, $L\ap[d,c][d,b][c,b]\ap[b,c]$\hfill

\hfill

Also similar to earlier computations we have;
$$U_0 \ap [a,d]^2 $$
$$W_3 \ap [a,c]^2 $$

$$
\begin{array}{ccl}
 T_3 & = & (abac)^4a^{-8}b^{-4}c^{-4}=abacabacabacabacabaca^{-8}b^{-4}c^{-4} \\
 & \ap &                             [a,b]b^{-1}c^{-1}a^{-1}bac[a,b]b^{-1}c^{-1}a^{-1}bac\\
  & =&                              ([a,b][b,ac])^2 \\
&= &                                ([a,b][b,c][b,a][b,a,c])^2 \\
&\ap&                                [b,a,c]^2\ap [a,b,c]^{-2} (\text{by equation  \ref{3lu}} )\\

\end{array}
$$

Therefore 
$$1\ap[a,T_3]\ap[a,[a,b,c]^{-2}]\ap[a,[a,b,c]]^{-2}\ap[a,b,c,a]^2 $$

Similarly 
$$1\ap[x,T_3]\ap[x,b,c,a]^2 $$

Now the following argument finishes the proof of the lemma:\hfill

\hfill

It is clear that all relations of $Q_3$ can be derived from the relations 

$$[x,B_i]=[x,L]=[x,U_0]=[x,W_3]=[x,T_3]=1, x\in\{a,b,c,d\} $$

together with relations of the form $y=1$ where $y\in \gamma_5(F)$ or $y\in F^{(2)}$.\hfill

\hfill

The relations $y\in \gamma_5(F)$ can be disregarded by omitting the generators of commutator length 5 or more
from the generating system. The relations $y\in F^{(2)}$ translate to commutativity relations among generators. 
Finally above computations show that one can further reduce the generating set to the one asserted in the lemma.
Also the calculations imply that the relations are equivalent to the system of relators given in the lemma.
Hence $Q_3$ has the given presentation, and clearly $L,U_0,W_3,T_3$ are independent in $Q_3$.

\end{proof}

\begin{lem}
 The elements $L,U_0,U_1,V_0 $ are independent in $M_n$ where $n\geq 4$.
\label{geyk}
\end{lem}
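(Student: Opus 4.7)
The plan is to adapt the strategy of Lemma 11 (the independence of $L, U_0, W_3, T_3$ in $M_3$) to the new set, splitting the argument into two pieces. The first piece shows that $L$ and $U_0$ are independent by reduction to $M_3$, and the second shows that $U_1$ and $V_0$ do not lie in the $\field{F}_2$-span of $L, U_0$ by passing to a finer commutator quotient that can detect them.

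For the first piece, the inclusion $K_n \subset K_3$ (which holds because $\mathcal{G}_n$ surjects onto $\mathcal{G}_3$) gives $[K_n, F] \subset [K_3, F]$, yielding a well-defined homomorphism $M_n \to M_3$ that sends $L$ and $U_0$ to their namesakes. By Lemma 11 these map to $[b, c]$ and $[a, d]^2$ in $Q_3$, which are $\field{F}_2$-independent; hence $L$ and $U_0$ are independent in $M_n$ as well.

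For the second piece, since $U_1 = \sigma(U_0)$ and $V_0$ may well become trivial under the map to $M_3$, I would map $M_n$ into a refined abelian quotient $\tilde Q_n = F'/([K_n, F] \gamma_k(F) F^{(2)})$ for $k$ chosen just large enough (around $k = 6$ or $7$) that the dominant commutator parts of $U_1$ and $V_0$ survive. Following the template of the proof of Lemma 11, I would present $\tilde Q_n$ by basic commutators in $a, b, c, d$ of length $< k$ modulo the relations $[x, r] \equiv 1$ forced by the relators $r$ of $K_n$ together with the vanishing imposed by $\gamma_k(F)$ and $F^{(2)}$. The crucial observation is that the extra relators of $K_n$ beyond those of $K_3$, namely $u_i, v_i$ for $i \geq 1$ and $w_n, t_n$, are all iterated $\sigma$-images and, as one verifies by inspection, lie in $\gamma_4(F)$ or deeper; so they impose no new relations on $\tilde Q_n$ in the commutator degrees we care about. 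Expanding $U_1$ and $V_0$ via the identities $[x, yz] = [x, z][x, y][x, y, z]$ and $[xy, z] = [x, z][x, y, z][y, z]$, exactly as in the computation of $T_3$ in Lemma 11, should yield explicit images of the form $U_1 \mapsto [a, c, a, c]^{\pm 1}$ and $V_0 \mapsto [a, c, a, d]^{\pm 1}$ (or similar distinct basic commutators of length $4$), which are linearly independent from one another and from the length-$2$ images $[b, c]$ and $[a, d]^2$ contributed by $L$ and $U_0$.

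Combining the two pieces, any nontrivial $\field{F}_2$-relation $L^{\epsilon_1} U_0^{\epsilon_2} U_1^{\epsilon_3} V_0^{\epsilon_4} \equiv 1 \pmod{[K_n, F]}$ projects to $M_3$ to force $\epsilon_1 = \epsilon_2 = 0$, and then projects to the length-$4$ component of $\tilde Q_n$ to force $\epsilon_3 = \epsilon_4 = 0$. The main obstacle is the explicit commutator expansion in the second piece: one must carefully track which degree-$4$ commutators are killed by the relations $[x, B_i] \equiv [x, L] \equiv [x, U_0] \equiv 1$ and verify that $U_1$ and $V_0$ still hit genuinely distinct basic commutators. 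This is routine but tedious commutator calculus in the free nilpotent Lie algebra, controlled by standard identities.
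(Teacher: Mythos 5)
Your overall architecture --- map $M_n$ into a metabelian nilpotent quotient of $F'$, compute images by commutator calculus, and use that the extra relators of $K_n$ lie deep in the lower central series --- is the paper's, and your observation that the relators beyond those of $K_3$ lie in $\gamma_4(F)$ modulo $[K_n,F]F^{(2)}$ is indeed the key verification (most of the paper's work goes into exactly that). But two of your concrete claims fail, and they are the ones carrying the argument. First, the reduction to $M_3$ does not force $\epsilon_2=0$: in $Q_3=F'/([K_3,F]\gamma_5(F)F^{(2)})$ one has $[a,c]^4=1$ and $[a,c,d]^2=1$, and under these relations $V_0=(adacac)^4a^{-12}c^{-8}d^{-4}$ reduces to $[a,d]^2$, which is the \emph{same} as the image of $U_0$. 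So your first projection only yields $\epsilon_1=0$ and $\epsilon_2\equiv\epsilon_4$. Second, the guessed degree-$4$ images are wrong: the paper computes, in $Q_n=F'/([K_n,F]\gamma_5(F)F^{(2)})$ for $n\geq 4$, that $U_1\approx[a,c]^4$ and $V_0\approx[a,d]^2[a,c,d]^2$. In particular $V_0$ is not a pure length-$4$ basic commutator; its image contains the image of $U_0$ as a factor, so ``projecting to the length-$4$ component'' cannot force $\epsilon_4=0$ on its own, and your two-step combination collapses.

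The mechanism that actually yields independence is not extra commutator depth but extra \emph{order}: because the relator $(ac)^4a^{-4}c^{-4}$ of $K_3$ is replaced in $K_n$ by the much deeper $\sigma^{n-3}((ac)^4a^{-4}c^{-4})$, the presentation of $Q_n$ has $[a,c]$ of order $8$ (not $4$) and $[a,c,d]$ of order $4$ (not $2$). Hence $[a,c]^4\neq 1$ and $[a,c,d]^2\neq 1$ in $Q_n$, and the four images $[b,c]$, $[a,d]^2$, $[a,c]^4$, $[a,d]^2[a,c,d]^2$ are visibly independent in the direct-product decomposition of $Q_n$. All of this happens already at nilpotency class $5$; pushing to $\gamma_6(F)$ or $\gamma_7(F)$ as you propose would in fact cost you your key simplification, since relators lying in $\gamma_4(F)$ then impose nonvacuous relations in degrees $5$ and $6$ that you would have to compute. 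To repair the proof, drop the two-step splitting, work in the single quotient $Q_n$ with $\gamma_5(F)$ in the denominator, and recompute its presentation, tracking which generator orders increase relative to $Q_3$.
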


\begin{proof}
  $M_n$ maps to the abelian group $Q_n=F'/([K_n,F]\gamma_5(F)F^{(2)})$. 
The result is a corollary of the next lemma.
\end{proof}

\begin{lem}
 $Q_n$ has the following presentation:\hfill

\hfill

Generators:

 \begin{itemize}
 \item $[a,b],[a,c],[a,d],[b,c]$
\item $[a,x,y],\quad  x\neq y, \quad x,y\in\{b,c,d\}$
\item $[a,x,y,z],\quad  x\neq y,\quad y \neq z$ and $(x,y,z)$ is not a permutation of $(b,c,d)$ where
$x,y \in \{b,c,d\}$ and $z \in\{a,b,c,d\}$
\end{itemize}

Relations:

\begin{itemize}
 \item commutativity relations
 \item $[a,b]^{16}=[a,c]^8=[a,d]^4=[b,c]^2=1$
 \item $[a,b,c]^8=[a,b,d]^8=[a,c,b]^4=[a,c,d]^4=[a,d,b]^2=[a,d,c]^2=1$
 \item $[a,x,y,z]^2=1$
\end{itemize}

Moreover the the images of $L,U_0,U_1,V_0$ in $Q_n$ are $[b,c],[a,d]^2,[a,c]^4,[a,d]^2[a,c,d]^2$ respectively.
\end{lem}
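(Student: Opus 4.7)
The plan is to mirror the proof of the preceding lemma (on $Q_3$) step by step, making adjustments only where the larger normal closure $K_n$ (for $n\ge 4$) differs from $K_3$. The generator-reduction step transfers verbatim: it only invoked $a^2,b^2,c^2,d^2,L\in K_n$, the relation $bcd\in K_n$ (which forces $[c,d]\approx[b,c]\approx[b,d]$), and the commutator identities (\ref{1}), (\ref{2}), (\ref{aa}), (\ref{yy}), (\ref{3lu}), (\ref{uzun}) modulo $\gamma_5(F)F^{(2)}$. All of these inputs are still present in $K_n$, so $Q_n$ is generated by the same list of commutators as $Q_3$.

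For the relations, I would run through the normal generators $B_1,\ldots,B_4,L,U_0,U_1,V_0,W_n,T_n$ of $K_n$ and compute $[x,r]\approx 1$ for $x\in\{a,b,c,d\}$. The relators common to $K_3$ and $K_n$ (the $B_i$, $L$, and $U_0$) yield the same identities as in the $Q_3$ case: $[b,c]^2\approx 1$, $[a,d]^4\approx 1$, and $[a,d,b]^2\approx[a,d,c]^2\approx 1$. The new relator $U_1=\sigma((ad)^4a^{-4}c^{-4})=(acac)^4(aca)^{-4}b^{-4}$ reduces modulo $\gamma_5(F)F^{(2)}$ to $[a,c]^8$ up to absorbable boundary terms, giving $[a,c]^8\approx 1$ and $[a,c,x]^4\approx 1$ for $x\in\{b,d\}$. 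The relations $[a,b]^{16}\approx 1$ and $[a,b,x]^8\approx 1$ come from the structural fact that $(ab)^{16}\in K_n$ for $n\ge 4$ (the image of $ab$ in $\mathcal{G}_n$ has order dividing $16$), applied in the same way as the $(ab)^8\in K_3$ argument in the $Q_3$ proof. Finally, $W_n$ and $T_n$ with $n\ge 4$ involve $\sigma^{n-3}$ with $n-3\ge 1$; since $\sigma$ replaces each generator by a word of higher commutator weight and the earlier relators already absorb all identities of commutator weight up to $4$, the commutators $[x,W_n]$ and $[x,T_n]$ lie in $\gamma_5(F)$ and contribute no new relations to $Q_n$.

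The main obstacle is computing the four image assertions $L\mapsto[b,c]$, $U_0\mapsto[a,d]^2$, $U_1\mapsto[a,c]^4$, and $V_0\mapsto[a,d]^2[a,c,d]^2$. The first two are copied verbatim from the $Q_3$ proof. For $U_1$ one expands $(acac)^4=((ac)^2)^4$, which modulo $\gamma_3(F)F^{(2)}$ equals $[a,c]^{8}$ from the pairing of the four $ac$-blocks; subtracting $(aca)^{-4}b^{-4}$ removes half of this contribution through the $a^2\approx 1$ and $b^2\approx 1$ identifications, leaving $[a,c]^4$. The $V_0$ computation is the most delicate: the $24$-letter word $(adacac)^4$ must be expanded via equations (\ref{1}) and (\ref{2}) and simplified using the already-established relations, with the correction $a^{-12}c^{-8}d^{-4}$ tracked carefully; the weight-two contribution collapses to $[a,d]^2$ (the $ad$-pairings inside each block), and the weight-three contribution, arising from the interaction between the $ad$ and $ac$ sub-blocks, yields $[a,c,d]^2$. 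This calculation is structurally parallel to the $T_3\mapsto[a,b,c]^{-2}$ derivation in the preceding lemma, but with one extra layer of commutator depth to track because of the additional $ac$ block inside $V_0$.
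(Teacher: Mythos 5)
Your overall strategy --- rerun the $Q_3$ argument verbatim for the generator reduction and then deal one by one with the relators of $K_n$ not present in $K_3$ --- is the same as the paper's, and your identification of the sources of $[a,d]^4=[b,c]^2=1$, $[a,c]^8=1$ (from $(ac)^8=u_1\in K_n$) and $[a,b]^{16}=1$ (from $(ab)^{16}\in K_n$) is essentially right. But there are two genuine problems. First, you assert that $U_1$ ``reduces to $[a,c]^8$ \ldots giving $[a,c]^8\approx 1$.'' This confuses the two roles a relator plays here: $Q_n$ is $F'$ modulo $[K_n,F]\gamma_5(F)F^{(2)}$, not modulo $K_n$, so an element of $K_n\cap F'$ is \emph{not} trivial in $Q_n$; only its commutators with the generators are. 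The relation $[a,c]^8\approx1$ comes from $[a,(ac)^8]\in[K_n,F]$, not from any reduction of $U_1$ itself, and the image of $U_1$ is $[a,c]^4$ --- as you state, inconsistently, two sentences later, and as the lemma requires: if $U_1$ were congruent to $[a,c]^8=1$ it would vanish in $Q_n$ and the independence of $L,U_0,U_1,V_0$, which is the entire purpose of this lemma, would fail. Your mechanism for reaching $[a,c]^4$ (``subtracting $(aca)^{-4}b^{-4}$ removes half of this contribution through $a^2\approx 1$'') is also off: $a^2$ is central but not trivial modulo $[K_n,F]$, the tail contributes no $[a,c]$-factors at all once squares are centralized, and the drop from the exponent $\binom{8}{2}=28$ produced by collecting $(ac)^8a^{-8}c^{-8}$ down to $4$ uses the order-$8$ relation on $[a,c]$, not a cancellation of half the terms.

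Second, and more seriously, your disposal of $W_n$ and $T_n$ (and, implicitly, of $V_1,\dots,V_{n-4}$, which you never address) rests on the claim that ``$\sigma$ replaces each generator by a word of higher commutator weight.'' That is false: $\sigma(b)=d$, $\sigma(c)=b$, $\sigma(d)=c$ and $\sigma(a)=aca$ all have weight one, so $\sigma$ does not by itself push anything deeper into the lower central series. The paper's actual mechanism is an explicit base-case computation showing that each of these relators is congruent modulo $[K_n,F]F^{(2)}$ to an element of $\gamma_4(F)$: $U_1\cong[a,c]^4\cong[a,c,c]^{-2}\cong[a,c,c,c]$, $V_1\cong[c,a,a,a,a][b,a,a,a]$, $W_4\cong[c,a,b]^2\cong[c,a,b,b]^{-1}$, $T_3\cong[a,b,c]^{-2}\cong[a,b,c,c]$, the final step in each case using an identity of the form $[u,x^2]\cong[u,x]^2[u,x,x]$ with $x^2\in K_n$. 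Only after that does $\sigma$ enter, through the facts that it is an endomorphism of $F$ (hence preserves $\gamma_4(F)$ and $F^{(2)}$) and maps $[K_n,F]$ into itself; this is what places all higher iterates in $\gamma_4(F)$ and hence their commutators with generators in $\gamma_5(F)$, where they are killed for free. These computations are the substance of the proof and cannot be replaced by the weight heuristic. Your sketch of the $V_0$ computation is plausible in outline but likewise omits the step where the two $[a,c]^4$ contributions combine and are killed by $[a,c]^8\approx1$, which is exactly where the difference between $Q_3$ and $Q_n$ is felt.
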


\begin{proof} Most of the proof is similar to the proof of Lemma (\ref{uuu}). Additionally we only need to show that the relations:
$$[x,U_1]=\ldots=[x,U_{n-3}]=[x,V_0]=\dots=[x,V_{n-4}]=[x,W_n]=[x,T_n]=1$$

are consequences of the given system of relators.\hfill

\hfill

Let $\cong$ mean equality in $F'$ modulo the subgroup $[K_n,F]F^{(2)}$.\hfill

\hfill

Using equation (\ref{yy}) we have:
$$[a,c]^2\cong[a,c,c]^{-1} $$

Also using equation (\ref{1})
$$1\cong[a,c,c^2]\cong[a,c,c]^2[a,c,c,c] $$

which implies 
$$[a,c,c]^{-2}\cong[a,c,c,c] $$

and hence 
$$U_1\cong[a,c]^4\cong[a,c,c]^{-2}\cong[a,c,c,c] $$
which yields $U_1\in \gamma_4(F)$ mod $[K_n,F]F^{(2)}$.Therefore $[x,U_1]\in \gamma_5(F)$ mod $[K_n,F]F^{(2)}$.

It follows that relations of the form $[x,U_1]$ are consequences of previous relations. 
Since $\sigma(\gamma_5(F))\leq \gamma_5(F)$ and $\sigma(F^{(2)})\leq F^{(2)}$, we also see that relations of
the form $[x,U_i]$ where $i=2,\ldots,n-3$ are consequences of previous relations.\hfill

\hfill

For $V_0$ we have:
\begin{eqnarray*}
 V_0 & = & (adacac)^4a^{-12}c^{-8}d^{-4} \\
     & = & (ad)^2(ac)^2[(ac)^2,ad](ac)^2(ad)^2(ac)^2[(ac)^2,ad](ac)^2a^{-12}c^{-8}d^{-4}\\
    &  \ap & [a,d]^2[a,c]^4[[a,c],ad]^2 \\
    & \ap & [a,d]^2[a,c]^4[a,c,d]^2[a,c,a]^2[a,c,a,d]^2\\
    & \ap & [a,d]^2[a,c]^4[a,c,d]^2[c,a,a]^{-2}[a,c,a,d]^2\\
    & \ap &  [a,d]^2[a,c]^4[a,c,d]^2[a,c]^4[a,c,a,d]^2 \\
     & \ap & [a,d]^2[a,c,d]^2
\end{eqnarray*}

hence 
$$[x,V_0]\ap[x,[a,d]^2[a,c,d]^2]\ap[x,[a,d]]^2[a,[a,c,d]]^2=[a,d,x]^2[a,c,d,x]^2 $$

is a consequence of previous relations.\hfill

\hfill

For $V_1$ we get:
\begin{eqnarray*}
 V_1 & = & (acacacabacab)^4(aca)^{-12}b^{-8}c^{-4} \\
     & \cong & ((ac)^4ab[ab,ac][[ab,ac],ab])^4(aca)^{-12}b^{-8}c^{-4} \\
     & \cong & (ac)^{16}(ab)^8[[ab,ac],ab]^4(aca)^{-12}b^{-8}c^{-4} \\
     & \cong & [c,a,a,a,a][b,a,a,a]
\end{eqnarray*}

therefore $V_1 \in \gamma_4(F)$ mod $[K_n,F]F^{(2)}$ hence  relations of
the form $[x,V_i]$ where $i=1,\ldots,n-4$ are consequences of previous relations.
\begin{eqnarray*}
W_4 & =   & (acab)^4(aca)^{-4}b^{-4} \\
    & \cong & ([a,c][c,ab])^2 \\
    & \cong & [a,c]^2 ([c,b][c,a][c,a,b])^2\\
    & \cong & [c,a,b]^2
\end{eqnarray*}

But by equation (\ref{1}), 

$$1\cong[c,a,b^2]\cong[c,a,b]^2[c,a,b,b]$$

therefore 
$$W_4\cong[c,a,b,b]^{-1}$$
hence $W_n=\sigma^{n-4}(W_4)\in \gamma_4(F)$ modulo $[K_n,F]F^{(2)}$ and relations $[x,W_n]$ follow from 
the previous relations.\hfill

\hfill

Finally for $T_3$ from previous computations we get:

$$T_3\cong [a,b,c]^{-2} $$
using 
$$1\cong[[a,b],c^2]\cong[a,b,c]^2[a,b,c,c] $$

we get 
$$T_3\cong[a,b,c]^{-2}\cong[a,b,c,c] $$

hence $T_n=\sigma^{n-3}(T_3)\in \gamma_4(F)$ modulo $[K_n,F]F^{(2)}$ and relations $[x,T_n]$ follow from 
the previous relations.

\end{proof}

Let $P=\gp{b,c,d,b^a,c^a,d^a}_F$ be the lift of $St_\mathcal{G}(1)$ to $F$ and let  
 
$$\psi : P \rightarrow \Gamma \times \Gamma $$ be the homomorphism similar to $\bar\varphi$.

\begin{lem}
 We have $Ker(\psi)=\gpn{B_1,B_2,B_3,B_4,L,U_0,U_1,V_0}$
\label{kern}
\end{lem}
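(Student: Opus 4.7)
The plan is to identify $Ker(\psi)$ via the factorization $\psi = \bar\varphi \circ (\pi|_P)$, where $\pi : F \twoheadrightarrow \Gamma$ is the canonical surjection and $\pi|_P$ maps $P$ onto $\Xi$. Under this factorization, $Ker(\psi)$ coincides with the preimage in $P$ of $Ker(\bar\varphi)$, and by the recursive description of the $\Omega_n$ used in the proof of Theorem~1 we already know $Ker(\bar\varphi) = \Omega_1 = \gp{u_1,v_0}_\Gamma^\#$. So the task is to show that the normal closure $N := \gpn{B_1,B_2,B_3,B_4,L,U_0,U_1,V_0}$ realizes exactly this preimage.

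The first substantive step will be a cosmetic rewriting of $N$. Modulo $\gpn{B_1,B_2,B_3,B_4}$, the relator $L = b^2c^2d^2(bcd)^{-2}$ becomes trivial (once $b^2, c^2, d^2, bcd$ are killed normally, what remains in $L$ is a product of conjugates of these, and it vanishes in the normal closure). Moreover each of $U_0, U_1, V_0$ differs from the corresponding Lysenok relator $u_0, u_1, v_0$ only by a correction tail of the form $a^{-k}c^{-\ell}d^{-m}$ which lies in $\gpn{B_1,B_3,B_4}$. Consequently
\[
 N \;=\; \gpn{B_1,B_2,B_3,B_4,u_0,u_1,v_0}.
\]
This is the main piece of bookkeeping in the proof; the tail corrections in the definitions of $U_i, V_i$ were engineered so that the later multiplier computations are clean, and one must check that each tail is genuinely a consequence of $B_1, B_2, B_3$ in $F$ (not merely in $\Gamma$), which is routine commutator arithmetic.

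Once this rewrite is in hand, the finish is essentially immediate: $\gpn{B_1,B_2,B_3,B_4,u_0} = Ker(\pi)$ is precisely the presentation of $\Gamma$, so $F/N$ is canonically identified with $\Gamma/\gp{u_1,v_0}_\Gamma^\# = \Gamma/\Omega_1$. Under this identification, the map $F/N \to \Gamma \times \Gamma$ induced by $\psi$ is the injection $\Xi/\Omega_1 \hookrightarrow \Gamma \times \Gamma$ coming from $\bar\varphi$. Therefore $N$ coincides with $\pi^{-1}(\Omega_1)$ inside $F$, which intersected with $P$ gives $Ker(\psi)$; since each generator of $N$ already lies in $P$ and $P$ is stable under the conjugations used (using that $B_1 = a^2$ normalizes the generating set of $P$), the intersection is all of $N$. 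The only real obstacle is the bookkeeping of the first step; the rest is a direct pullback of the previously established description of $\Omega_1$.
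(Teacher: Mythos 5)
Your route is genuinely different from the paper's: the paper disposes of this lemma by citing it as a restatement of Lemma~11 of Grigorchuk's 1999 paper, whereas you derive it internally from the factorization $\psi=\bar\varphi\circ(\pi|_P)$ and the already-quoted identity $Ker(\bar\varphi)=\Omega_1=\gp{u_1,v_0}_\Gamma^\#$. That reduction is a good idea, and the endgame ($Ker(\psi)=\pi^{-1}(\Omega_1)$, which is the normal closure in $F$ of the relators of $\Gamma$ together with $u_1,v_0$) is correct.

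However, the step you yourself flag as the main piece of bookkeeping is wrong as stated. Modulo $\gpn{B_1,B_2,B_3,B_4}=\gpn{a^2,b^2,c^2,bcd}$ the element $L=b^2c^2d^2(bcd)^{-2}$ does \emph{not} become trivial: it becomes $d^2$, and $d^2$ is not a consequence of $a^2,b^2,c^2,bcd,u_0,u_1,v_0$. Concretely, killing $a$ maps $F/\gpn{a^2,b^2,c^2,bcd,u_0,u_1,v_0}$ onto $\gp{b,c\mid b^2,c^2,(bc)^4}\cong D_8$, in which the image of $d^2$ is the nontrivial central element; hence $d^2\notin\gpn{B_1,B_2,B_3,B_4,u_0,u_1,v_0}$ and your displayed equality for $N$ drops a defining relator of $\Gamma$. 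The same oversight recurs in your claim about the tails: the tail $d^{-4}$ of $V_0$ lies in $\gpn{B_1,B_2,B_3,B_4,L}$ but not in $\gpn{B_1,B_3,B_4}$ (and the tail of $U_1$ needs $B_2$ as well). The repair is short --- $L$ is precisely the generator that, together with $B_2,B_3,B_4$, yields $d^2$, so the correct rewrite is $N=\gpn{a^2,b^2,c^2,d^2,bcd,u_0,u_1,v_0}$, after which your pullback argument goes through --- but as written the proof identifies a strictly smaller normal subgroup. One further point to tighten: for $N\subseteq P$ you should take $P$ to be the full preimage of $\Xi$, i.e.\ the normal index-two subgroup $\gp{a^2,b,c,d,b^a,c^a,d^a}_F$; normality of $P$ in $F$ is what makes the intersection with the $F$-normal closure harmless, not the remark about $a^2$ normalizing the generating set.
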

\begin{proof}
  Restatement of \cite{MR1676626} lemma 11.
\end{proof}

\begin{lem}
 For $n\geq 4$ the following isomorphism holds:

$$K_n / ([K_n, F]) \cong C^4 \times C_2^4 \times \psi(K_n) / \psi([K_n,F])   $$

where the factor $C^4$ is generated by $B_1,\ldots,B_4$ and $C_2^4$ is generated by $L,U_0,U_1,V_0$.
\label{ebe}
\end{lem}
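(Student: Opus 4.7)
The plan is to push the decomposition $K_n/[K_n,F]\cong C^4\times M_n$ from Lemma \ref{basit} through the homomorphism induced by $\psi$, then use Lemmas \ref{kern} and \ref{geyk} to identify the kernel as a direct factor. First I will verify that $K_n\subseteq P$: every defining relator has even total $a$-exponent (the substitution $\sigma$ doubles the number of $a$'s), and $P$ is the index-$2$ normal subgroup of $F$ cut out by $a$-parity; consequently $[K_n,F]\subseteq P$ as well. Thus $\psi$ restricts to $K_n$ and descends to a surjective homomorphism
$$\bar\psi\colon\;K_n/[K_n,F]\;\longrightarrow\;\psi(K_n)/\psi([K_n,F]),$$
whose kernel is $\bigl((K_n\cap\ker\psi)\cdot[K_n,F]\bigr)/[K_n,F]$ by a direct chase.

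Next I will verify, for $n\ge 4$, that $\ker\psi\subseteq K_n$. By Lemma \ref{kern}, $\ker\psi$ is the $F$-normal closure of $\{B_1,B_2,B_3,B_4,L,U_0,U_1,V_0\}$, and each of these eight elements lies in $K_n$ once $n\ge 4$: the $B_i$ and $L$ come directly from the modified relator list, $U_0$ and $V_0$ are in $K_n$ because $u_0,v_0$ appear among the defining relators for $n\ge 4$, and $U_1=\sigma(U_0)$ lies in $\sigma(K_n)\subseteq K_n$. Since $K_n/[K_n,F]$ is abelian, $F$-conjugation acts trivially on it, so the image of $\ker\psi$ in $K_n/[K_n,F]$ is just the subgroup generated by the classes of these eight elements.

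Finally I will assemble the product decomposition. By Lemma \ref{basit} the classes of $B_1,\ldots,B_4$ freely generate a direct $C^4$ summand, while $L,U_0,U_1,V_0$ sit in the elementary abelian $2$-group $M_n$; by Lemma \ref{geyk} these four are $\f{F}_2$-independent in $M_n$, hence generate a copy of $C_2^4$. Because $M_n$ is a vector space over $\f{F}_2$, this subspace admits a complement $N\le M_n$, yielding
$$K_n/[K_n,F]\;\cong\;C^4\times C_2^4\times N,\qquad \ker(\bar\psi)=C^4\times C_2^4,$$
so $\bar\psi$ restricts to an isomorphism $N\cong\psi(K_n)/\psi([K_n,F])$, which yields the claimed decomposition. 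The only substantive step is the containment $\ker\psi\subseteq K_n$, and this is precisely where the hypothesis $n\ge 4$ enters; everything else is formal manipulation built on top of Lemmas \ref{basit}, \ref{geyk}, and \ref{kern}.
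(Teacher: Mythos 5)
Your proof is correct and takes essentially the same route as the paper: both identify the kernel of the map induced by $\psi$ on $K_n/[K_n,F]$ as the image of $\ker\psi$, use Lemma \ref{kern} to see that this image is generated by the classes of $B_1,\ldots,B_4,L,U_0,U_1,V_0$, and then split it off as a direct factor using Lemma \ref{basit} and Lemma \ref{geyk}. You merely make explicit a few steps the paper leaves implicit (that $K_n\subseteq P$, that $\ker\psi\subseteq K_n$ for $n\geq 4$, and the existence of a complement in the $\field{F}_2$-vector space $M_n$).
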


\begin{proof}
 Let $$\psi_* : K_n/[K_n,F] \rightarrow \psi(K_n)/\psi([K_n,F]) $$

be the homomorphism induced by $\psi$.

Then by lemma (\ref{kern}) we have:

$$
\begin{array}{ccl}
Ker(\psi_*)  & = & (Ker(\psi))[K_n,F]/[K_n,F]\\
    & = & \gp{B_1,B_2,B_3,B_4,L,U_0,U_1,V_0}_{K_n/[K_n,F]} \\
  & \cong &  C^4 \times C_2^4
\end{array}
$$

Hence from lemma (\ref{ilkbol}) the result follows.
\end{proof}

\begin{lem}

 We have:
$$\psi(U_i)=(1,U_{i-1})$$
$$\psi(V_i)=(1,V_{i-1})$$
$$\psi(W_n)=(1,W_{n-1})$$
$$\psi(T_n)=(1,T_{n-1})$$

for $i\geq 1 $ and $n\geq 4$.

\label{parcala}
\end{lem}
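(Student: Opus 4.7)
The plan is to reduce the four identities to equation (\ref{incik}), which records the analogous statements for the Lysenok relators $u_i,v_i,w_n,t_n$. The idea is that the ``correction factors'' $a^{-4}c^{-4}$, $a^{-12}c^{-8}d^{-4}$, and $a^{-8}b^{-4}c^{-4}$ appended to the Lysenok relators all become trivial in $\Gamma$ after applying any power of $\sigma$, so that $U_i,V_i,W_n,T_n$ project to the same elements of $\Xi$ as $u_i,v_i,w_n,t_n$ respectively.

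First I would observe that $\psi$ factors through the canonical projection $F\to\Gamma$ and agrees with $\bar\varphi$ on $\Xi$: this is immediate by comparing their defining formulas on the Schreier generators $b,c,d,b^a,c^a,d^a$ of $P$. Hence $\psi(w)$ depends only on the image of $w$ in $\Xi$, so it suffices to prove the four equalities after projection to $\Xi$.

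The main technical step is the claim that for every $i\geq 0$ and every $x\in\{a,b,c,d\}$ one has $\sigma^i(x)^2=1$ in $\Gamma$. Since $\sigma$ permutes $\{b,c,d\}$ cyclically, the assertion is trivial for $x\in\{b,c,d\}$. For $x=a$ one proceeds by induction on $i$: writing $X=\sigma^i(a)$ and $Y=\sigma^i(c)$, the identity $\sigma(a)=aca$ gives
\[
(\sigma^{i+1}(a))^2 = (XYX)^2 = XY\cdot X^2\cdot YX = XY^2X = X^2 = 1
\]
in $\Gamma$, using the inductive hypothesis $X^2=1$ together with $Y^2=1$. Consequently $\sigma^j(a^{-4}c^{-4})$, $\sigma^j(a^{-12}c^{-8}d^{-4})$, and $\sigma^j(a^{-8}b^{-4}c^{-4})$ all project to the identity in $\Gamma$ for every $j\geq 0$, so $U_i,V_i,W_n,T_n$ have the same images in $\Xi$ as $u_i,v_i,w_n,t_n$ respectively.

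Combining this with equation (\ref{incik}) yields
\[
\psi(U_i) = \bar\varphi(u_i) = (1,u_{i-1}) = (1,U_{i-1}),
\]
and the other three identities follow by identical reasoning. The only nontrivial ingredient is the short induction establishing $(\sigma^i(a))^2=1$ in $\Gamma$; everything else is bookkeeping.
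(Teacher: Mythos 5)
Your proof is correct, but it takes a different route from the paper's. The paper argues directly with the capitalized relators: it computes $\psi\sigma$ on the generators $a,b,c,d$ (obtaining $(d,a)$, $(1,b)$, $(a,c)$, $(a,d)$), observes that $\pi_1\psi\sigma$ therefore has image in $\gp{a,d}_\Gamma\cong D_8$, which has exponent $4$, while $\pi_2\psi\sigma$ is the identity, and concludes at once because $U_{i-1},V_{i-1},W_{n-1},T_{n-1}$ are products of fourth powers and $U_i=\sigma(U_{i-1})$, and similarly for the others. You instead reduce everything to the already-established equation (\ref{incik}) for the lowercase relators; the new ingredient is that the appended tails $a^{-4}c^{-4}$, $a^{-12}c^{-8}d^{-4}$, $a^{-8}b^{-4}c^{-4}$ die in $\Gamma$ under every power of $\sigma$. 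Your induction showing $(\sigma^i(a))^2=1$ in $\Gamma$ is correct, and the identification of $\psi$ with $\bar\varphi$ composed with the projection $P\to\Xi$ is legitimate, since both are homomorphisms agreeing on a generating set of $P$. What your route buys is an explicit proof that $U_i,V_i,W_n,T_n$ and $u_i,v_i,w_n,t_n$ have the same images in $\Gamma$, a fact that is only implicit in the paper; what it costs is the extra induction, which the paper's argument avoids by treating the tails as fourth powers along with everything else and letting the single exponent-$4$ observation kill the whole first coordinate. Both arguments are sound.
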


\begin{proof}
We have 
$$ 
\psi\sigma \left\{
\begin{array}{c}
a \mapsto (d,a) \\
b \mapsto (1,b)\\
c \mapsto (a,c) \\
d \mapsto (a,d)
\end{array}
\right.
$$

Hence the image of $\pi_1\psi\sigma $ lies in the subgroup $\gp{a,d}_{\Gamma}$ and 
$\pi_2\psi\sigma$ is the identity map. Since $\gp{a,d}_{\Gamma}$ has exponent 4 we get 
the asserted equalities.
\end{proof}

Let $$\Theta_n=\gp{U_1,\ldots,U_{n-3},V_0,\ldots,V_{n-4},W_n,T_n}_{\Gamma}^\#$$
so that $\Gamma / \Theta_n \cong \mathcal{G}_n$.

\begin{lem}
 The following relations hold:

$$\psi(K_n)=\Theta_{n-1} \times \Theta_{n-1} $$

$$\psi([K_n,F])=([\Theta_{n-1},\Gamma]\times [\Theta_{n-1},\Gamma])  \Psi $$

where $\Psi \leq \Gamma \times \Gamma$ is the subgroup consisting of elements of the form $(w^{-1},w),w\in 
\Theta_{n-1}$.
\label{yazmadi}
\end{lem}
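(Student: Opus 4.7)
My plan for the first identity is to reduce to Lemma~9. The map $\psi$ is the free-group lift of $\bar\varphi$: it factors as $P \twoheadrightarrow \Xi \xrightarrow{\bar\varphi} \Gamma \times \Gamma$. By Theorem~1, the image of $K_n$ in $\Gamma$ is $\Upsilon_n$, and $\Upsilon_n = \Theta_n \subseteq \Xi$. Hence $\psi(K_n) = \bar\varphi(\Upsilon_n)$, and Lemma~9 gives $\bar\varphi(\Upsilon_n) = \Upsilon_{n-1} \times \Upsilon_{n-1} = \Theta_{n-1} \times \Theta_{n-1}$.

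For the second identity I will exploit the wreath product structure. Since $P$ has index $2$ in $F$ with coset representative $a$, every element of $F$ has the form $p$ or $ap$ with $p\in P$, so $[K_n,F]$ is normally generated by $\{[k,p] : k \in K_n, p \in P\} \cup \{[k,a] : k \in K_n\}$. Conjugation by $a$ on $P$ corresponds, under $\psi$, to the swap $(g_0,g_1)\mapsto(g_1,g_0)$ in $(\Gamma\times\Gamma)\rtimes C_2$. Writing $\psi(k)=(k_0,k_1)\in\Theta_{n-1}\times\Theta_{n-1}$ and $\psi(p)=(p_0,p_1)$, I get
$$\psi([k,p]) = ([k_0,p_0],\,[k_1,p_1]), \qquad \psi([k,a]) = (k_0^{-1}k_1,\,k_1^{-1}k_0),$$
the former lying in $[\Theta_{n-1},\Gamma]\times[\Theta_{n-1},\Gamma]$ and the latter visibly in $\Psi$. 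For the reverse inclusion, the first identity produces, for any $w\in\Theta_{n-1}$, some $k\in K_n$ with $\psi(k)=(w,1)$; and a direct inspection of $\bar\varphi(b),\ldots,\bar\varphi(d^a)$ shows that each coordinate projection of $\bar\varphi(\Xi)$ onto $\Gamma$ is surjective. Together these let me realize arbitrary elements of $[\Theta_{n-1},\Gamma]\times 1$, of $1\times[\Theta_{n-1},\Gamma]$, and of $\Psi$ as images of explicit commutators in $[K_n,F]$.

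The main obstacle will be the converse direction for conjugated commutators $[k,a]^p$, which arise from the identity $[k,ap]=[k,p]\cdot[k,a]^p$. I must show that $\psi([k,a]^p) = (p_0^{-1}w^{-1}p_0,\,p_1^{-1}wp_1)$, where $w=k_1^{-1}k_0\in\Theta_{n-1}$, sits inside $([\Theta_{n-1},\Gamma]\times[\Theta_{n-1},\Gamma])\Psi$. This will come down to a normality observation: because $[\Theta_{n-1},\Gamma]\trianglelefteq\Gamma$, the image of $\Theta_{n-1}$ in $\Gamma/[\Theta_{n-1},\Gamma]$ is central, so $p_i^{-1}w^{\pm 1}p_i \equiv w^{\pm 1}$ modulo $[\Theta_{n-1},\Gamma]$, yielding $\psi([k,a]^p)\equiv(w^{-1},w)\pmod{[\Theta_{n-1},\Gamma]\times[\Theta_{n-1},\Gamma]}$. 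The same normality point also verifies that $([\Theta_{n-1},\Gamma]\times[\Theta_{n-1},\Gamma])\Psi$ is genuinely a subgroup of $\Gamma\times\Gamma$, a check one needs just to make sense of the statement.
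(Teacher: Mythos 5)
Your argument is correct, and it actually supplies more than the paper does: for this lemma the paper gives no proof at all, deferring to Lemma~14 of Grigorchuk's paper \cite{MR1676626}, and what you have written is precisely the standard argument behind that reference. Two small calibrations: the identity $\bar\varphi(\Upsilon_n)=\Upsilon_{n-1}\times\Upsilon_{n-1}$ that you invoke for the first equality is Lemma~4 of the paper (the $St_\mathcal{G}(3)\leq K$ branching lemma), not Lemma~9; and the containment $K_n\leq P$ needed to even apply $\psi$ requires reading $P$ as the full preimage of $St_\mathcal{G}(1)$ in $F$, i.e.\ as containing $a^2$ --- this is clearly the paper's intent, since $B_1=a^2$ is listed among the generators of $Ker(\psi)$. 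With those understood, your decomposition $F=P\sqcup aP$, the swap formula $\psi(k^a)=(k_1,k_0)$, the surjectivity of the coordinate projections of $\psi(P)$, and the centrality of $\Theta_{n-1}$ modulo $[\Theta_{n-1},\Gamma]$ (which also makes $([\Theta_{n-1},\Gamma]\times[\Theta_{n-1},\Gamma])\Psi$ a subgroup) give both inclusions in full.
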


\begin{proof}
 Similar to \cite{MR1676626} lemma 14.
\end{proof}

\begin{lem}
 We have the isomorphism:

$$\psi(K_n) / \psi([K_n,F]) \cong \Theta_{n-1}/([\Theta_{n-1},\Gamma]) $$

and the generators $\psi(U_i),\psi(V_i),\psi(W_n),\psi(T_n)$ are mapped to the generators
$U_{i-1},V_{i-1},W_{n-1},T_{n-1}$ respectively.
\end{lem}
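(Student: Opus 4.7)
The plan is to exhibit an explicit isomorphism using the decomposition of $\psi(K_n)$ and $\psi([K_n,F])$ supplied by Lemma~\ref{yazmadi}. Since $\Theta_{n-1}/[\Theta_{n-1},\Gamma]$ is abelian (because $[\Theta_{n-1},\Theta_{n-1}]\subseteq[\Theta_{n-1},\Gamma]$), the multiplication map
\[
\alpha : \Theta_{n-1}\times\Theta_{n-1} \longrightarrow \Theta_{n-1}/[\Theta_{n-1},\Gamma],\qquad (u,v)\mapsto uv \cdot [\Theta_{n-1},\Gamma]
\]
is a well-defined surjective group homomorphism. By Lemma~\ref{yazmadi} the domain equals $\psi(K_n)$, so it remains only to identify $\ker\alpha$ with $\psi([K_n,F])=([\Theta_{n-1},\Gamma]\times[\Theta_{n-1},\Gamma])\,\Psi$.

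For the kernel computation, the containment $\psi([K_n,F])\subseteq\ker\alpha$ is immediate: elements of $[\Theta_{n-1},\Gamma]\times[\Theta_{n-1},\Gamma]$ have product in $[\Theta_{n-1},\Gamma]$, and every element $(w^{-1},w)\in\Psi$ has product $1$. For the reverse inclusion, suppose $(u,v)\in\Theta_{n-1}\times\Theta_{n-1}$ satisfies $uv\in[\Theta_{n-1},\Gamma]$, say $uv=c$. Then
\[
(u,v)=(u,u^{-1})\cdot(1,uv)=(u,u^{-1})\cdot(1,c),
\]
where $(u,u^{-1})\in\Psi$ and $(1,c)\in[\Theta_{n-1},\Gamma]\times[\Theta_{n-1},\Gamma]$. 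So $\ker\alpha=\psi([K_n,F])$ and the first isomorphism theorem gives the desired isomorphism.

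For the statement about generators, invoke Lemma~\ref{parcala}: under $\psi$ we have $\psi(U_i)=(1,U_{i-1})$, $\psi(V_i)=(1,V_{i-1})$, $\psi(W_n)=(1,W_{n-1})$, $\psi(T_n)=(1,T_{n-1})$. Applying $\alpha$ (i.e.\ multiplying the two coordinates) sends these to $U_{i-1}$, $V_{i-1}$, $W_{n-1}$, $T_{n-1}$ modulo $[\Theta_{n-1},\Gamma]$, which are precisely the generators of $\Theta_{n-1}/[\Theta_{n-1},\Gamma]$ coming from the defining generating set of $\Theta_{n-1}$.

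The only step requiring care is the kernel identification, and even there the main subtlety is just noticing the factorization trick $(u,v)=(u,u^{-1})(1,uv)$ which cleanly separates the $\Psi$ part from the commutator part; no substantive new calculation is needed beyond what is already supplied by Lemmas~\ref{parcala} and \ref{yazmadi}.
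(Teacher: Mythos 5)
Your proof is correct and follows essentially the same route as the paper: both rely on Lemma~\ref{yazmadi} for the decomposition of $\psi(K_n)$ and $\psi([K_n,F])$, and your multiplication map $\alpha$ is exactly the homomorphism underlying the paper's assignment $(1,x)\mapsto x$ (the paper first uses $\Psi$ to identify $(x,1)$ with $(1,x)$, which is the same observation as your factorization $(u,v)=(u,u^{-1})(1,uv)$). The only difference is that you spell out the kernel verification that the paper dismisses as ``easy to check,'' which is a welcome but not substantively different addition.
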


\begin{proof}
 By lemma \ref{yazmadi}, $\psi(K_n) / \psi([K_n,F])$ is isomorphic to 
\begin{equation}
(\Theta_{n-1} \times \Theta_{n-1})/(([\Theta_{n-1},\Gamma]\times [\Theta_{n-1},\Gamma])\Psi)  
\label{heyvan}
\end{equation}

Since $(1,x)^{-1}(x,1)=(x,x^{-1})\in \Xi$, (\ref{heyvan}) is generated by elements of the form $(1,x)$ where 
$x\in\{U_1,\ldots,U_{n-4},V_0,\ldots,V_{n-5},W_{n-1},T_{n-1}\}$.\hfill

\hfill

It is easy to check that the map 

$$(1,x) \mapsto  x $$

gives an isomorphism between (\ref{heyvan}) and $ \Theta_{n-1}/([\Theta_{n-1},\Gamma])$.
\end{proof}

\begin{lem}
$W_3,T_3$ are independent in  $\Theta_3 /([\Theta_3,\Gamma])$ 
\label{kucuk}
\end{lem}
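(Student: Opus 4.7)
The plan is to lift the question to the already-understood abelian group $K_3/[K_3,F]$. The canonical surjection $\pi\colon F\twoheadrightarrow\Gamma$ restricts to a surjection $K_3\twoheadrightarrow\Theta_3$ with kernel $N=\gpn{a^2,b^2,c^2,d^2,bcd,(ad)^4}$, and carries $[K_3,F]$ into $[\Theta_3,\Gamma]$. Thus one gets an induced surjection
$$\pi_*\colon K_3/[K_3,F]\twoheadrightarrow \Theta_3/[\Theta_3,\Gamma]$$
whose kernel is the image of $N$ in $K_3/[K_3,F]$, so the task reduces to identifying that image explicitly.

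Since $K_3/[K_3,F]$ is abelian and $F$ acts trivially on it by conjugation (because $fkf^{-1}k^{-1}\in[K_3,F]$), the image of the normal closure $N$ is just the abelian subgroup generated by the images of its six defining generators. Under the decomposition $K_3/[K_3,F]\cong C^4\times M_3$ established earlier, with $C^4$ freely generated by $B_1,\dots,B_4$ and $M_3\cong C_2^4$ having $\{L,U_0,W_3,T_3\}$ as an independent basis, the elements $a^2,b^2,c^2,bcd$ contribute the four $B_i$ directly, and the definitions $L=b^2c^2d^2(bcd)^{-2}$ and $U_0=(ad)^4a^{-4}c^{-4}$ of the modified relators let one trade $d^2$ and $(ad)^4$ for $L$ and $U_0$ modulo $\gp{B_1,\dots,B_4}$. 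Hence the image of $N$ is exactly $\gp{B_1,\dots,B_4,L,U_0}\cong C^4\times C_2^2$.

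Passing to the quotient then yields $\Theta_3/[\Theta_3,\Gamma]\cong C_2^2$ with basis $\{W_3,T_3\}$, which is precisely the asserted independence. The real obstacle has already been paid for in the earlier independence statement for $L,U_0,W_3,T_3$ in $M_3$; granted that, the present lemma is a short quotient calculation, the only nontrivial step being the unscrambling of $d^2$ and $(ad)^4$ into $L$ and $U_0$, which is forced by the way the modified relators were set up.
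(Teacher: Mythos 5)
Your proof is correct, but it takes a genuinely different route from the paper's. The paper disposes of this lemma by declaring its proof analogous to that of Lemma \ref{uuu}, i.e.\ by redoing the commutator calculus of the $Q_3$-computation inside a nilpotent metabelian quotient of $\Gamma$ and reading off the images of $W_3$ and $T_3$ there. You instead deduce the statement formally from what has already been established over the free group: since $\pi\colon F\to\Gamma$ carries $K_3$ onto $\Theta_3$ and carries $[K_3,F]$ \emph{onto} $[\Theta_3,\Gamma]$ (not merely into it --- this is the one point worth stating explicitly, since it is what identifies the kernel of the induced surjection as the image of $N=\ker\pi$), the group $\Theta_3/[\Theta_3,\Gamma]$ is the quotient of $K_3/[K_3,F]\cong C^4\times M_3$ by the subgroup generated by the classes of $a^2,b^2,c^2,d^2,bcd,(ad)^4$, conjugates being irrelevant modulo $[K_3,F]$. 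After your substitutions trading $d^2$ for $L$ and $(ad)^4$ for $U_0$, that subgroup is exactly $\gp{B_1,B_2,B_3,B_4,L,U_0}$, which kills the $C^4$ factor together with the $\gp{L,U_0}$ summand of $M_3\cong C_2^4$ and leaves $C_2^2$ spanned by $W_3,T_3$. What your approach buys is that no new calculation is needed: Lemma \ref{basit} and the independence of $L,U_0,W_3,T_3$ in $M_3$ do all the work, and you even obtain the sharper conclusion $\Theta_3/[\Theta_3,\Gamma]\cong C_2^2$, which is the form in which this lemma is actually used in the proof of Theorem 2. The paper's route, by contrast, stays entirely inside $\Gamma$ but requires repeating the lengthy presentation computation.
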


\begin{proof}
 The proof is analogous to the proof of lemma (\ref{uuu}) and is omitted.
\end{proof}

\begin{lem}
For $n\geq 4$, $U_1,V_0$ are independent in $\Theta_n /([\Theta_n,\Gamma])$
\label{eyi}
\end{lem}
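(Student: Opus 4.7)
The plan is to mimic the proof of Lemma~\ref{geyk} one level up, working inside $\Gamma$ instead of the free group $F$. Set
\[
\tilde Q_n \;=\; \Gamma' \big/ \bigl([\Theta_n,\Gamma]\,\gamma_5(\Gamma)\,\Gamma^{(2)}\bigr).
\]
Each generator of $\Theta_n$ is a fourth power, so $\Theta_n\subseteq\Gamma'$ and there is a natural homomorphism $\Theta_n/[\Theta_n,\Gamma]\to\tilde Q_n$. It suffices to show that $U_1$ and $V_0$ have $\field{F}_2$-linearly independent images in $\tilde Q_n$.

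The canonical surjection $F\twoheadrightarrow\Gamma$ sends $K_n$ onto $\Theta_n$ and $[K_n,F]$ into $[\Theta_n,\Gamma]$, and so descends to a surjection $Q_n\twoheadrightarrow\tilde Q_n$. Consequently every commutator identity derived in the proofs of Lemma~\ref{uuu} and Lemma~\ref{geyk} remains valid in $\tilde Q_n$; in particular the computations performed there already give
\[
U_1 \equiv [a,c,c,c], \qquad V_0 \equiv [a,d]^2[a,c,d]^2.
\]
To obtain a presentation of $\tilde Q_n$ I would rerun the commutator-calculus argument used for $Q_n$, adding only the one defining relation of $\Gamma$ over $F$ that does not already lie in $K_n$, namely $(ad)^4$. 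Because $a^2=d^2=1$ forces $(ad)^2=[a,d]$ in $\Gamma$, this extra relator contributes in $\tilde Q_n$ the single additional identity $[a,d]^2\equiv 1$.

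Applying $[a,d]^2\equiv 1$ to the formula for $V_0$ reduces it to $V_0\equiv [a,c,d]^2$ in $\tilde Q_n$. It then remains to verify that $[a,c,c,c]=[a,c]^4$ and $[a,c,d]^2$ are $\field{F}_2$-independent in $\tilde Q_n$. In $Q_n$ itself, the relations $[a,c]^8=1$ and $[a,c,d]^4=1$, together with commutativity and the relators $[a,x,y,z]^2=1$, place these two elements in distinct cyclic summands of order $2$ in a direct-sum decomposition of $Q_n$; imposing $[a,d]^2\equiv 1$ only truncates the $[a,d]$-summand and introduces no new relation among the other generators. The main obstacle of the proof is precisely this final bookkeeping step: one must re-examine the presentation of $Q_n$ in the $\Gamma$-setting and confirm that the image of $(ad)^4$ yields no relation beyond $[a,d]^2\equiv 1$, in particular no coupling of the $[a,c,c,c]$-coordinate to the $[a,c,d]^2$-coordinate. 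Once this is done, the images of $U_1$ and $V_0$ are independent in $\tilde Q_n$, and the lemma follows.
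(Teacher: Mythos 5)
Your overall strategy --- pushing the computation of Lemma \ref{geyk} down to $\Gamma$ by forming $\tilde Q_n=\Gamma'/([\Theta_n,\Gamma]\gamma_5(\Gamma)\Gamma^{(2)})$ and reading off the images of $U_1$ and $V_0$ there --- is exactly the ``analogous'' argument the paper intends, and your formulas $U_1\equiv[a,c]^4$ and $V_0\equiv[a,d]^2[a,c,d]^2$ are the right ones. However, the step where you determine the kernel of $Q_n\twoheadrightarrow\tilde Q_n$ rests on a false premise. You say $(ad)^4$ is ``the one defining relation of $\Gamma$ over $F$ that does not already lie in $K_n$''; in fact $(ad)^4=u_0$ is by definition one of the normal generators of $K_n$, and \emph{all} six relators $a^2,b^2,c^2,d^2,bcd,(ad)^4$ of $\Gamma$ lie in $K_n$ (this is forced by $F/K_n\cong\Gamma/\Theta_n\cong\mathcal{G}_n$). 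The new relations in $\tilde Q_n$ therefore do not come from relators ``missing from $K_n$'' but from the fact that elements of $N\cap F'$, where $N=\ker(F\to\Gamma)$, become trivial in $\Gamma'$. Modulo $[N,F]\leq[K_n,F]$ the subgroup $N\cap F'$ is generated by $L=b^2c^2d^2(bcd)^{-2}$ and $U_0$ (these correspond to a basis of the relation lattice among the images of the six relators in $F/F'$), and their images in $Q_n$ are $[b,c]$ and $[a,d]^2$. So the kernel of $Q_n\to\tilde Q_n$ is $\gp{[b,c],[a,d]^2}$: you correctly found $[a,d]^2\equiv1$ but missed $[b,c]\equiv1$. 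The omission happens to be harmless, since in the direct-product presentation of $Q_n$ the elements $[a,c]^4$ and $[a,c,d]^2$ sit in the cyclic summands generated by $[a,c]$ and $[a,c,d]$, which are untouched by killing $[b,c]$ and $[a,d]^2$; but as written your accounting of the extra relations is incomplete and incorrectly justified, so the ``final bookkeeping'' you flag as the main obstacle is not actually closed.

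There is a cleaner repair that bypasses $\tilde Q_n$ entirely. Since $N\leq K_n$ and $[N,F]\leq[K_n,F]$, one has $\Theta_n/[\Theta_n,\Gamma]\cong K_n/(N[K_n,F])$, and in the decomposition $K_n/[K_n,F]\cong C^4\times M_n$ of Lemma \ref{basit} the subgroup $N[K_n,F]/[K_n,F]$ is exactly $\gp{B_1,B_2,B_3,B_4}\times\gp{L,U_0}$ (write $d^2$ and $(ad)^4$ in terms of $B_1,\dots,B_4,L,U_0$). Hence $\Theta_n/[\Theta_n,\Gamma]\cong M_n/\gp{L,U_0}$, and the independence of $U_1,V_0$ there is an immediate consequence of the independence of $L,U_0,U_1,V_0$ in $M_n$, i.e.\ of Lemma \ref{geyk} itself.
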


\begin{proof}
 The proof is analogous to the proof of lemma (\ref{geyk}) and is omitted.
\end{proof}

\begin{lem}
For $n \geq 4$ we have 

$$ \Theta_n /([\Theta_n,\Gamma]) \cong C_2^2 \times \Theta_{n-1} /([\Theta_{n-1},\Gamma]) $$

Where $U_1,V_0$ are generators of the factor $C_2^2$ and the images of elements 

$U_2,\ldots,U_{n-3},V_1,\ldots,V_{n-4},W_n,T_n$ are generators of the second factor. 
\label{yirmi}
\end{lem}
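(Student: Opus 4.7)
The plan is to adapt the argument behind Lemma~\ref{ebe} to the quotient $\Theta_n/[\Theta_n,\Gamma]$, with $\Gamma$ and $\bar\varphi$ playing the roles formerly played by $F$ and $\psi$. Concretely, I would study the homomorphism
$$\bar\varphi_* : \Theta_n/[\Theta_n,\Gamma] \longrightarrow \bar\varphi(\Theta_n)/\bar\varphi([\Theta_n,\Gamma])$$
induced by $\bar\varphi$, and split the analysis into computing its image quotient and computing its kernel.

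For the image, Lemma~5 gives $\bar\varphi(\Theta_n)=\Theta_{n-1}\times \Theta_{n-1}$ whenever $n\geq 4$, and a direct commutator computation parallel to Lemma~\ref{yazmadi} yields
$$\bar\varphi([\Theta_n,\Gamma]) = \bigl([\Theta_{n-1},\Gamma]\times [\Theta_{n-1},\Gamma]\bigr)\,\Psi,$$
where the diagonal $\Psi$ appears because $a$ swaps the two coordinates of $\bar\varphi$, forcing each $\bar\varphi([\theta,a])$ to land in $\{(w^{-1},w):w\in\Theta_{n-1}\}$. Combining these with the identification from the lemma immediately preceding Lemma~\ref{kucuk}, the image quotient is isomorphic to $\Theta_{n-1}/[\Theta_{n-1},\Gamma]$, and by Lemma~\ref{parcala} the classes of $U_i,V_i,W_n,T_n$ are sent to those of $U_{i-1},V_{i-1},W_{n-1},T_{n-1}$.

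For the kernel, note that $\varphi$ is faithful on $St_\mathcal{G}(1)$, so $\ker\bar\varphi\subseteq\Omega\subseteq\Theta_n$, and pushing Lemma~\ref{kern} from $F$ down to $\Gamma$ (where the relators $B_1,\dots,B_4,L,U_0$ all become trivial) shows that $\ker\bar\varphi$ is normally generated in $\Gamma$ by $u_1$ and $v_0$. Modulo $[\Theta_n,\Gamma]$ every conjugate collapses, so $\ker\bar\varphi_*$ is the abelian subgroup generated by the images $U_1$ and $V_0$, which by Lemma~\ref{eyi} is exactly $C_2\times C_2$. The resulting short exact sequence
$$1 \longrightarrow C_2^2 \longrightarrow \Theta_n/[\Theta_n,\Gamma] \longrightarrow \Theta_{n-1}/[\Theta_{n-1},\Gamma] \longrightarrow 1$$
splits since the generators $U_{i-1},V_{i-1},W_{n-1},T_{n-1}$ of the quotient lift back to $U_i,V_i,W_n,T_n$ in $\Theta_n/[\Theta_n,\Gamma]$ via $\sigma$, and this assignment descends to a well-defined section on the abelianized target.

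The main obstacle will be the precise identification $\bar\varphi([\Theta_n,\Gamma])=([\Theta_{n-1},\Gamma]\times[\Theta_{n-1},\Gamma])\Psi$. The inclusion $\supseteq$ follows from the swap computation sketched above, but the reverse inclusion requires writing an arbitrary $\gamma\in\Gamma$ as $a^{\varepsilon}\xi$ with $\xi\in\Xi$, expanding $[\theta,\gamma]$ commutator-bilinearly, and checking that the cross terms really do land in the stated product. Once this piece of bookkeeping is in place, Lemma~\ref{eyi} supplies the linear independence needed to conclude that the kernel is genuinely $C_2^2$ and not smaller, and the direct product decomposition follows immediately.
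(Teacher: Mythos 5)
Your proposal is correct and is essentially the paper's own argument: the published proof of this lemma is literally ``Similar to the proof of Lemma~\ref{ebe} using Lemma~\ref{eyi}'', i.e.\ study the map induced by $\bar\varphi$ on $\Theta_n/[\Theta_n,\Gamma]$, identify its kernel as $\gp{U_1,V_0}\cong C_2^2$ (the kernel of $\bar\varphi$ is normally generated by $u_1,v_0$, and independence is Lemma~\ref{eyi}), and identify its image with $\Theta_{n-1}/[\Theta_{n-1},\Gamma]$ via the $\Gamma$-analogues of Lemmas~\ref{yazmadi} and~\ref{parcala}. The one simplification worth noting is that the splitting does not need your $\sigma$-built section, whose well-definedness would itself require checking: every displayed generator of $\Theta_n/[\Theta_n,\Gamma]$ squares into $[\Theta_n,\Gamma]$ (push the relations $U_i^2\ap V_i^2\ap W_n^2\ap T_n^2\ap 1$ from $K_n/[K_n,F]$ down to $\Gamma$), so $\Theta_n/[\Theta_n,\Gamma]$ is an $\field{F}_2$-vector space and the subgroup $\gp{U_1,V_0}$ is automatically a direct summand.
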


\begin{proof}
 Similar to the proof of lemma (\ref{ebe}) using lemma (\ref{eyi}).
\end{proof}

\textbf{Proof of Theorem 2.}
 We need to show $n\geq3$, we have $$H_2(\mathcal{G}_n,\field{Z}) \cong (C_2)^{2n-2} $$

 We claim that for $n\geq3$ 
$$\Theta_n /([\Theta_n,\Gamma]) \cong C_2^{2n-4} $$

The case $n=3$  follows from lemma (\ref{kucuk}). Assume it holds for $n>3$. Then by lemma (\ref{yirmi})

$$ \Theta_{n+1} /([\Theta_{n+1},\Gamma]) \cong C_2^2 \times \Theta_{n} /([\Theta_{n},\Gamma]) $$ 

and the claim follows from the induction hypothesis. Hence 

$$K_n/([K_n,F]) \cong C^4 \times C_2^{2n-2} $$

and the result follows from lemma (\ref{basit}) and Hopf's formula.\hfill

\hfill

\textbf{Proof of theorem 3.} Let $\bar K_n = K_n /[K_n,F]$ and 
$ \acute K = \bar K_n / \gp{B_1^2,B_2^2,B_3^2,B_4^2}$. We have the following homomorphism:

$$
\begin{array}{ccc}
\bar K_n & \longrightarrow & \acute K_n \\
a^2 & \mapsto & B_1 \\
b^2 & \mapsto & B_2 \\
c^2 & \mapsto & B_3 \\
d^2 & \mapsto & B_3B_2L \\
u_i & \mapsto & U_i \\
v_i & \mapsto & V_i \\
w_n & \mapsto & W_n \\
t_n & \mapsto & T_n
\end{array}
$$

Hence any dependence among the initial relators will produce dependence among generators of 
$\acute K_n$.

\subsection{Computation of the cohomology group  $H^2(\mathcal{\hat G}, \field{F}_2)$ }

This subsection is devoted to the proof of theorem 5.\hfill

\hfill

It is well known (see for example \cite{MR1200015}) that for a finite abelian group $A$ one has

$$H^2(G,A)\cong (G/G' \otimes A ) \times (M(G) \otimes A)$$

Using this we have :

\begin{lem}
  For $n\geq 3$   $H^2(\mathcal{G}_n,\field{F}_2) \cong C_2^{2n+1}$.
\label{koh}
\end{lem}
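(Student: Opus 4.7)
The plan is to apply the formula displayed just before the lemma, namely
\[
H^2(G,A)\cong (G/G' \otimes A)\times (M(G)\otimes A),
\]
with $G=\mathcal{G}_n$ and $A=\field{F}_2$. Since both factors are finite elementary abelian $2$-groups in our situation, tensoring with $\field{F}_2$ just preserves the underlying $\field{F}_2$-vector space, so the computation reduces to reading off the $\field{F}_2$-dimensions of $\mathcal{G}_n^{ab}$ and of $M(\mathcal{G}_n)$.

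The abelianization is already in hand: in the proof of Theorem 4 it was observed that $\mathcal{G}_n^{ab}\cong (C_2)^3$ (it maps onto $\mathcal{G}_3^{ab}\cong (C_2)^3$ and is a quotient of $\mathcal{G}^{ab}\cong (C_2)^3$). Hence $\mathcal{G}_n^{ab}\otimes \field{F}_2 \cong (C_2)^3$. The Schur multiplier is the content of Theorem 2, which gives $M(\mathcal{G}_n)\cong (C_2)^{2n-2}$, so $M(\mathcal{G}_n)\otimes \field{F}_2 \cong (C_2)^{2n-2}$.

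Putting the two pieces together,
\[
H^2(\mathcal{G}_n,\field{F}_2)\cong (C_2)^3 \times (C_2)^{2n-2}\cong (C_2)^{2n+1},
\]
which is exactly the assertion. There is no real obstacle at this step: all the genuinely hard work—finding the presentations of $\mathcal{G}_n$ (Theorem 1) and using them to compute $M(\mathcal{G}_n)$ (Theorem 2)—has already been done. The lemma is simply the packaging of those results into the $\field{F}_2$-cohomology that will be needed in the proof of Theorem 5, where the uniform lower bound $\dim_{\field{F}_2} H^2(\mathcal{G}_n,\field{F}_2)=2n+1\to\infty$ will feed into the branch-structure argument to conclude that $H^2(\mathcal{\hat G},\field{F}_2)$ is infinite dimensional.
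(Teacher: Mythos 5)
Your proof is correct and is essentially identical to the paper's: both apply the formula $H^2(G,A)\cong (G/G'\otimes A)\times (M(G)\otimes A)$ with $\mathcal{G}_n^{ab}\cong (C_2)^3$ and $M(\mathcal{G}_n)\cong (C_2)^{2n-2}$ from Theorem 2 to get $3+(2n-2)=2n+1$. Nothing further is needed.
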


\begin{proof}
   As mentioned before we have $\mathcal{G}_n / \mathcal{G}_n' \cong (C_2)^3$.
Since $M(\mathcal{G}_n)\cong C_2^{2n-2}$ and $C_2 \otimes C_2 \cong C_2$ it follows that 
$$H^2(\mathcal{G}_n,\field{F}_2) \cong C_2^{2n+1} $$
\end{proof}
\begin{lem}
 For  natural numbers $n,k$ with $ n \geq 3$ let $$q_{n,k} :\mathcal{G}_{n+k} \rightarrow \mathcal{G}_n $$

be the canonical quotient map. Then there is $N\in \field{N}$ such that for all $n,k$
the dimension of the kernel of the induced map
$$q_{n,k}* :H^2(\mathcal{G}_n,\field{F}_2) \rightarrow  H^2(\mathcal{G}_{n+k},\field{F}_2) $$

is bounded above by $N$.
\label{bound}
\end{lem}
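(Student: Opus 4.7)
The plan is to feed the extension
$$1 \longrightarrow L \longrightarrow \mathcal{G}_{n+k} \longrightarrow \mathcal{G}_n \longrightarrow 1, \qquad L := \ker(q_{n,k}),$$
into the five-term inflation--restriction sequence in $\field{F}_2$-cohomology,
$$\cdots \longrightarrow H^1(L,\field{F}_2)^{\mathcal{G}_n} \longrightarrow H^2(\mathcal{G}_n,\field{F}_2) \xrightarrow{q_{n,k}^*} H^2(\mathcal{G}_{n+k},\field{F}_2),$$
which immediately gives $\dim_{\field{F}_2} \ker q_{n,k}^* \leq \dim_{\field{F}_2} H^1(L,\field{F}_2)^{\mathcal{G}_n}$. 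The task then reduces to bounding this last dimension uniformly in $n$ and $k$, which I will do by showing that $L$ admits a set of at most two normal generators in $\mathcal{G}_{n+k}$.

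The existence of such generators is a bookkeeping consequence of Theorem 1. Write $F = F(a,b,c,d)$ and let $R_n,\,R_{n+k}$ be the sets of relators supplied by Theorem 1. Then
$$R_n \cap R_{n+k} = \{a^2,b^2,c^2,d^2,\, bcd,\, u_0,\ldots,u_{n-3},\, v_0,\ldots,v_{n-4}\},$$
so that $R_n = (R_n\cap R_{n+k}) \cup \{w_n, t_n\}$. Since $\gpn{R_n\cap R_{n+k}} \subseteq \gpn{R_{n+k}}$, the image in $\mathcal{G}_{n+k} = F/\gpn{R_{n+k}}$ of $\gpn{R_n}$, which is exactly $L$, is normally generated by (the images of) the two elements $w_n$ and $t_n$.

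The rest is formal. The conjugation action of $\mathcal{G}_{n+k}$ on $L$ descends to a $\field{Z}[\mathcal{G}_n]$-module structure on $L^{ab}$ (since $L$ acts trivially on itself by inner automorphisms at the level of the abelianisation), and two normal generators of $L$ in $\mathcal{G}_{n+k}$ provide two generators of $L^{ab}$ as a $\field{Z}[\mathcal{G}_n]$-module. Therefore the coinvariants $(L^{ab}\otimes \field{F}_2)_{\mathcal{G}_n}$ have $\field{F}_2$-dimension at most $2$, and by duality
$$\dim_{\field{F}_2} H^1(L,\field{F}_2)^{\mathcal{G}_n} = \dim_{\field{F}_2} \mathrm{Hom}_{\mathcal{G}_n}(L^{ab},\field{F}_2) = \dim_{\field{F}_2} (L^{ab}\otimes\field{F}_2)_{\mathcal{G}_n} \leq 2,$$
so $N=2$ suffices.

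I do not anticipate any substantial obstacle. The five-term sequence and the dualisation between invariants on $H^1$ and coinvariants on the abelianisation are routine. All non-triviality sits in the small combinatorial observation that, going from the presentation of $\mathcal{G}_n$ to that of $\mathcal{G}_{n+k}$, all old $u_i$ and $v_i$ persist unchanged and only the two tail relators $w_n, t_n$ get replaced by the block $\{u_{n-2},\ldots,u_{n+k-3},v_{n-3},\ldots,v_{n+k-4},w_{n+k},t_{n+k}\}$; this is immediate from the definitions of the $u_i, v_i$ via the substitution $\sigma$, and is precisely what makes the uniform bound possible despite $\dim H^2(\mathcal{G}_n,\field{F}_2)$ itself growing linearly in $n$.
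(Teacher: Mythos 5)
Your proof is correct, and after the shared opening move it takes a genuinely different route from the paper's. Both arguments start from the same five-term inflation--restriction sequence for $1\to \ker q_{n,k}\to \mathcal{G}_{n+k}\to\mathcal{G}_n\to 1$, reducing the lemma to a uniform bound on $\dim H^1(\ker q_{n,k},\field{F}_2)^{\mathcal{G}_n}$ (the paper additionally notes that, since $\mathcal{G}_n$ and $\mathcal{G}_{n+k}$ have the same abelianization, the transgression is injective and the inequality is an equality, but only the inequality is needed). For the bound itself the paper does not use Theorem 1 at all: it invokes the branch structure, writing $\ker q_{n,k}\cong St_\mathcal{G}(n)/St_\mathcal{G}(n+k)$ as a direct product of $2^{n-3}$ copies of $St_\mathcal{G}(3)/St_\mathcal{G}(k+3)$ permuted transitively by $\mathcal{G}_n$, so that an invariant homomorphism is determined by its restriction to a single factor, giving $N=d(St_\mathcal{G}(3))$. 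You instead read off from Theorem 1 that the presentations of $\mathcal{G}_n$ and $\mathcal{G}_{n+k}$ share every relator except the two tail relators $w_n,t_n$, so $\ker q_{n,k}$ is normally generated by two elements, its abelianization is a $2$-generated $\field{Z}[\mathcal{G}_n]$-module, and the invariants--coinvariants duality gives the explicit value $N=2$; each of these steps checks out. Your version trades the geometric input ($St_\mathcal{G}(3)\leq K$ and regular branching) for the combinatorial structure of the L-presentation, produces a sharper and explicit constant, and generalizes to any tower of quotients whose presentations differ by a uniformly bounded number of relators, whereas the paper's argument needs only finite generation of $St_\mathcal{G}(3)$ and the branch decomposition rather than the full presentations.
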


\begin{proof}

We have an exact sequence 

\begin{equation}
1 \rightarrow Ker(q_{n,k}) \rightarrow \mathcal{G}_{n+k} \rightarrow \mathcal{G}_n \rightarrow 1 
\label{ses}
\end{equation}

and clearly $Ker(q_{n,k})\cong St_\mathcal{G}(n)/St_\mathcal{G}(n+k)$.\hfill

\hfill

The sequence (\ref{ses}) induces the five term exact sequence (See \cite{MR1269324})
$$
\begin{array}{cccllll}
0 & \rightarrow & Hom(\mathcal{G}_n,\field{F}_2)& \xrightarrow{\alpha} & Hom(\mathcal{G}_{n+k},\field{F}_2) & \xrightarrow{\beta}

 & Hom(Ker(q_{n,k}),\field{F}_2)^{\mathcal{G}_n}  \\

& & &\xrightarrow{\partial} & H^2(\mathcal{G}_n,\field{F}_2) & \xrightarrow{q_{n,k}*} & H^2(\mathcal{G}_{n+k},\field{F}_2)
\end{array}
$$

where $Hom(Ker(q_{n,k}),\field{F}_2)^{\mathcal{G}_n}$ is the set of all homomorphisms invariant under the action of $\mathcal{G}_n$
on $Ker(q_{n,k})$ by conjugation.\hfill

\hfill

Since $\mathcal{G}_n$ and $\mathcal{G}_{n+k}$ have the same abelianization $\alpha$ is an isomorphism. Therefore $\beta$ is the zero map and 
hence $\partial$ is an injection.\hfill

\hfill

As noted earlier for $n\geq 4$ we have  $St_\mathcal{G}(n) \cong St_\mathcal{G}(n-1) \times St_\mathcal{G}(n-1) $.

Therefore we have $$Ker(q_{n,k})\cong \frac{St_\mathcal{G}(n)}{St_\mathcal{G}(n+k)}\cong \frac{St_\mathcal{G}(3)}{St_\mathcal{G}(k+3)} \times
\dots \times \frac{St_\mathcal{G}(3)}{St_\mathcal{G}(k+3)} $$

Since $\mathcal{G}_n$ acts transitively on these factors any homomorphism in
 $Hom(Ker(q_{n,k}),\field{F}_2)^{\mathcal{G}_n}$ 
is uniquely determined by its values in the first factor of this decomposition. But $St_\mathcal{G}(3)$ is finitely 
generated  hence the dimension of $Hom(Ker(q_{n,k}),\field{F}_2)^{\mathcal{G}_n}$ is no more than a fixed number and in 
particular independent of $k$ and $n$.
\end{proof}

\begin{lem}
 Suppose $\{G_i , \varphi_{ij} \mid i,j \in I\}$ is a direct system of  finitely generated elementary abelian p-groups.
 Suppose also  that the sequence  $dim(G_i)$ is monotone increasing and there is a uniform bound $N$ such that 
$\dim(Ker(\varphi_{ij}))\leq N$. Then the direct limit $\underrightarrow{\lim}\; G_i$ is infinite and hence 
isomorphic to $(C_p)^{\infty}$.
\label{sonradan}
\end{lem}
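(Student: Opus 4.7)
The plan is to show that the canonical map $\varphi_i : G_i \to \varinjlim_j G_j$ has kernel of dimension at most $N$ for every $i$, from which infinitude of the colimit follows immediately.

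First I would recall the standard description of the colimit: an element $x \in G_i$ maps to zero in $\varinjlim G_j$ if and only if $\varphi_{ij}(x) = 0$ for some $j \geq i$. Hence $\ker(\varphi_i) = \bigcup_{j \geq i} \ker(\varphi_{ij})$. Next I would verify that this union is actually an ascending chain of subspaces: if $j \leq j'$ then $\varphi_{ij'} = \varphi_{jj'}\circ \varphi_{ij}$, so $\ker(\varphi_{ij}) \subseteq \ker(\varphi_{ij'})$. So $\ker(\varphi_i)$ is an ascending union of $\field{F}_p$-subspaces of $G_i$, each of dimension at most $N$ by hypothesis.

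An ascending chain of subspaces of uniformly bounded dimension inside a finite-dimensional vector space must stabilize, so $\ker(\varphi_i)$ itself is a subspace of $G_i$ of dimension at most $N$. Consequently
\[
\dim_{\field{F}_p} \varphi_i(G_i) \;\geq\; \dim_{\field{F}_p} G_i - N,
\]
and since $\varphi_i(G_i)$ embeds in $\varinjlim G_j$, the colimit contains $\field{F}_p$-subspaces of arbitrarily large dimension as $i$ ranges over $I$ (using that $\dim G_i$ is monotone increasing in $i$ and therefore unbounded). Being a direct limit of elementary abelian $p$-groups, $\varinjlim G_j$ is itself elementary abelian (the relation $px=0$ passes to the limit), and an infinite elementary abelian $p$-group is isomorphic to $(C_p)^{\infty}$, which gives the conclusion.

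There is no serious obstacle here; the only point that requires care is the identification $\ker(\varphi_i) = \bigcup_j \ker(\varphi_{ij})$ together with the observation that this union is a nested chain, which is what converts the pointwise bound on $\dim \ker(\varphi_{ij})$ into a bound on $\dim \ker(\varphi_i)$. Once this is in place the dimension count is immediate. This lemma is exactly what is needed to combine with Lemmas on the bounded kernel of $q_{n,k}^*$ and on $\dim H^2(\mathcal{G}_n,\field{F}_2) = 2n+1$ in order to conclude Theorem 5.
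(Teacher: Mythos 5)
Your proof is correct and rests on the same idea as the paper's: since each transition kernel has dimension at most $N$, the image of $G_i$ in the direct limit has dimension at least $\dim G_i - N$, which is unbounded. The paper phrases this as a contradiction argument (assume the limit has $M$ elements and pick $i$ with $\dim G_i > NM$), whereas you argue directly by identifying the kernel of $G_i \to \underrightarrow{\lim}\; G_j$ with the stabilizing ascending union $\bigcup_{j \geq i} \ker(\varphi_{ij})$; the difference is purely presentational, and your version is in fact the more carefully written of the two.
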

\begin{proof}
 Recall that the direct limit can be defined as the disjoint union $ \bigsqcup G_i$ factored by
the equivalence relation:
$$g_i \sim g_j \iff \exists k \geq i,j \text{  such that  } \varphi_{ik}(g_i)=\varphi_{jk}(g_j) $$
Suppose that $\underrightarrow{\lim}\; G_i$ has $M$ elements. Select $i$ large enough so that 
$dim(G_i) > NM$. So for large $j$ we have 

$$| G_i : Ker(\varphi_{ij}) | \geq \frac{dim(G_i)}{N} > \frac{NM}{N} =M $$

which shows that the direct limit has more than  $M$ elements.

\end{proof}

\textbf{Proof of theorem 5:} The following is well known: (See \cite{MR1691054} page 178) If $G$ is the inverse limit of the inverse system
$\{G_i , \phi_{ij}\}$ then 

$$H^n(G,\field{F}_p)\cong \underrightarrow{\lim} \: H^n(G_i,\field{F}_p) $$

i.e. $H^n(G,\field{F}_p)$ is the direct limit of the direct system $\{H^n(G_i,\field{F}_p),\phi_
{ij}^*\}$ where $\phi_{ij}^*$ is the inflation map induced by $\phi_{ij}$. Hence
$$H^2( \mathcal{\hat G}, \field{F}_2)\cong\lim_{\longrightarrow} H^2(\mathcal{G}_n,\field{F}_2) $$

Now lemmas  (\ref{koh}) and (\ref{bound}) show that the hypotheses of lemma (\ref{sonradan}) are satisfied
and therefore $H^2( \mathcal{\hat G}, \field{F}_2)\cong (C_2)^{\infty}$.\\

\textbf{Acknowledgements}\\

I would like to thank my advisors R. Grigorchuk and V. Nekrashevych for  valuable comments and helpful discussions. 

\bibliography{fquotient2.bib}
\end{document}